\newtheorem{theorem}{Theorem}[section]
\newtheorem{corollary}[theorem]{Corollary}
\newtheorem{lemma}[theorem]{Lemma}
\theoremstyle{definition}
\newtheorem{definition}[theorem]{Definition}
\newtheorem{claim}{Claim}[]
\newcommand{\D}{\mathcal{D}}
\newcommand{\DV}{\mathcal{D}_\mathcal{V}}
\newcommand{\DW}{\mathcal{D}_\mathcal{W}}
\newcommand{\DVW}{\mathcal{D}_{\mathcal{VW}}}
\newenvironment{proofN}[1]{\noindent\textit{Proof of #1.}}{\hfill$\square$\\}
\newcommand{\Case}[1]{\textbf{Case #1.}}
\newcommand{\V}{\mathcal{V}}
\newcommand{\W}{\mathcal{W}}
\newcommand{\Thick}{\operatorname{Thick}}
\newcommand{\Thin}{\operatorname{Thin}}
\begin{document}

\title[]{Generalized Heegaard splittings and the disk complex}

\author{Jungsoo Kim}
\date{October 8, 2016}

\begin{abstract}
Let $M$ be an orientable, irreducible $3$-manifold and $(\V,\W;F)$ a weakly reducible, unstabilized Heegaard splitting of $M$ of genus at least three.
In this article, we define an equivalent relation $\sim$ on the set of the generalized Heegaard splittings obtained by weak reductions and find special subsets of the disk complex $\D(F)$ named by the ``\textit{equivalent clusters}'', where we can find a canonical function $\Phi$ from the set of equivalent clusters to the set of the equivalent classes for the relation $\sim$.
As an application, we prove that if $F$ is topologically minimal and the topological index of $F$ is at least three, then there is a $2$-simplex in $\D(F)$ formed by two weak reducing pairs such that the equivalent classes of the generalized Heegaard splittings obtained by weak reductions along the weak reducing pairs for the relation $\sim$ are different.
In the last section, we prove $\Phi$ is a bijection if the genus of $F$ is three.
Using it, we prove there is a canonical function $\Omega$ from the set of components of $\DVW(F)$ to the set of the isotopy classes of the generalized Heegaard splittings obtained by weak reductions and describe what $\Omega$ is. 
\end{abstract}

\address{\parbox{4in}{
	BK21 PLUS SNU Mathematical Sciences Division,\\ Seoul National University\\ 
	1 Gwanak-ro, Gwanak-Gu, Seoul 08826, 
}} 
	
\email{pibonazi@gmail.com}
\subjclass[2000]{57M50}

\maketitle
\section{Introduction and Result}
Let $M$ be an orientable, irreducible $3$-manifold and $(\V,\W;F)$ an unstabilized Heegaard splitting of $M$ whose genus is at least three.

In this article, we will define an equivalent relation $\sim$ on the set of the generalized Heegaard splittings obtained by weak reductions, where two generalized Heegaard splittings $
\mathbf{H}_1$ and $\mathbf{H}_2$ obtained by weak reductions are \textit{equivalent} or $\mathbf{H}_1\sim \mathbf{H}_2$ if the following hold:
\begin{enumerate}
\item $
\mathbf{H}_1$ is isotopic to $\mathbf{H}_2$ in $M$ as two sets of surfaces,
\item $\Thick(\mathbf{H}_1)\cap\V$ is isotopic to $\Thick(\mathbf{H}_2)\cap\V$  in $\V$, 
\item $\Thick(\mathbf{H}_1)\cap\W$ is isotopic to $\Thick(\mathbf{H}_2)\cap\W$ in $\W$, and 
\item $\Thin(\mathbf{H}_1)\cap\textrm{int}(M)$ is isotopic to $\Thin(\mathbf{H}_2)\cap\textrm{int}(M)$ in $M$,
\end{enumerate} where $\Thick(\mathbf{H}_i)$ and $\Thin(\mathbf{H}_i)$ are the thick and thin levels of $\mathbf{H}_i$ for $i=1,2$, respectively.
Then we will prove the following theorem.

\begin{theorem}\label{theorem-main}
Suppose $M$ is an orientable, irreducible $3$-manifold and $(\V,\W;F)$ is a weakly reducible, unstabilized Heegaard splitting of $M$ of genus at least three.
Then there are special subsets of $\D(F)$ named by the ``\textit{equivalent clusters}''  such that there is a canonical function $\Phi$ from the set of the equivalent clusters to the set of the equivalent classes for the relation $\sim$.
The equivalent clusters satisfy the following:
\begin{enumerate}
\item Each equivalent cluster is connected.
\item For a given equivalent cluster, the generalized Heegaard splittings obtained by weak reductions along the weak reducing pairs in the cluster are all equivalent.
Moreover, the equivalent clusters are classified into ten types (Definition \ref{definition-3-3-detail}, Definition \ref{definition-3-5-detail}, Definition \ref{definition-3-5-ii-detail} and Definition \ref{definition-3-6}).

\item If there is a simplex of $\D(F)$ such that the generalized Heegaard splittings obtained by weak reductions along the weak reducing pairs in the simplex are all equivalent, then this simplex belongs to a uniquely determined equivalent cluster (Theorem \ref{theorem-DVWF}).
\end{enumerate}
\end{theorem}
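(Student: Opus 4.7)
The plan is to analyze, for each weak reducing pair $\{V,W\}$ with $V\in \V$ and $W\in \W$ disjoint, exactly which combinatorial data of $(V,W)$ determine the equivalence class under $\sim$ of the resulting generalized Heegaard splitting. The key observation is that $\Thick(\mathbf{H})\cap\V$ depends only on the isotopy class of $V$ in $\V$ together with whether $V$ separates $\V$, and symmetrically on the $\W$-side; meanwhile $\Thin(\mathbf{H})\cap\textrm{int}(M)$ depends on the joint configuration of $V$ and $W$, specifically on how the two disks together cut up the handlebodies. Reading off these dependencies yields a natural partition of the set of weak reducing pairs into candidate equivalent clusters, and $\Phi$ is then defined as the map sending each cluster to the common equivalence class of all generalized splittings arising from pairs in the cluster.

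For the ten-type classification stated in item (2), I would stratify weak reducing pairs by: (a) whether $V$ separates $\V$, (b) whether $W$ separates $\W$, (c) whether $V$ is isotopic in $F$ to $W$ after capping off, and (d) how the joint compression decomposes $M$ into pieces. A careful case analysis, using the hypothesis that the genus of $F$ is at least three to rule out degenerate configurations, should produce exactly ten mutually exclusive configurations, to be recorded in Definitions \ref{definition-3-3-detail}, \ref{definition-3-5-detail}, \ref{definition-3-5-ii-detail} and \ref{definition-3-6}. Each type specifies explicitly which disks of $\D(F)$ belong to the corresponding cluster.

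Connectedness in item (1) would follow from the observation that within a fixed type, any two weak reducing pairs are related by a sequence of single-disk swaps in which the swapped disks are disjoint and play the same topological role in their handlebody; each such swap is an edge in $\D(F)$. The same argument shows $\Phi$ is well-defined on each cluster, since an edge swap replaces one disk by a parallel or band-equivalent disk and hence produces isotopic thick and thin levels, matching all four conditions of $\sim$ simultaneously. Item (3) then follows because the combinatorial invariants defining the ten types can be read off from any single vertex of a simplex: a simplex all of whose weak reducing pair vertices produce equivalent generalized splittings must have all vertices assigned to the same type, hence lie inside a single cluster, and the forthcoming Theorem \ref{theorem-DVWF} should provide the formal uniqueness statement.

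The main obstacle I anticipate is the exhaustiveness of the ten-type classification: one must verify that no two combinatorially distinct weak reducing pairs accidentally produce equivalent generalized splittings, which requires tracking the isotopy classes of three surfaces ($\Thick\cap\V$, $\Thick\cap\W$, and $\Thin\cap\textrm{int}(M)$) simultaneously under all possible disk replacements, and showing that if two pairs are not joined in $\D(F)$ by a path of allowed swaps then at least one of the three isotopy classes must change. Handling the interplay between $\V$-separating and $\W$-separating cases, and the borderline configurations in which $V\cup W$ cobounds an additional product region with part of $F$, is the core technical difficulty; without the genus-at-least-three hypothesis several of these types would collapse and the classification would fail.
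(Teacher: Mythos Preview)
Your proposal has a genuine gap at its core: you assume that within a fixed combinatorial type, any single-disk swap along an edge of $\D(F)$ preserves the equivalence class, because ``an edge swap replaces one disk by a parallel or band-equivalent disk and hence produces isotopic thick and thin levels.'' This is false in general, and discovering exactly when it holds is the technical heart of the paper. The key result you are missing is Lemma~\ref{lemma-equivalent}: for a $\V$-face $\{V_1,V_2,W\}$, the two GHSs are equivalent \emph{if and only if} one $\V$-disk cuts off a solid torus from $\V$ and the other is a meridian of that solid torus. Thus most $\V$-faces do \emph{not} have one equivalent class, and the clusters are not the level sets of your stratification by type. In fact the paper's ten types are finer than your (a)--(d) (they distinguish whether a separating disk cuts off a solid torus, a higher-genus handlebody, or a compression body with nonempty $\partial_-$), and your condition (c) plays no role. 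For five of the ten types the equivalent cluster is just a single weak reducing pair; for four types it is a $\V$- or $\W$-facial cluster (a star of faces sharing a common center pair); and only for type (a)-(i) is it the larger union you seem to envision. Connectedness is then immediate from these explicit shapes, not from a swap argument.

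Your anticipated ``main obstacle'' is also misdirected: the theorem does not assert that $\Phi$ is injective, so you do not need to show that combinatorially distinct pairs give inequivalent GHSs. What you do need for item~(3) is the uniqueness in Lemma~\ref{lemma-BB-char} (every weak reducing pair lies in exactly one cluster) together with the dimension bound of Lemma~\ref{lemma-restrict} (an equivalent simplex has dimension at most three, with a prescribed shape in dimension three), both of which flow from Lemma~\ref{lemma-equivalent}. Without that lemma, none of the structure assembles.
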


In \cite{Bachman2010}, Bachman introduced the \textit{topological index theory} for surfaces in $3$-manifolds, where it is the generalization of the \textit{critical surface theory} in his former articles \cite{Bachman2002} and \cite{Bachman2008}.
In this article, he defined a \textit{topologically minimal surface} and the \textit{topological index} of a tologically minimal surface is defined as the homotopy index of $\D(F)$.
A topologically minimal surface intersects an incompressible surface in essential curves on both surfaces up to isotopy in an irreducible $3$-manifold [\cite{Bachman2010}, Corollary 3.8].
Moreover, if there is a topologically minimal Heegaard splitting in a $3$-manifold, then the boundary of the $3$-manifold is incompressible [\cite{Bachman2010}, Corollary 4.4].
As well as the topological minimality itself, the topological index was turned out to be very useful in his subsequent works giving a resemblance between  topologically minimal surfaces and  geometrically minimal surfaces (see  \cite{Bachman2012-1}, \cite{Bachman2012-2} and \cite{Bachman2013-1}).
For a topologically minimal surface $F$, if $F$ is incompressible,  strongly irreducible, or weakly reducible, then its topological index  is zero, one, or at least two, respectively.
The question is what would be a distinctive property of surfaces of topological index at least three?
As an answer for this question, we will prove the following theorem giving a sufficient  condition that $F$ cannot be of topological index at least three.

\begin{theorem}[the contraposition of Theorem \ref{theorem-not-minimal-critical}]\label{theorem-main-a}
Suppose $M$ is an orientable, irreducible $3$-manifold and $(\V,\W;F)$ is an unstabilized Heegaard splitting of $M$ of genus at least three.
If $F$ is topologically minimal and the topological index of $F$ is at least three, then there is a $2$-simplex in $\D(F)$ formed by two weak reducing pairs such that the equivalent classes of the generalized Heegaard splittings obtained by weak reductions along the weak reducing pairs for the relation $\sim$ are  different.
\end{theorem}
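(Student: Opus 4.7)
The plan is to prove the contrapositive, Theorem~\ref{theorem-not-minimal-critical}: assume that for every $2$-simplex of $\D(F)$ with two weak reducing pair edges, the generalized Heegaard splittings obtained by weak reductions along those two pairs are equivalent under $\sim$, and deduce that $F$ is either not topologically minimal or has topological index at most two.

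First I would unpack the hypothesis via Theorem~\ref{theorem-main}(3). Any $2$-simplex two of whose edges are weak reducing pairs yielding equivalent GHSs must belong to a single equivalent cluster. Iterating this observation along edge-paths in $\DVW(F)$ and combining with Theorem~\ref{theorem-main}(1) (connectedness of each cluster) promotes the local statement to a global one: each connected component of $\DVW(F)$ coincides with a single equivalent cluster, and the canonical function $\Phi$ is then forced to be injective on the set of components of $\DVW(F)$.

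Next I would analyze the homotopy type of $\D(F)$ under this decomposition. The full disk complex splits naturally into three pieces: the contractible simplex spanned by the $\V$-disks, the contractible simplex spanned by the $\W$-disks, and the mixed subcomplex $\DVW(F)$. Using the ten explicit types of equivalent clusters (Definition~\ref{definition-3-3-detail}, Definition~\ref{definition-3-5-detail}, Definition~\ref{definition-3-5-ii-detail}, and Definition~\ref{definition-3-6}), each cluster admits a direct verification of contractibility, each type being either a single simplex, a simplicial join, or a starlike union with a distinguished common disk. Assembling the two pure-side simplices with these contractible clusters along their attaching faces yields a deformation retraction of $\D(F)$ onto a $1$-dimensional nerve. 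In particular, $\pi_n(\D(F))=0$ for every $n\geq 2$. Since topological index at least three requires $\pi_n(\D(F))\neq 0$ for some $n\geq 2$, this contradicts the hypothesis and finishes the contrapositive.

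The main obstacle is verifying contractibility for each of the ten cluster types simultaneously with controlling how distinct clusters meet along shared disk vertices. The ten-type classification makes the per-cluster check tractable, but one must guarantee that when two clusters share a common disk, their intersection is itself contractible, so that no $2$-sphere in $\D(F)$ escapes being bounded either by a single cluster or by a cone on a shared disk. This bookkeeping along attaching faces, essentially a nerve-lemma argument applied to the cover of $\D(F)$ by the two pure-side simplices together with the equivalent clusters, is the crux of the proof and is where the detailed structure of the ten types will be used most heavily.
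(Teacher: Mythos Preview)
Your approach is sound but takes a genuinely different route from the paper's. Both arguments begin by upgrading the hypothesis to the statement that each component of $\DVW(F)$ is a single equivalent cluster (this is exactly Lemma~\ref{lemma-equivalent-GHSs}, proved via the face-path connectivity of Lemma~8.4 of \cite{Bachman2008}). From there the paper splits into cases: when $\DVW(F)$ is disconnected it produces an explicit two-set partition of the compressing disks satisfying Bachman's criticality criterion, so $F$ has index exactly two; when $\DVW(F)$ is connected it quotes the contractibility argument of \cite{JungsooKim2014} verbatim. Your nerve-lemma strategy instead handles both cases uniformly: covering $\D(F)$ by $\DV(F)$, $\DW(F)$, and the clusters, with all nonempty pairwise intersections contractible by Lemma~\ref{lemma-3-9} and all triple intersections empty, gives a one-dimensional nerve and hence $\pi_n(\D(F))=0$ for $n\geq 2$. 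This is conceptually cleaner and avoids the case split, at the cost of invoking the nerve theorem for closed subcomplex covers rather than the elementary partition argument.

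Two corrections to your write-up. First, your stated ``main obstacle'' about distinct clusters sharing a disk evaporates once you have established that components of $\DVW(F)$ are clusters: distinct clusters are then distinct components and hence disjoint, so there is no cluster--cluster intersection to analyze. Second, the assertion that $\Phi$ becomes injective on components does not follow from the hypothesis and is not used anywhere in your argument; drop it. (Also, $\DV(F)$ and $\DW(F)$ are contractible \emph{subcomplexes}, not simplices.)
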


Note that Theorem \ref{theorem-not-minimal-critical} is the generalization of Theorem 1.1 of \cite{JungsooKim2014} (see Corollary \ref{lemma-equivalent-genus3}).

In the last section, we will consider the case where the genus of $F$ is three and prove that each component of $\DVW(F)$ is just an equivalent cluster and the canonical function $\Phi$ is bijective. 
Using it, we will prove there is a naturally induced function $\Omega$ from the set of components of $\DVW(F)$ to set of the isotopy classes of the generalized Heegaard splittings obtained by weak reductions and describe what $\Omega$ is, where $\DVW(F)$ is the union of all simplices of $\D(F)$ intersecting both $\DV(F)$ and $\DW(F)$.

\begin{theorem}[Theorem \ref{theorem-1-1-main}]\label{theorem-1-1}
Let $(\V,\W;F)$ be a weakly reducible, unstabilized Heegaard splitting of genus three  in an orientable, irreducible $3$-manifold $M$.
Then the domain of $\Phi$ is the set of components of $\DVW(F)$, $\Phi$ is bijective, and there is a canonically induced function $\Omega$ from the set of components of $\DVW(F)$ to the set of the isotopy classes of the generalized Heegaard splittings obtained by weak reductions from $(\V,\W;F)$.
The number of components of the preimage of an isotopy class of $\Omega$ is the number of ways to embed the thick level contained in $\V$ into $\V$ (or in $\W$ into $\W$).
This means if we consider a generalized Heegaard splitting $\mathbf{H}$ obtained by weak reduction from $(\V,\W;F)$, then the way to embed the thick level of $\mathbf{H}$ contained in $\V$ into $\V$ determines the way to embed the thick level of $\mathbf{H}$ contained in $\W$ into $\W$ up to isotopy and vise versa.
\end{theorem}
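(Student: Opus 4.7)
My plan is to leverage the general structural result Theorem \ref{theorem-main} and specialize it to genus three, where the low complexity of the possible thick and thin levels collapses the ten types of equivalent clusters into a situation in which each cluster fills out an entire component of $\DVW(F)$. The three assertions of Theorem \ref{theorem-1-1} (domain, bijectivity of $\Phi$, and description of $\Omega$) are then essentially consequences of this identification.

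First I would verify that the domain of $\Phi$ equals the set of components of $\DVW(F)$. By construction every equivalent cluster is contained in $\DVW(F)$ and is connected (Theorem \ref{theorem-main}(1)), so each cluster lies in a single component. Conversely, every simplex of $\DVW(F)$ has vertices on both sides of $F$, hence contains at least one weak reducing pair, which sits in some cluster; and since genus$(F)=3$ forces the compressed pieces on each side of $F$ to be of very low genus, one can check type by type (using Definitions \ref{definition-3-3-detail}, \ref{definition-3-5-detail}, \ref{definition-3-5-ii-detail}, \ref{definition-3-6}) that no 1-simplex of $\DVW(F)$ sits between two distinct clusters. Combined with Theorem \ref{theorem-main}(3), a path of simplices in $\DVW(F)$ stays in a single cluster, so components of $\DVW(F)$ and clusters coincide.

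Next I would establish that $\Phi$ is bijective. Surjectivity is automatic: any generalized Heegaard splitting obtained by weak reduction from $(\V,\W;F)$ arises from some weak reducing pair, which lies in some cluster, so its $\sim$-class is in the image of $\Phi$. For injectivity I would argue the contrapositive: if two clusters $C_1,C_2$ satisfy $\Phi(C_1)=\Phi(C_2)$, then by definition of $\sim$ the thick levels $\Thick(\mathbf{H}_i)\cap\V$ and the thin levels agree up to ambient isotopy. In genus three the thin level is either a torus or disconnected with tori components, and the single thick component in $\V$ has genus at most two; this rigidity, together with Corollary \ref{lemma-equivalent-genus3}, lets me reconstruct the weak reducing pair (up to the action of the cluster's combinatorics) from its equivalence class, forcing $C_1=C_2$.

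Finally, I would define $\Omega$ as the composition $\Omega=\pi\circ\Phi$, where $\pi$ sends an $\sim$-equivalence class to the underlying isotopy class of generalized Heegaard splittings in $M$ (forgetting the refinements by isotopy inside $\V$ and inside $\W$). Since $\Phi$ is bijective, the fiber of $\Omega$ over an isotopy class of a generalized Heegaard splitting $\mathbf{H}$ is in bijection with the set of $\sim$-refinements of the $M$-isotopy class of $\mathbf{H}$, i.e.\ with pairs of isotopy classes of $\Thick(\mathbf{H})\cap\V$ in $\V$ and $\Thick(\mathbf{H})\cap\W$ in $\W$ that are compatible with the same ambient isotopy class. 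The main obstacle is proving the rigidity claim that in genus three the isotopy class of the thick level in $\V$ uniquely determines the isotopy class in $\W$ and vice versa; this I would handle by a direct analysis of the low-genus handlebody $\V$ together with the constraint that the two thick levels must be glued along the thin level to reproduce the prescribed isotopy class in $M$. With this rigidity, the fiber count equals the number of embeddings of the thick level into $\V$, giving the last statement of Theorem \ref{theorem-1-1}.
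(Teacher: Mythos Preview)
Your overall architecture matches the paper's: first identify clusters with components of $\DVW(F)$, then prove $\Phi$ injective, then read off $\Omega$ and its fibers. However, two of your three steps are imprecise in ways that matter.

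For the domain step, your proposed check ``no $1$-simplex of $\DVW(F)$ sits between two distinct clusters'' is the wrong granularity. Distinct clusters \emph{can} share a vertex (Lemma~\ref{lemma-BB-intersect}), and the $1$-simplices that are weak reducing pairs are already known to lie in a unique cluster (Lemma~\ref{lemma-BB-char}); neither fact by itself prevents a component from containing several clusters. The paper's route is to observe that in genus three \emph{every $\V$- or $\W$-face has one equivalent class} (Corollary~\ref{lemma-equivalent-genus3}) and then apply Lemma~\ref{lemma-equivalent-GHSs}, whose proof runs a sequence-of-faces argument to force all clusters in a component to coincide. Your ``type by type'' check of Definitions~\ref{definition-3-3-detail}--\ref{definition-3-6} is unnecessary once you invoke Corollary~\ref{lemma-equivalent-genus3}.

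For injectivity of $\Phi$ and the fiber count of $\Omega$, your ``rigidity'' heuristic is correct in spirit but you have not isolated the mechanism. The paper encapsulates it in Lemma~\ref{lemma-3-11}: the point is that if a center disk $\bar V_i$ is separating then (by Lemma~\ref{lemma-center}) it cannot cut off a solid torus, and since $g(F)=3$ the piece it cuts off missing $\partial\bar W_i$ is forced to be $(\text{torus})\times I$; Lemma~\ref{lemma-3-11} then gives $\mathcal{B}_1=\mathcal{B}_2$. Similarly, the fiber analysis for $\Omega$ is not a vague ``direct analysis of the low-genus handlebody'' but a concrete application of Lemmas~\ref{lemma-2-19} and~\ref{lemma-2-20} to the genus-three compression body between $F$ and $\bar F_{V_1}$, which has a unique compressing disk (up to isotopy, in the relevant sense) and hence pins down $V_1$ from $\bar F_{V_1}$. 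Your sketch would become a proof once these specific lemmas replace the informal rigidity claims; note also that the thick level in $\V$ has genus exactly two, not ``at most two''.
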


This article is constructed as follows.
First, we will classify the generalized Heegaard splittings obtained by weak reductions into five types in Lemma \ref{lemma-five-GHSs} simply by the shapes of the two compression bodies intersecting the inner thin level, where the \textit{inner thin level} is the union of the components of the thin level contained in $\mathrm{int}(M)$.
Then we will find a neccessary and sufficient condition that all weak reducing pairs in a $2$-simplex in $\D(F)$ induce equivalent generalized Heegaard splittings after weak reductions in Lemma \ref{lemma-equivalent}.
Using this lemma, we will reclassify the generalized Heegaard splittings obtained by weak reductions into ten types in Definition \ref{definition-GHSs}.
After this reclassification, we will define the ``\textit{equivalent clusters}''.
Of course, a component of $\DVW(F)$ might not be an equivalent cluster in general (see Theorem \ref{theorem-DVWF}).
But, we will find a necesary and sufficient condition that each component of $\DVW(F)$ is an equivalent cluster in Lemma \ref{lemma-equivalent-GHSs}.
Using this idea, we will prove Theorem  \ref{theorem-not-minimal-critical}.
In the last section, we will find a sufficient condition that two equivalent clusters corresponding to the same equivalent class by $\Phi$ are the same in Lemma \ref{lemma-3-11}.
By applying Lemma \ref{lemma-3-11} to the genus three case, we will prove Theorem \ref{theorem-1-1-main}.

\section*{Acknowledgments}
The author is grateful to Professor Jongil Park and the other professors of the BK21 PLUS SNU Mathematical Sciences Division for giving me the opportunity to be a postdoctoral researcher. 
This research was supported by BK21 PLUS SNU Mathematical Sciences Division.

\section{Preliminaries\label{section2}}

A \textit{compression body} is a $3$--manifold which can be obtained by starting with a closed, orientable, connected surface $F$, forming the product $F\times I$, attaching some number of $2$-handles to $F\times\{1\}$ and capping off all  resulting $2$--sphere boundary components that are not contained in $F\times\{0\}$ with $3$-balls. 
The boundary component $F\times\{0\}$ is referred to as $\partial_+$ and the rest of the boundary is referred to as $\partial_-$. 
For a compression body $\V$ with $\partial_-\V=\emptyset$, we call it a \textit{handlebody}.
For a compression body $\V$, we call the genus of $\partial_+\V$ the \textit{genus} of $\V$.
For a compression body $\V$ with $\partial_-\V\neq\emptyset$, we can obtain $\V$ from $\partial_-\V\times I$ by attaching $1$-handles to $\partial_-\V\times \{1\}$.
This means the genus of $\partial_+\V$ is at least the sum of genera of the components of $\partial_-\V$.
For a surface $F$, if there is a embedded disk $D$ in $M$ such that $\partial D\subset F$ is an essential curve in $F$ and $\mathrm{int}(D)\cap F=\emptyset$, then we call $D$ a \textit{compressing disk} for $F$.
For a compression body $\V$, if $D$ is properly embedded in $\V$ and it is a compressing disk for $\partial_+\V$, then we call $D$ a \textit{compressing disk} in $\V$. 
A \textit{Heegaard splitting} of a $3$-manifold $M$ is an expression of $M$ as a union $\V\cup_F \W$, denoted   as $(\V,\W;F)$,  where $\V$ and $\W$ are compression bodies that intersect in a transversally oriented surface $F=\partial_+\V=\partial_+\W$. 
We say $F$ is the \textit{Heegaard surface} of this splitting. 
If $\V$ or $\W$ is homeomorphic to a product, then we call such compression body \textit{trivial} and we say the splitting  is \textit{trivial}. 
If there are compressing disks $V\subset \V$ and $W\subset \W$ such that $V\cap W=\emptyset$, then we say the splitting is \textit{weakly reducible} and call the pair $(V,W)$ a \textit{weak reducing pair}. 
If $(V,W)$ is a weak reducing pair and $\partial V$ is isotopic to $\partial W$ in $F$, then we call $(V,W)$ a \textit{reducing pair}.
If the splitting is not trivial and we cannot take a weak reducing pair, then we call the splitting \textit{strongly irreducible}. 
If there is a pair of compressing disks $(\bar{V},\bar{W})$ such that $\bar{V}$ intersects $\bar{W}$ transversely in a point in $F$, then we call this pair a \textit{canceling pair} and say the splitting is \textit{stabilized}. 
Otherwise, we say the splitting is \textit{unstabilized}.

Let $F$ be a surface of genus at least two in a compact, orientable $3$-manifold $M$. 
Then the \emph{disk complex} $\D(F)$ is defined as follows: 
\begin{enumerate}[(i)]
\item Vertices of $\D(F)$ are isotopy classes of compressing disks for $F$.
\item A set of $m+1$ vertices forms an $m$-simplex if there are representatives for each
that are pairwise disjoint.
\end{enumerate}

Consider a Heegaard splitting $(\V,\W;F)$ of genus at least two in  an orientable, irreducible $3$-manifold $M$. 
Let $\DV(F)$ and $\DW(F)$ be the subspaces of $\D(F)$ spanned by compressing disks in $\V$ and $\W$, respectively. 
We call these subspaces \textit{the disk complexes of $\V$ and $\W$}, respectively.
Let $\DVW(F)$ be the subset of $\D(F)$ consisting of all simplices with at least one vertex from $\DV(F)$ and at least one vertex from  $\DW(F)$.
By definition, if there is a simplex $\sigma$ in $\DVW(F)$ such that $\sigma\subset\DV(F)$ or $\DW(F)$, then it must be a proper subsimplex of a simplex  intersecting both $\DV(F)$ and $\DW(F)$.

If there is no confusion, we will use the disk $V$ instead of the isotopy class $[V]\in\D(F)$ for the sake of convenience.
Hence, we will say (1) $D_1=D_2$ in $\D(F)$ instead of $[D_1]=[D_2]$ and (2) if there is a $k$-simplex $\Delta$ in $\D(F)$, then we will denote $\Delta$ as a set of $k+1$ mutually disjoint, non-isotopic compressing disks for $F$.
Note that if there is a compressing disk in a compression body $\V$, then the isotopy class of the boundary of the disk in $\partial_+\V$ determines the isotopy class of the compressing disk in $\V$.
(Since $\partial F=\emptyset$, the pair $(\V,F)$ is \textit{spotless}. Therefore, the natural map $\nu:\DV(F)\to\mathcal{C}(F)$ induced by taking a compressing disk in $\V$ to its boundary is injective, where $\mathcal{C}(F)$ is the \textit{curve complex} of $F$, see \cite{MS2013} for the details.)
This gives an important lemma.
\begin{lemma}\label{lemma-isotopic-bd}
Let $\mathcal{V}$ be a compression body.
If there are two compressing disks $V_1$ and $V_2$ in $\V$ such that $\partial V_1$ is isotopic to $\partial V_2$ in $\partial_+\V$, then $V_1$ is isotopic to $V_2$ in $\V$.
\end{lemma}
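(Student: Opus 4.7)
The plan is to reduce the statement to the irreducibility of a compression body, which is a standard fact (a compression body is obtained from $\partial_-\V\times I$ by attaching $1$-handles, and this construction yields an irreducible $3$-manifold). Once $V_1$ and $V_2$ are arranged so that $V_1\cup V_2$ is an embedded $2$-sphere in $\V$, this sphere bounds a $3$-ball $B\subset\V$, and $B$ supplies the desired isotopy from $V_1$ to $V_2$ rel $\partial$.

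I would carry this out in three steps. First, since $\partial V_1$ is isotopic to $\partial V_2$ in $F=\partial_+\V$, the isotopy extension theorem gives an ambient isotopy of $F$ carrying $\partial V_2$ to $\partial V_1$; extending this collar-wise into $\V$, I may assume $\partial V_1=\partial V_2$ as curves in $F$. Second, I push the interior of $V_2$ slightly off $V_1$ using a bicollar of $V_1$ in $\V$, so that after a further generic perturbation $V_1\cap V_2$ consists of $\partial V_1$ together with finitely many simple closed curves in the interior. An innermost-disk argument in $V_1$ removes these interior intersection curves: an innermost such curve bounds a disk $E\subset V_1$ with $E\cap V_2=\partial E$, and surgering $V_2$ along $E$ (and choosing the correct side) reduces $|V_1\cap V_2|$; the pieces involved are disks in the ball-like neighborhoods and the resulting disk remains compressing and isotopic to the original. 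After finitely many such surgeries we obtain $V_1\cap V_2=\partial V_1$, so $V_1\cup V_2$ is an embedded $2$-sphere $S$ in $\V$. Third, by irreducibility of $\V$, $S$ bounds a $3$-ball $B$ in $\V$, and a radial isotopy across $B$ carries $V_1$ onto $V_2$ rel $\partial$.

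The main obstacle is the innermost-disk cleanup, because one must verify that the surgeries really produce a disk isotopic to the original $V_2$ rather than an inessential or non-isotopic disk. This is handled in the usual way: each surgery replaces $V_2$ by $V_2'=(V_2\setminus A)\cup E$ where $A$ is a disk subdisk of $V_2$ bounded by the innermost curve; the two-sphere $A\cup E$ lies in $\V$ and, again by irreducibility, bounds a ball giving an isotopy from $V_2$ to $V_2'$. Thus each surgery preserves the isotopy class of $V_2$ in $\V$ while strictly decreasing the number of interior intersection curves, and the process terminates. After that, the concluding ball-bounds-sphere step is immediate and yields the lemma.
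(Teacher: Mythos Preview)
Your proof is correct and is the standard direct argument: arrange $\partial V_1=\partial V_2$, clean out interior intersection circles by innermost-disk surgeries (each of which preserves the isotopy class of $V_2$ by irreducibility of $\V$), and then use irreducibility once more on the resulting $2$-sphere $V_1\cup V_2$ to produce the ball giving the final isotopy. The only places to be careful are exactly the ones you flagged, and you handled them.

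The paper, however, does not argue this way. It simply observes that since $\partial F=\emptyset$ the pair $(\V,F)$ is \emph{spotless}, and then quotes the result of Masur--Schleimer that for a spotless pair the natural map $\nu:\DV(F)\to\mathcal{C}(F)$ sending a compressing disk to its boundary curve is injective; the lemma is then immediate. So the paper outsources the work to a general theorem about disk complexes and curve complexes, whereas you give a self-contained three-dimensional argument relying only on the irreducibility of a compression body. Your route is more elementary and makes the lemma independent of the Masur--Schleimer machinery; the paper's route is a one-line citation but imports a substantially more sophisticated result than is really needed here.
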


We get the following corollary directly from Lemma \ref{lemma-isotopic-bd}.

\begin{corollary}\label{corollary-isotopic-bd}
Let $\mathcal{V}$ be a compression body and $\mathcal{V}'\subset \mathcal{V}$ a compression body such that $\partial_+\mathcal{V}=\partial_+\mathcal{V}'$.
If $V_1$ and $V_2$ are compressing disks in $\mathcal{V}'$ such that $V_1$ is isotopic to $V_2$ in $\V'$, then $V_1$ is isotopic to $V_2$ in $\V$.
\end{corollary}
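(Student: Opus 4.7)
The plan is to reduce the claim to a direct application of Lemma \ref{lemma-isotopic-bd} inside the ambient compression body $\V$, using the boundary curves on the common positive boundary as an intermediary. The initial observation is that $V_1$ and $V_2$ are already compressing disks in $\V$ itself: each is properly embedded in $\V'\subset\V$, and its boundary is an essential curve in $\partial_+\V'=\partial_+\V$, so each $V_i$ qualifies as a compressing disk in $\V$.

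The next step is to convert the isotopy in $\V'$ into an isotopy of boundary curves in $\partial_+\V$. Given an ambient isotopy $\Phi_t$ of $\V'$ with $\Phi_0=\mathrm{id}$ and $\Phi_1(V_1)=V_2$, each disk $\Phi_t(V_1)$ is a properly embedded compressing disk in $\V'$ for $\partial_+\V'$, so its boundary circle $\partial\Phi_t(V_1)$ lies on $\partial\V'$ and, by continuity together with connectedness of a circle, stays in the single component of $\partial\V'$ that initially contained it, namely $\partial_+\V'$. Restricting $\Phi_t$ (or tracking $\partial\Phi_t(V_1)$) then yields an isotopy in $\partial_+\V'=\partial_+\V$ from $\partial V_1$ to $\partial V_2$. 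Applying Lemma \ref{lemma-isotopic-bd} to $\V$ with these two boundary-isotopic compressing disks now gives $V_1$ isotopic to $V_2$ in $\V$.

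I do not expect any genuine difficulty, but the reason one cannot simply quote the inclusion $\V'\hookrightarrow\V$ deserves emphasis: a naive attempt to extend the ambient isotopy $\Phi_t$ of $\V'$ by the identity on $\V\setminus\V'$ need not work, because $\Phi_t$ may move $\partial_-\V'\subset\mathrm{int}(\V)$ nontrivially and therefore fail to match the identity on the complement. The detour through boundary curves on $\partial_+\V$ sidesteps this obstruction entirely, using only the injectivity of the natural map $\nu\colon\D_\V(F)\to\mathcal{C}(F)$ that underlies Lemma \ref{lemma-isotopic-bd}.
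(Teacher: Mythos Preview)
Your proof is correct and follows exactly the route the paper intends: the paper states that the corollary follows directly from Lemma~\ref{lemma-isotopic-bd}, and your argument spells out precisely how---pass from the isotopy in $\V'$ to an isotopy of boundary curves in $\partial_+\V'=\partial_+\V$, then invoke Lemma~\ref{lemma-isotopic-bd} in $\V$. Your remark about why the ambient isotopy of $\V'$ cannot simply be extended by the identity is a helpful clarification of why the detour through boundary curves is the right move.
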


Note that $\DV(F)$ and $\DW(F)$ are contractible (see \cite{8}).
Let $(\V,\W;F)$ be a Heegaard splitting of genus at least two.
Since $\dim(\mathcal{C}(F))$ is finite and there exist at most two disks in $\D(F)$ which correspond to an element of $\mathcal{C}(F)$ (such two disks exist if and only if there exists a reducing pair), $\dim(\D(F))$ is also finite.

\begin{definition}[Bachman, \cite{Bachman2010}]
The \emph{homotopy index} of a complex $\Gamma$ is defined to be 0 if $\Gamma=\emptyset$, and the smallest $n$ such that $\pi_{n-1}(\Gamma)$ is non-trivial, otherwise. 
We say a separating surface $F$ with no torus components is \emph{topologically minimal} if $\mathcal{D}(F)$ is either empty or non-contractible. 
When $F$ is topologically minimal, we say its \emph{topological index} is the homotopy index of $\mathcal{D}(F)$. 
\end{definition}

From now on, we will consider only unstabilized Heegaard splittings of an irreducible $3$-manifold. 
If a Heegaard splitting of a compact $3$-manifold is reducible, then the manifold is reducible or the splitting is stabilized (see \cite{SaitoScharlemannSchultens2005}).
Hence, we can exclude the possibilities of reducing pairs among weak reducing pairs.

Suppose $W$ is a compressing disk for a properly embedded surface $F\subset M$. 
Then there is a subset of $M$ that can be identified with $W\times I$ so that $W=W\times\{\frac{1}2\}$ and $F\cap(W\times I)=(\partial W)\times I$. 
We form the surface $F_W$, obtained by \textit{compressing $F$ along $W$}, by removing $(\partial W)\times I$ from $F$ and replacing it with $W\times(\partial I)$. 
We say the two disks $W\times(\partial I)$ in $F_W$ are the $\textit{scars}$ of $W$.

\begin{lemma}[Lustig and Moriah, Lemma 1.1 of \cite{7}] \label{lemma-2-8}
Suppose $M$ is an irreducible $3$-manifold and $(\V,\W;F)$ is an unstabilized Heegaard splitting of $M$. 
If $F'$ is obtained by compressing $F$ along a collection of pairwise disjoint disks, then no $S^2$ component of $F'$ can have scars from disks in both $\V$ and $\W$. 
\end{lemma}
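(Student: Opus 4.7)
The plan is to argue by contradiction. Suppose some sphere component $S$ of $F'$ carries a scar $V_0$ from a disk $V$ in the $\V$-portion of the compressing collection and a scar $W_0$ from a disk $W$ in the $\W$-portion. My aim is to produce a reducing pair for the splitting $(\V,\W;F)$ and then invoke the Saito--Scharlemann--Schultens result cited above, which gives stabilization in the irreducible manifold $M$, contradicting the unstabilized hypothesis. The main geometric input is that $S$ is a $2$-sphere in an irreducible $3$-manifold and hence bounds a $3$-ball $B\subset M$.

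The model case is when $V_0$ and $W_0$ are the only scars on $S$. Inclusion--exclusion gives $2=\chi(S)=\chi(V_0)+\chi(W_0)+\chi(S\cap F)=2+\chi(S\cap F)$, so $\chi(S\cap F)=0$; since the boundary of $S\cap F$ consists of the two circles $\partial V_0$ and $\partial W_0$, and any component of $S\cap F$ with only one boundary circle would contribute positive Euler characteristic, $S\cap F$ is a single orientable subsurface of genus zero with two boundary components, i.e.\ an annulus. Because $\partial V_0$ is parallel to $\partial V$ along the product annulus $\partial V\times I\subset F$ that was compressed away (and similarly for $W_0$ and $\partial W$), this forces $\partial V$ and $\partial W$ to cobound an annulus in $F$; hence $(V,W)$ is a reducing pair, and we are done.

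For the general case I would reduce to the model case. Among all triples consisting of a sphere component of $F'$ with scars from both sides together with a choice of a $\V$-scar $V_0$ and a $\W$-scar $W_0$ on it, pick one minimizing the total scar count on the sphere. If the minimum is two we are in the model case; otherwise there is another scar $D_0$ on $S$ whose partner scar $D_0'$ either lies on a different sphere component $S'$ of $F'$, in which case uncompressing the disk $D$ replaces $S$ and $S'$ by a single sphere with strictly fewer scars still carrying $V_0$ and $W_0$, contradicting minimality, or else lies on $S$ itself.

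The main obstacle is this last configuration, in which every ``other'' scar on $S$ has its partner also on $S$, so no uncompression preserves the sphere topology. Here I would use the ball $B$ directly together with an embedded arc $\alpha\subset S$ from $\partial V_0$ to $\partial W_0$ avoiding all other scars. After sliding the endpoints of $\alpha$ along the product annuli so that $\alpha$ becomes an arc in $F$ from $\partial V$ to $\partial W$ with interior disjoint from $\partial V\cup\partial W$, the combined data of $V$, $W$, $\alpha$, and $B$ should permit a band-sum construction yielding compressing disks $\bar V\subset\V,\bar W\subset\W$ whose boundaries cobound an annulus in $F$ (giving a reducing pair) or meet transversely in a single point (giving a canceling pair). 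Making this surgery precise and verifying the intersection pattern in the presence of the remaining scars will be the technical heart of the argument.
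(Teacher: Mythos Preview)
The paper does not prove this lemma; it is quoted as Lemma~1.1 of Lustig--Moriah with no argument given. Your overall strategy (manufacture a reducing pair, then apply the cited fact that a reducible splitting of an irreducible manifold is stabilized) is the right one, and the two-scar model case is handled correctly.

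The reduction to the model case, however, does not go through. There is a minor omission---the partner scar $D_0'$ might lie on a non-sphere component of $F'$---but the real failure is that even when $D_0'$ sits on a second sphere $S'$, uncompressing $D$ does not decrease the scar count: the resulting sphere carries $(\lvert S\rvert-1)+(\lvert S'\rvert-1)$ scars, and any sphere component of $F'$ has at least two scars (a single scar would make an essential curve of $F$ bound a disk in $F$), so this total is at least $\lvert S\rvert$. Your minimality argument therefore stalls, and the band-sum endgame via the ball $B$ is not concrete enough to repair it. The clean fix avoids the reduction entirely. The non-scar part $P=S\cap F$ is a planar subsurface of $F$ whose boundary circles are exactly the scar boundaries; choose a simple closed curve $c\subset P$ separating the $\V$-scar boundaries from the $\W$-scar boundaries (such a curve exists in any planar surface for any bipartition of its boundary components). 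The two disks that $c$ bounds in $S$ then push, rel $c$, to properly embedded disks in $\V$ and in $\W$ respectively, since each half contains scars from only one side. Finally $c$ is essential in $F$: a disk in $F$ bounded by $c$ could contain no compressing curve $\partial D^l$ (each is essential in $F$) and hence would lie in $P$, contradicting the separation property of $c$. This produces the reducing pair directly, without ever reducing to the two-scar case.
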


The next is the definition of ``\textit{generalized Heegaard splitting}'' originated from \cite{ScharlemannThompson1994}.

\begin{definition}[Definition 4.1 of \cite{Bachman2008}]\label{definition-2-9}
A \textit{generalized Heegaard splitting} (we will use \textit{GHS} as abbreviation from now on) $\mathbf{H}$ of a $3$-manifold $M$ is a pair of sets of pairwise disjoint, transversally oriented, connected surfaces, $\operatorname{Thick}(\mathbf{H})$ and $\operatorname{Thin}(\mathbf{H})$ (resp called the \textit{thick levels} and \textit{thin levels}), which satisfies the following conditions.
\begin{enumerate}
\item Each component $M'$ of $M-\operatorname{Thin}(\mathbf{H})$ meets a unique element $H_+$ of $\operatorname{Thick}(\mathbf{H})$ and $H_+$ is a Heegaard surface in the closure of $M'$, say  $M(H_+)$.
\item As each Heegaard surface $H_+\subset M(H_+)$ is transversally oriented, we can consistently talk about the points of $M(H_+)$ that are ``above''  $H_+$ or ``below'' $H_+$.
Suppose $H_-\in\operatorname{Thin}(\mathbf{H})$.
Let $M(H_+)$ and $M(H_+')$ be the submanifolds on each side of $H_-$.
Then $H_-$ is below $H_+$ if and only if it is above $H_+'$.
\item There is a partial ordering on the elements of $\operatorname{Thin}(\mathbf{H})$ which satisfies the following: Suppose $H_+$ is an element of $\operatorname{Thick}(\mathbf{H})$, $H_-$ is a component of $\partial M(H_+)$ above $H_+$ and $H_-'$ is a component of $\partial M(H_+)$ below $H_+$.
Then $H_->H_-'$.
\end{enumerate}
We denote the maximal subset of $\operatorname{Thin}(\mathbf{H})$ consisting of surfaces in $\mathrm{int}(M)$ as $\overline{\operatorname{Thin}}(\mathbf{H})$ and call it  the \textit{inner thin level}.
\end{definition}

\begin{definition}[a restricted version of Definition 5.2, Definition 5.3, and Definition 5.6 of \cite{Bachman2008}]\label{definition-WR}
Let $M$ be an orientable, irreducible 3-manifold.
Let $\mathbf{H}$ be an unstabilized Heegaard splitting of $M$, i.e. $\operatorname{Thick}(\mathbf{H})=\{F\}$ and $\operatorname{Thin}(\mathbf{H})$ consists of $\partial M$.
Let $V$ and $W$ be disjoint compressing disks for $F$ from the opposite sides of $F$.
Here, Lemma \ref{lemma-2-8} guarantees ${F}_{VW}$ does not have a $2$-sphere component.
Define
$$\operatorname{Thick}(\mathbf{G'})=(\operatorname{Thick}(\mathbf{H})-\{F\})\cup\{{F}_{V}, {F}_{W}\}, \text{ and}$$
$$\operatorname{Thin}(\mathbf{G'})=\operatorname{Thin}(\mathbf{H})\cup\{{F}_{VW}\},$$
where we assume each element of $\operatorname{Thick}(\mathbf{G'})$ belongs to $\mathrm{int}(\V)$ or $\mathrm{int}(\W)$ by slightly pushing $F_V$ and $F_W$ off into $\mathrm{int}(\V)$ and $\mathrm{int}(\W)$, respectively, and then also assume they miss $F_{VW}$. 
(In the proof of Lemma 2.16 of \cite{JungsooKim2014}, the author used the term $F_V$ for denoting $\operatorname{Thick}(\mathbf{G'})\cap \V$ for the sake of convenience.
In this article, we will only use this term for this definition and we will use $\tilde{F}_V$ for general cases to avoid confusion.)
We say the GHS $\mathbf{\mathbf{G'}}=\{\operatorname{Thick}(\mathbf{G'}),\operatorname{Thin}(\mathbf{G'})\}$ is obtained from $\mathbf{H}$ by \textit{preweak reduction} along $(V,W)$. 
The relative position of the elements of $\operatorname{Thick}(\mathbf{G'})$ and $\operatorname{Thin}(\mathbf{G'})$ follows the order described in Figure 10 of \cite{Bachman2008}.
If there are elements $S\in \operatorname{Thick}(\mathbf{G'})$ and $s\in \overline{\operatorname{Thin}}(\mathbf{G'})$ that cobound a product region $P$ of $M$ such that $P\cap \operatorname{Thick}(\mathbf{G'})=S$ and $P\cap\operatorname{Thin}(\mathbf{G'})=s$, then remove $S$ from $\operatorname{Thick}(\mathbf{G'})$ and $s$ from $\operatorname{Thin}(\mathbf{G'})$.
If we repeat this procedure until there are no such two elements of $\operatorname{Thick}(\mathbf{G'})$ and $\overline{\operatorname{Thin}}(\mathbf{G'})$, then we get the resulting GHS $\mathbf{G}$ of $M$ from  the GHS $\mathbf{G'}$ (see Lemma 5.4 of \cite{Bachman2008}) and we say $\mathbf{G}$ is obtained from $\mathbf{G'}$ by \textit{cleaning}.
We say the GHS $\mathbf{G}$ of $M$ given by preweak reduction along $(V,W)$, followed by cleaning, is obtained from $\mathbf{H}$ by \textit{weak reduction} along $(V,W)$.
From now on, we will denote the GHS $\mathbf{G}$ obtained by weak reduction from the Heegaard surface $F$ along a weak reducing pair $(V,W)$ as the ordered triple $(\bar{F}_{V},\bar{F}_{V W},\bar{F}_{W})$ without denoting $\partial M$ for the sake of convenience, where (i) $\Thick(\mathbf{G})\cap\V=\bar{F}_V$, $\Thick(\mathbf{G})\cap\W=\bar{F}_W$, and $\overline{\Thin}(\mathbf{G})=\bar{F}_{VW}$ and (ii) the notations $\bar{F}_V$, $\bar{F}_W$ and $\bar{F}_{VW}$ imply they come from $F_V$, $F_W$ and $F_{VW}$, respectively.
Note that we will confirm the GHSs obtained by  weak reductions in Lemma \ref{lemma-five-GHSs}.
\end{definition}

\begin{definition}\label{definition-HS-GHS}
Let $F$ be a weakly reducible Heegaard surface in an orientable, irreducible $3$-manifold $M$.
We say the GHSs $\mathbf{H}_1$ and $\mathbf{H}_2$ are \textit{isotopic} if there is an ambient isotopy $h_t$ defined on $M$ such that $h_t$ sends  $\Thick(\mathbf{H}_1)\cup\Thin(\mathbf{H}_1)$ into $\Thick(\mathbf{H}_2)\cup\Thin(\mathbf{H}_2)$.
Let $\mathcal{GHS}_F$ be the set of isotopy classes of the GHSs obtained by weak reductions from $(\V,\W;F)$.
We will say two GHSs $\mathbf{H}_1=(\bar{F}_{V_1},\bar{F}_{V_1 W_1},\bar{F}_{W_1})$ and $\mathbf{H}_2=(\bar{F}_{V_2},\bar{F}_{V_2 W_2},\bar{F}_{W_2})$ obtained by weak reductions are \textit{equivalent} if (i) $\mathbf{H}_1$ is isotopic to $\mathbf{H}_2$, i.e. $[\mathbf{H}_1]=[\mathbf{H}_2]$ in $\mathcal{GHS}_F$ (ii)
$\bar{F}_{V_1}$ is isotopic to $\bar{F}_{V_2}$ in $\V$, (iii) $\bar{F}_{W_1}$ is isotopic to $\bar{F}_{W_2}$ in $\W$, and (iv) $\bar{F}_{V_1 W_1}$ is isotopic to $\bar{F}_{V_2 W_2}$ in $M$.
\end{definition}

We get the following lemma from Definition \ref{definition-HS-GHS} and omit the trivial proof.

\begin{lemma}\label{lemma-equivalent-class}
Let $\mathcal{GHS}_F^{\ast}$ be the set of GHSs obtained by weak reductions from $(\V,\W;F)$ and $\sim$ the relation defined on $\mathcal{GHS}_F^\ast$ such that $\mathbf{H}_1\sim\mathbf{H}_2$ if $\mathbf{H}_1$ and $\mathbf{H}_2$ are equivalent.
Then $\sim$ is an equivalent relation on $\mathcal{GHS}^\ast_F$.
\end{lemma}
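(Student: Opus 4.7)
The plan is to verify the three axioms of an equivalence relation directly from Definition \ref{definition-HS-GHS}. The decisive observation is that $\sim$ is simply the conjunction of four isotopy-type conditions: (i) isotopy of $\mathbf{H}_1$ and $\mathbf{H}_2$ as GHSs in $M$ (via an ambient isotopy sending thick and thin levels to thick and thin levels), (ii) isotopy of their $\V$-thick levels in $\V$, (iii) isotopy of their $\W$-thick levels in $\W$, and (iv) isotopy of their inner thin levels in $M$. Since each of these four underlying relations is well known to be an equivalence relation (isotopy of properly embedded surfaces in a fixed ambient manifold is always reflexive, symmetric, and transitive), and a finite intersection of equivalence relations is itself an equivalence relation, the lemma reduces to a routine check.

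I would then carry out the three checks in order. For reflexivity, the identity ambient isotopies on $M$, $\V$, and $\W$ simultaneously witness all four conditions, so $\mathbf{H} \sim \mathbf{H}$. For symmetry, if ambient isotopies $h^{(j)}_t$ ($j=1,\dots,4$) realize the four conditions from $\mathbf{H}_1$ to $\mathbf{H}_2$, then their time-reversals $h^{(j)}_{1-t}$ realize the four conditions from $\mathbf{H}_2$ to $\mathbf{H}_1$; in particular the reverse of an ambient isotopy of $M$ preserving $\V$ and $\W$ still sends thick to thick and thin to thin. For transitivity, concatenation of ambient isotopies (after the usual time-reparametrization) realizes each of the four conditions from $\mathbf{H}_1$ to $\mathbf{H}_3$.

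There is essentially no obstacle: the lemma is a bookkeeping statement, intended only to guarantee that the equivalence classes appearing in Theorem \ref{theorem-main} and in the definition of $\Phi$ are well defined. The only small point worth noting is that the four isotopies witnessing $\mathbf{H}_1 \sim \mathbf{H}_2$ need not be the restrictions of a single ambient isotopy; but since the axioms are verified coordinate by coordinate across the four conditions, this causes no trouble, which is why the author dismisses the proof as trivial.
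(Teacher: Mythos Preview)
Your proposal is correct and matches the paper's approach: the paper explicitly omits the proof as trivial, and your argument---verifying reflexivity, symmetry, and transitivity coordinatewise across the four isotopy conditions, using that isotopy is an equivalence relation and that a conjunction of equivalence relations is again one---is exactly the routine check the author has in mind.
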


Considering Lemma \ref{lemma-equivalent-class}, $\mathcal{GHS}^\ast_F/\sim$ is the set of the equivalent classes.
We will denote the equivalent class of a GHS $\mathbf{H} \in \mathcal{GHS}^\ast_F$ as $[\mathbf{H}]^\ast$ to distinguish it from the isotopy class $[\mathbf{H}]$.

The next two lemmas give  natural, but important observations for a genus $n\geq 2$ compression body  having a special  compressing disk such that it divides the compression body into one or two products.

\begin{lemma}[Ido, Jang and Kobayashi, Lemma 3.3 of \cite{IdoJangKobayashi2014}]\label{lemma-2-19}
Let $\V$ be a genus $n\geq 2$ compression body such that $\partial_-\V$ consists of a genus $n-1$ connected surface.
Then there is a unique nonseparating compressing disk in $\V$ up to isotopy.
Since any separating compressing disk in $\V$ cuts off a uniquely determined solid torus from $\V$, every nonseparating compressing disk in $\V$ is isotopic to a meridian disk of the solid torus in $\V$.
\end{lemma}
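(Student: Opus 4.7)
The plan is to leverage the fact that $\partial_+\V$ has genus exactly one more than $\partial_-\V$, so $\V$ is built from $\partial_-\V\times I$ by attaching exactly one 1-handle $h$ to $\partial_-\V\times\{1\}$. The cocore $D_0$ of $h$ is then a nonseparating compressing disk in $\V$, since $\V\setminus N(D_0)$ is the connected product $\partial_-\V\times I$. This settles existence.

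For uniqueness, I would take an arbitrary nonseparating compressing disk $D\subset\V$ and analyze $\V'=\V\setminus N(D)$ using the standard fact that cutting a compression body along a compressing disk produces a compression body with the same $\partial_-$ and with new $\partial_+$ obtained by surface-compressing the old $\partial_+$ along $\partial D$. Since $\partial D$ is nonseparating, $\V'$ is connected with $\partial_-\V'=\partial_-\V$ of genus $n-1$ and $\partial_+\V'$ connected of genus $n-1$; a compression body whose connected $\partial_-$ and $\partial_+$ share the same genus is necessarily a product, so $\V'\cong\partial_-\V\times I$. The disk $D$ leaves two scar disks $D_+,D_-\subset\partial_+\V'$, and $\V$ is recovered from $\V'$ by gluing a 1-handle along $D_+\cup D_-$; the same analysis applied to $D_0$ produces scar disks $D_0^+,D_0^-\subset\partial_+\V'$. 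Because any two ordered pairs of disjoint disks in a connected surface of positive genus are ambient-isotopic, I can find an ambient isotopy of $\partial_+\V'$ taking $(D_+,D_-)$ onto $(D_0^+,D_0^-)$; extending it as a product isotopy of $\V'=\partial_-\V\times I$ supported in a collar of $\partial_+\V'$, and then across the reglued 1-handle, yields an ambient isotopy of $\V$ sending $D$ to $D_0$.

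For the last sentence of the lemma, let $E$ be a separating compressing disk in $\V$, and let $\V_1,\V_2$ be the two compression-body components of $\V\setminus N(E)$, with $\partial_-\V\subset\V_1$ by connectedness of $\partial_-\V$. Let $g$ and $n-g$ denote the genera of the two closed surfaces obtained from $\partial_+\V$ by cutting along $\partial E$ and capping off with copies of $E$. Then $\V_2$ is a handlebody of genus $n-g$ and $\V_1$ is a compression body with $\partial_-\V_1=\partial_-\V$ (genus $n-1$) and $\partial_+\V_1$ of genus $g$. The requirement $g\geq n-1$, coming from the fact that in a compression body the genus of $\partial_+$ is at least that of $\partial_-$, combined with $n-g\geq 1$, which comes from the essentialness of $E$, forces $g=n-1$ and $n-g=1$; hence $\V_2$ is a solid torus, and its meridian is a nonseparating compressing disk, isotopic to $D_0$ by the uniqueness just established. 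The main obstacle in this plan is the extension step in the uniqueness argument: one must verify carefully that the ambient isotopy moving $(D_+,D_-)$ to $(D_0^+,D_0^-)$ extends across the reglued 1-handle to an ambient isotopy of $\V$, which reduces to the observation that the framing of an orientable 1-handle in an orientable $3$-manifold is determined up to isotopy by its two attaching disks.
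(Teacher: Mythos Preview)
The paper does not supply its own proof of this lemma; it is quoted from Ido--Jang--Kobayashi, and the only justification the paper adds is the remark immediately following the statement that the separating-disk claim is guaranteed by Lemma~1.3 of \cite{ScharlemannThompson1993} via a genus count. Your treatment of the separating-disk sentence is exactly that genus count, and your existence argument is standard and fine.

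Your uniqueness argument, however, has a genuine gap. You cut $\V$ along $D$ to obtain $\V'$ with scars $D_+,D_-\subset\partial_+\V'$, and then write that ``the same analysis applied to $D_0$ produces scar disks $D_0^+,D_0^-\subset\partial_+\V'$.'' But cutting along $D_0$ produces a \emph{different} manifold $\V_0'$, and the scars $D_0^\pm$ live in $\partial_+\V_0'$, not in $\partial_+\V'$; nothing guarantees $D_0$ even survives in $\V'$, since you have not arranged $D\cap D_0=\emptyset$. What your construction really yields, after implicitly identifying $\V'$ with $\V_0'$, is a self-homeomorphism of $\V$ carrying $D$ to $D_0$. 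Promoting this to an ambient isotopy of $\V$ requires showing that this homeomorphism is isotopic to the identity, which is additional (and here essentially circular) work; the ``extension across the reglued 1-handle'' you flag as the main obstacle does not by itself address this.

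The clean fix is the one the paper carries out in detail for the companion Lemma~\ref{lemma-2-20}: first make $D$ and $D_0$ disjoint by innermost-disk and outermost-arc arguments, then cut along $D_0$ alone. Now $D$ genuinely sits inside the single product $\V_0'\cong\partial_-\V\times I$, where every properly embedded disk with boundary in $\partial_+\V_0'$ is boundary-parallel. The disk in $\partial_+\V_0'$ parallel to $D$ must contain exactly one of the two scars of $D_0$ (containing neither contradicts $D$ being a compressing disk; containing both forces $\partial D$ to be separating in $\partial_+\V$), so $\partial D$ is isotopic to $\partial D_0$ in $\partial_+\V$, and Lemma~\ref{lemma-isotopic-bd} finishes.
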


Note that the phrase ``\textit{any separating compressing disk in $\V$ cuts off a uniquely determined solid torus from $\V$}'' of Lemma \ref{lemma-2-19} is guaranteed by Lemma 1.3 of \cite{ScharlemannThompson1993} by considering the genus of each component.
The next lemma deals with the case where the negative boundary is disconnected.
For the sake of convenience, if we say $\V'$ is a \textit{disconnected compression body}, then it means $\V'$ is a collection of mutually disjoint compression bodies, where $\partial_+ \V'$ (resp $\partial_-\V'$) is the union of the positive (resp negative) boundaries of them.

\begin{lemma}\label{lemma-2-20}
Let $\V$ be a genus $n\geq 2$ compression body such that $\partial_-\V$ consists of a genus $m\geq 1$ connected surface and a genus $(n-m)\geq 1$ connected surface.
Then there is a unique compressing disk in $\V$ up to isotopy and it is separating in $\V$.
\end{lemma}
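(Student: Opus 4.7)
The plan is to first describe $\V$ concretely. An Euler characteristic count gives $\chi(\partial_+\V) = 2-2n$ and $\chi(\partial_-\V) = 4-2n$, and since each $1$-handle attached to $\partial_-\V\times\{1\}$ in the dual construction of a compression body drops the Euler characteristic of the upper boundary by $2$, exactly one such $1$-handle $H$ is attached. This $1$-handle must connect the two components of $\partial_-\V\times\{1\}$, for otherwise the upper boundary would be disconnected, with genera $\{m+1,n-m\}$ or $\{m,n-m+1\}$, contradicting the hypothesis that $\partial_+\V$ is the connected genus $n$ surface. Hence $\V\cong(\Sigma_m\times I)\cup_H(\Sigma_{n-m}\times I)$, and the co-core $D_0$ of $H$ is a separating compressing disk.

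Next I show any compressing disk $D\subset\V$ is separating. If $D$ were non-separating, cutting $\V$ along $D$ would yield a compression body with positive boundary of genus $n-1$ but negative boundary of total genus $n$, violating the inequality ``the genus of $\partial_+\V$ is at least the sum of the genera of the components of $\partial_-\V$'' noted in the preliminaries. So $D$ separates $\V$ into $\V_1',\V_2'$ with positive genera $g_1+g_2=n$, and each component of $\partial_-\V$ lies in a single $\V_i'$. If both components of $\partial_-\V$ lay on the same side, say in $\V_1'$, the inequality would force $g_1\geq n$ and hence $g_2=0$; then $\V_2'$ would be a $3$-ball whose boundary is $D$ capped off by a subdisk of $\partial_+\V$, making $\partial D$ inessential, a contradiction. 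So each component of $\partial_-\V$ lies in a distinct $\V_i'$, and the inequality forces each $\V_i'\cong\Sigma_{g_i}\times I$ with $\{g_1,g_2\}=\{m,n-m\}$.

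For uniqueness, given compressing disks $D_1,D_2$, I isotope them to intersect transversely with $|D_1\cap D_2|$ minimal. Irreducibility of $\V$ (as a compression body) permits innermost-disk surgery to remove circle components, and outermost-arc surgery using sub-disks of $D_1$ to boundary-compress $D_2$ reduces arc components while preserving the compressing property. Once $D_1\cap D_2=\emptyset$, $D_2$ lies in one component of $\V\setminus D_1$, a product $\Sigma_{g_i}\times I$. Since $\Sigma_{g_i}\times\{0\}$ is incompressible in a product, $\partial D_2$ bounds a disk in the closed positive boundary of this product, and essentiality of $\partial D_2$ in the original $\partial_+\V$ forces that disk to contain the scar of $D_1$. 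So $\partial D_2$ is parallel to $\partial D_1$ in $\partial_+\V$, and Lemma \ref{lemma-isotopic-bd} gives $D_1$ isotopic to $D_2$.

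The main obstacle I expect is the bookkeeping in the outermost-arc reduction: I must verify that each boundary-compression genuinely lowers $|D_1\cap D_2|$ and leaves a disk that is still compressing, i.e., not boundary-parallel or inessential in $\partial_+\V$. A shortcut that avoids the reduction is to appeal directly to the product structure of each side of $D_1$ and classify properly embedded disks in $\Sigma_{g_i}\times I$ with essential boundary, but this requires the same kind of incompressibility argument. The standard surgery route sketched here is the cleanest path once the product decomposition in the previous step is in hand.
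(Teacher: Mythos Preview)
Your overall structure matches the paper's closely: both establish that every compressing disk is separating via the genus inequality, both show the complementary pieces are products $\Sigma_m\times I$ and $\Sigma_{n-m}\times I$, and both handle the disjoint case by arguing that a compressing disk trapped in a product piece must have boundary parallel to the scar, then invoke Lemma~\ref{lemma-isotopic-bd}.

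The gap is in your intersection-reduction step, and it is not the one you flag. Boundary-compressing $D_2$ along an outermost sub-disk $\Delta$ of $D_1$ replaces $D_2$ by two disks $D_2',D_2''$; while it is standard that at least one of them is still a compressing disk with fewer arcs of intersection, the surgered disk need not be isotopic to the original $D_2$. Iterating, you produce \emph{some} compressing disk disjoint from $D_1$, hence isotopic to $D_1$ by your disjoint-case argument, but you have not shown that $D_2$ itself is isotopic to $D_1$. The concern you raise (that the result remains essential) is routine; the real obstruction is preservation of the isotopy class under surgery, which you never address and which your notation obscures by continuing to write ``$D_2$'' after surgering.

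The paper avoids this by using isotopy rather than surgery. It takes an outermost disk $\Delta$ in $V$ (your $D_2$) relative to the arcs $V\cap\bar V$; this $\Delta$ lies in one product piece $S_m\times I$, and since $\partial\Delta\subset S_m\times\{0\}$ it bounds a disk there, so $\Delta$ co-bounds a $3$-ball $B$ in $S_m\times I$. One then isotopes $V$ across $B$, pushing the outermost arc off $\bar V$ while keeping $V$ in its isotopy class throughout. This is precisely what your ``shortcut'' paragraph is reaching for: the product structure on each side of the fixed disk is what makes the outermost bigon boundary-parallel, and hence removable by an honest isotopy rather than a surgery.
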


\begin{proof}
Let $S_m$ and $S_{n-m}$ be the genus $m$ and $(n-m)$ components of $\partial_-\V$, respectively.
Since $\V$ is not trivial, there is a compressing disk $\bar{V}$ in $\V$.
Then the negative boundary of the (possibly disconnected) compression body $\V'$ obtained by cutting $\V$ along $\bar{V}$ consists of $S_m$ and $S_{n-m}$ by Lemma 1.3 of \cite{ScharlemannThompson1993}.
If $\bar{V}$ is nonseparating in $\V$, then $g(\partial_+\V')=n-1$ and therefore it is less than the sum of genera of components  of $\partial_-\V'$, leading to a contradiction.
Hence, $\bar{V}$ is separating in $\V$ and $\V'$ consists of two compression bodies such that one contains $S_m$ and the other contains $S_{n-m}$ in their negative boundaries, respectively.
Here, the positive boundaries of them must be of genera $m$ and $n-m$, respectively, where they are of the smallest genera, otherwise the genus of $F$ would be larger than $n$.
This means one is $S_m\times I$ and the other is  $S_{n-m}\times I$.
We assume $(S_m\times \{0\})\cap(S_{n-m}\times \{0\})=\bar{V}$.
We will prove every compressing disk $V$ in $\V$ is isotopic to $\bar{V}$ in $\V$.

Suppose $V\cap \bar{V}=\emptyset$.
Then we can assume $V\subset S_m\times I$ without loss of generality.
Since $V$ is a properly embedded disk in $S_m\times I$ such that $\partial V\subset S_m\times \{0\}$ and there is no compressing disk in the trivial compression body $S_m \times I$, there is a disk $D\subset S_m\times\{0\}$ such that $\partial V=\partial D$.
By using the sphere $V\cup D$, we can see $V$ cuts off a $3$-ball $B$ from $S_m\times I$ such that $\partial B= V\cup D$.
If $D$ does not contain $\bar{V}$, then we get $D\subset\partial_+\V$, violating the assumption that $V$ is a compressing disk in $\V$.
Hence, $D$ contains $\bar{V}$, i.e. $\partial V$ is isotopic to $\partial \bar{V}$ in $\partial_+\V$ through the annulus between them.
This means $V$ is isotopic to $\bar{V}$ in $\V$ by Lemma \ref{lemma-isotopic-bd}, leading to the result.

Suppose $V\cap \bar{V}\neq\emptyset$.
Then we can assume $V$ intersects $\bar{V}$ transversely and  $V\cap \bar{V}$ has no loop components by standard innermost disk arguments.
Considering $V\cap\bar{V}$ in $V$, there is an outermost disk $\Delta$ in $V$.
Without loss of generality, assume $\Delta\subset S_m\times I$.
Using the same argument in the previous paragraph, $\Delta$ cuts off a $3$-ball $B$ from $S_m\times I$.
Here, $B\cap (S_m\times\{0\})$ is a disk $D$ such that (i) $D\cap \bar{V}$ is a disk $\bar{\Delta}$, (ii) $D\cap \partial_+\V$ is the disk $\bar{D}=\mathrm{cl}(D-\bar{\Delta})$, and (iii) $\bar{\Delta}\cap\bar{D}$ is the arc $\gamma=\bar{\Delta}\cap\partial_+\V$.
Hence, we can push off $B$ into $S_{n-m}\times I$ without affecting $V\cap ((S_m\times I) -B)$ so that the isotoped image of $B$ would miss $\bar{V}$, where the intersection $\bar{D}=B\cap (\partial_+ \V\cap(S_m\times \{0\}))$ shrinks into $\gamma$ and then disappears (see Figure \ref{ballisotopy}).
\begin{figure}
	\includegraphics[width=9cm]{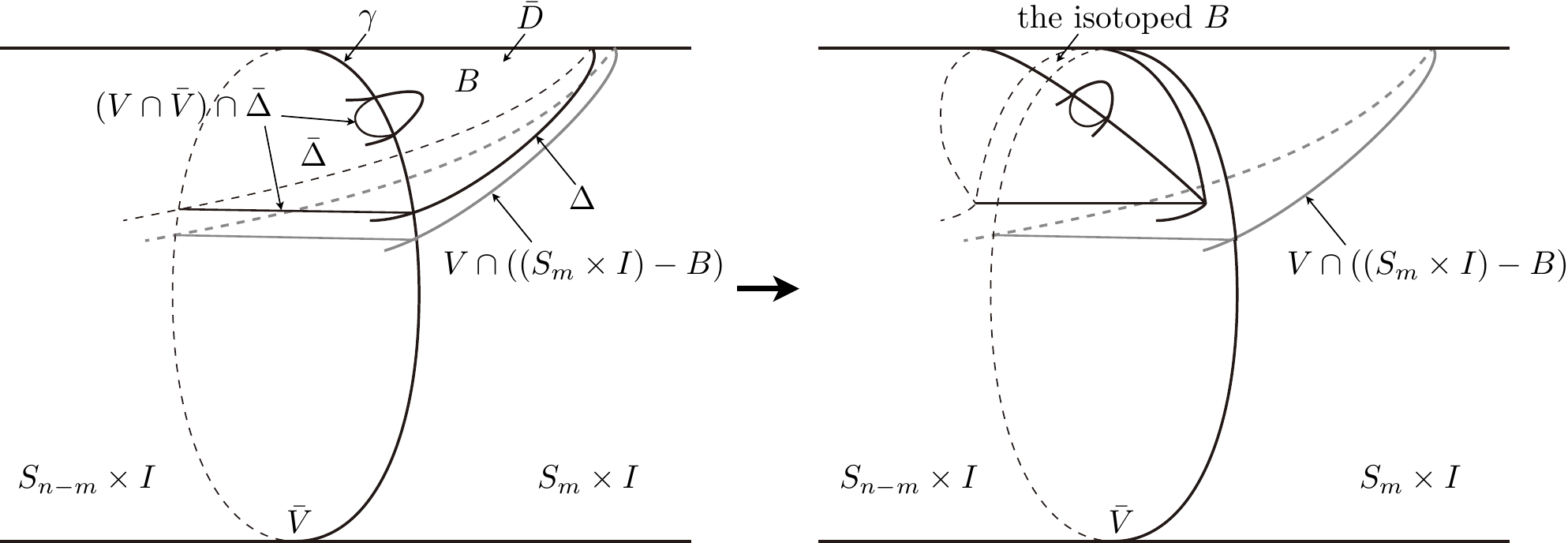}
	\caption{the isotopy pushing off $B$ into $S_{n-m}\times I$ \label{ballisotopy}}
\end{figure}
This realizes an isotopy of $V$ removing $(V\cap \bar{V})\cap\bar{\Delta}$ from $V\cap \bar{V}$ without affecting the other intersections.

Repeating the previous argument until $V\cap\bar{V}=\emptyset$, we conclude $V$ is isotopic to $\bar{V}$ in $\V$ by using the case where $V\cap\bar{V}=\emptyset$.

This completes the proof.
\end{proof}

\begin{lemma}\label{lemma-region}
Let $\V$ be a genus $n\geq 2$ compression body, $V$ a compressing disk in $\V$, and $F=\partial_+\V$.
Let $\tilde{F}_V$ be the surface obtained by pushing $F_V$ off slightly into $\mathrm{int}(\V)$.
Let $\tilde{F}'_V$ be a component of $\tilde{F}_V$ and $\tilde{\V}'$ the closure of the component of $\V-\tilde{F}_V'$ intersecting $F$. 
Then $\tilde{\V}'$ is a compression body such that $\partial_+{\tilde{\V}}'=\partial_+\V$.
\end{lemma}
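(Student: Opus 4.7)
The plan is to identify $\tilde{\V}'$ explicitly as a compression body, first by studying a small regular neighborhood of $F\cup V$ and then by splitting into cases according to whether $V$ is separating in $F$.

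Let $R\subset \V$ denote a regular neighborhood of $F\cup V$, built as the union of a collar $F\times[0,\epsilon]$ of $F$ and a small bicollar of the disk $V$, glued along the annular neighborhood of $\partial V$ in $F$. Then $R$ is obtained from $F\times I$ by attaching one $2$-handle along $\partial V\subset F\times\{1\}$, and so $R$ is itself a compression body with $\partial_+R=F$ and $\partial_-R=\tilde{F}_V$; the hypothesis $n\geq 2$ together with the essentiality of $\partial V$ in $F$ forces each component of $F_V$ to have positive genus, so no sphere capping is required. In particular, $\tilde{F}_V=\partial_-R$ separates $\V$ into $R$ and $\mathrm{cl}(\V\setminus R)$.

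If $V$ is non-separating in $F$, then $\tilde{F}_V=\tilde{F}_V'$ is connected; since $F\subset R$, the component of $\V\setminus\tilde{F}_V'$ containing $F$ is $R$, and $\tilde{\V}'=R$ is already a compression body with $\partial_+\tilde{\V}'=F$. If $V$ is separating in $F$, write $\tilde{F}_V=\tilde{F}_V^1\sqcup\tilde{F}_V^2$ with $\tilde{F}_V'=\tilde{F}_V^1$ (without loss of generality), and let $X$ denote the union of those components of $\mathrm{cl}(\V\setminus R)$ whose closure meets $\tilde{F}_V^2$. After deleting $\tilde{F}_V^1$, the surface $\tilde{F}_V^2$ still joins $R$ to $\mathrm{cl}(\V\setminus R)$, so the component of $\V\setminus\tilde{F}_V^1$ containing $F$ has the form $\tilde{\V}'=R\cup_{\tilde{F}_V^2}X$. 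By Lemma~1.3 of \cite{ScharlemannThompson1993}, cutting $\V$ along $V$ yields a (possibly disconnected) compression body, which is canonically identified with $\mathrm{cl}(\V\setminus R)$ after removing a collar of its positive boundary; thus $X$ is itself a compression body carrying $\tilde{F}_V^2$ as (part of) its positive boundary. Stacking $X$ onto $R$ along $\tilde{F}_V^2$ realizes $\tilde{\V}'$ as $F\times I$ with the $2$-handle for $R$ attached along $\partial V$, followed by the $2$-handles comprising $X$; since neither $\tilde{F}_V^1$ nor any component of $\partial_-\V$ is a sphere, $\tilde{\V}'$ is a compression body with $\partial_+\tilde{\V}'=F=\partial_+\V$.

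The main technical obstacle will be the separating case: I must verify carefully that the component of $\V\setminus\tilde{F}_V^1$ containing $F$ is exactly $R\cup X$, and that the handle decompositions of $R$ and $X$ combine coherently across the intermediate surface $\tilde{F}_V^2$ to yield a compression body whose positive boundary is exactly $F$ and whose negative boundary contains no sphere components. The sub-case where $V$ fails to separate $\V$, although $\partial V$ separates $F$, requires handling the connected cut-open compression body $X$ (whose positive boundary has two components) rather than two disjoint compression bodies.
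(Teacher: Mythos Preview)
Your approach is essentially the same as the paper's: both split on whether $V$ separates and, in the separating case, assemble $\tilde{\V}'$ from a product collar, the handle dual to $V$, and the compression body on the far side of $V$ (the paper's $\V''$, your $X$), invoking Lemma~1.3 of \cite{ScharlemannThompson1993} for the latter. The only cosmetic difference is that the paper reads the handle structure from the $\partial_-$ side (a $1$-handle with cocore $V$ attached to $\tilde{F}_V'\times I$ and $\V''$), while you read it dually from the $\partial_+$ side (a $2$-handle along $\partial V$ on $F\times I$, followed by the $2$-handles of $X$). Both viewpoints yield a compression body with $\partial_+=F$ for the same reason.

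One remark on your closing paragraph: the sub-case you worry about---$\partial V$ separating $F$ but $V$ nonseparating in $\V$---simply does not occur in a compression body, and you already have the tool in hand. By the very Lemma~1.3 of \cite{ScharlemannThompson1993} you cite, cutting $\V$ along $V$ produces a (possibly disconnected) compression body whose positive boundary is $F_V$. Since a connected compression body has connected positive boundary, and $F_V$ is disconnected when $\partial V$ separates $F$, the cut-open manifold must be disconnected; hence $V$ separates $\V$. So your $X$ is always a single compression body with $\partial_+X=\tilde{F}_V^2$, and no extra case analysis is needed.
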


\begin{proof}
If $V$ is nonseparating, then $\tilde{\V}'$ is obtained from $\tilde{F}_V\times I$ by attaching a $1$-handle whose cocore disk is $V$ to $\tilde{F}_V\times \{1\}$, i.e. $\tilde{\V}'$ is a compression body.

Suppose $V$ is separating in $\V$.
Let $\V'$ and $\V''$ be the $3$-manifolds obtained by cutting $\V$ along a small product neighborhood of $V$ in $\V$ and assume $\V'$ contains $\tilde{F}'_V$.
Then $\V'$ and $\V''$ are compression bodies such that $\partial_-\V'\cup\partial_-\V''=\partial_-\V$ by Lemma 1.3 of \cite{ScharlemannThompson1993}.
Moreover, the region between $\tilde{F}'_V$ and $\partial_+\V'$ in $\V'$ is the trivial compression body $\tilde{F}'_V\times I$ and say $\partial_-(\tilde{F}'_V\times I)=\tilde{F}'_V$.
Then $\tilde{\V}'$ is obtained from the union of $\tilde{F}'_V\times I$ and $\V''$ by attaching a $1$-handle connecting $\partial_+(\tilde{F}'_V\times I)$ and $\partial_+\V''$ whose cocore disk is $V$, i.e. $\tilde{\V}'$ is a compression body.

This completes the proof. 
\end{proof}

\begin{lemma}\label{lemma-2-21}
Let $\V$ be a compression body of genus $n\geq 3$, $V_1$ and $V_2$  compressing disks in $\V$, and $F=\partial_+\V$.
Let  $\tilde{F}_{V_1}$ and $\tilde{F}_{V_2}$ be the surfaces obtained by pushing $F_{V_1}$ and $F_{V_2}$ slightly into $\mathrm{int}(\V)$, respectively.
Then the following hold:
	\begin{enumerate}
		\item Suppose both $V_1$ and $V_2$ are nonseparating in $\V$.
		Then $\tilde{F}_{V_2}$ is isotopic to $\tilde{F}_{V_1}$ in $\V$ if and only if $V_2$ is isotopic to $V_1$ in $\V$.\label{lemma-2-21-1}
		\item Suppose $V_1$ is separating and $V_2$ is nonseparating in $\V$.
		Then $\tilde{F}_{V_2}$ is isotopic to a component of $\tilde{F}_{V_1}$ in $\V$ if and only if $V_1$ cuts off a solid torus from $\V$ and $V_2$ is isotopic to a meridian disk of the solid torus in $\V$.\label{lemma-2-21-2} 
	\end{enumerate}
\end{lemma}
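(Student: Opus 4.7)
The plan is to prove each direction of the two biconditionals using Lemma \ref{lemma-2-19} (and implicitly Lemma \ref{lemma-2-20}) as the main structural input, applying Lemma \ref{lemma-region} to identify the compression body between $F$ and each push-off $\tilde{F}_{V_i}$, and using Corollary \ref{corollary-isotopic-bd} to promote isotopies of disks inside such a sub-compression body to isotopies in $\V$.

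For the ``if'' direction of (1), an ambient isotopy of $\V$ carrying $V_1$ to $V_2$ also takes $F_{V_1}$ to $F_{V_2}$, and the push-offs match. For the ``only if'' direction of (1), after an ambient isotopy arranging $\tilde{F}_{V_1}=\tilde{F}_{V_2}$ as a common surface, the compression body $\tilde{\V}'$ given by Lemma \ref{lemma-region} contains both $V_1$ and $V_2$ as nonseparating compressing disks. Since $\tilde{\V}'$ has genus $n$ with connected genus $n-1$ negative boundary, Lemma \ref{lemma-2-19} gives $V_1$ isotopic to $V_2$ in $\tilde{\V}'$, and Corollary \ref{corollary-isotopic-bd} extends the isotopy to $\V$.

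For the ``if'' direction of (2), assume $V_1$ cuts off a solid torus $T$ with meridian $V_2$. Let $C$ be the genus $n-1$ component of $\tilde{F}_{V_1}$, and let $R$ be the closure of the component of $\V \setminus C$ meeting $F$. Then $R = \tilde{\V}_1' \cup T$ with $\tilde{\V}_1'$ from Lemma \ref{lemma-region} applied to $V_1$, and a short handle argument identifies $R$ as a genus $n$ compression body with connected genus $n-1$ negative boundary $C$. The meridian $V_2$ lies in $T \subset R$ as a nonseparating compressing disk, so Lemma \ref{lemma-2-19} identifies it up to isotopy with the cocore of the unique $1$-handle of $R$; compressing $\partial_+ R = F$ along $V_2$ then collapses this $1$-handle, producing a surface isotopic in $R$ (hence in $\V$) to $\partial_- R = C$.

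The main obstacle is the ``only if'' direction of (2). After ambient isotopy we may assume $\tilde{F}_{V_2}$ coincides with a component $C$ of $\tilde{F}_{V_1}$; a genus count forces $C$ to have genus $n-1$, and the other component $T'$ of $\tilde{F}_{V_1}$ is a torus. Applying Lemma \ref{lemma-region} separately to $V_1$ and to $V_2$ yields a genus $n$ compression body $\tilde{\V}_1'$ with $\partial_- = T' \cup C$, together with a genus $n$ compression body $\tilde{\V}_2'$ with connected $\partial_- = C$, and they are related in $\V$ by $\tilde{\V}_2' = \tilde{\V}_1' \cup T_{\mathrm{in}}$, where $T_{\mathrm{in}}$ is the closure of the component of $\V \setminus \tilde{F}_{V_1}$ bounded by $T'$ on the far side from $F$. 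Connectedness of $\partial_- \tilde{\V}_2'$ forces $T_{\mathrm{in}}$ to contribute nothing to $\partial_-\V$, so $T_{\mathrm{in}}$ is a genus $1$ compression body with empty negative boundary, i.e., a solid torus $T$. A meridian disk of $T$, pushed through $\tilde{\V}_1'$ so that its boundary lies on $F$, is then a nonseparating compressing disk of $\tilde{\V}_2'$; by Lemma \ref{lemma-2-19} it is isotopic in $\tilde{\V}_2'$ (and by Corollary \ref{corollary-isotopic-bd} in $\V$) to $V_2$. The key technical step is verifying the identification $\tilde{\V}_2' = \tilde{\V}_1' \cup T_{\mathrm{in}}$ and ruling out $T_{\mathrm{in}} \cong T^2 \times I$ via connectedness of $\partial_- \tilde{\V}_2'$.
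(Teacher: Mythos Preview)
Your proof is correct and uses the same structural toolkit as the paper (Lemma \ref{lemma-region}, Lemma \ref{lemma-2-19}, Corollary \ref{corollary-isotopic-bd}), but the execution differs in part (2).

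For the $(\Leftarrow)$ direction of (2), the paper constructs the isotopy geometrically by hand: it realizes the isotopy $\tilde F_{V_2}\to \tilde F'_{V_1}$ as a sequence of three ambient isotopies, the last of which pushes through a $3$-ball bounded by a subdisk of $V_1$ and a subdisk of the (already thinned) $\tilde F_{V_2}$. Your argument, by contrast, recognizes the region $R$ between $F$ and the genus $n-1$ component $C$ as a compression body with $\partial_- R = C$ and then uses that compressing $\partial_+ R$ along its unique nonseparating disk yields $\partial_- R$. This is more conceptual and avoids the picture entirely.

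For the $(\Rightarrow)$ direction of (2), the paper's argument is more economical than yours. After the isotopy, the paper simply observes that \emph{both} $V_1$ and $h_1(V_2)$ sit inside the compression body $\tilde\V$ (your $\tilde\V_2'$), which has connected genus $n-1$ negative boundary; then a single application of the full statement of Lemma \ref{lemma-2-19} finishes: the separating disk $V_1$ must cut off a solid torus from $\tilde\V$, and the nonseparating disk $h_1(V_2)$ must be its meridian. The solid torus that $V_1$ cuts off from $\tilde\V$ coincides with the one it cuts off from $\V$ (since $\tilde F'_{V_1}$ lies on the other side of $V_1$), so the conclusion follows. Your decomposition $\tilde\V_2' = \tilde\V_1'\cup T_{\mathrm{in}}$ and the connectedness argument for $\partial_-\tilde\V_2'$ recover the same solid torus, but this detour is unnecessary once you notice that $V_1$ itself lies in $\tilde\V_2'$ and that Lemma \ref{lemma-2-19} already controls separating disks there.
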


\begin{proof}
\begin{enumerate}
	\item Suppose both $V_1$ and $V_2$ are nonseparating in $\V$.
	
	$(\Leftarrow)$ 
	We can see the region $\tilde{\V}$ cobounded by $F_{V_2}$ and $\tilde{F}_{V_2}$ is a product and we will observe the images of $\tilde{\V}$ of isotopies defined on $\V$.
	Assume $V_2$ is isotopic to $V_1$ in $\V$ by an isotopy $h_t:\V\to\V$, $t\in[0,1]$, i.e. $h_1(V_2)=V_1$.
	After the isotopy $h_t$, we push off $h_1(\tilde{F}_{V_2})$ into a small neighborhood of $F\cup h_1(V_2)$ in $\V$ preserving $h_1(V_2)$ by an isotopy $g_t$ defined on $\V$  (we can refer to (b) and (c) of Figure \ref{fig-2-10-2} ignoring the separating $V_1$ in the figure).
	Then we can assume $g_1(h_1(\tilde{F}_{V_2}))$ is $\tilde{F}_{V_1}$, leading to the result.
	
	$(\Rightarrow)$ 
	Assume $\tilde{F}_{V_2}$ is isotopic to $\tilde{F}_{V_1}$ in $\V$ by an isotopy $h_t:\V\to\V$, $t\in[0,1]$.
	Let $\tilde{\V}$ be the closure of the component of $\V-\tilde{F}_{V_1}$ intersecting $F$.
	Then $\tilde{\V}$ is a genus $n$ compression body with negative boundary consisting of a genus $(n-1)$ surface by Lemma \ref{lemma-region}, where $\partial_+\tilde{\V}=\partial_+\V$.
	Here, (i) $h_1(V_2)$ and $V_1$ are properly embedded in $\tilde{\V}$ and (ii) $\partial h_1(V_2)$ and $\partial V_1$ are nonseparating curves in $F=\partial_+\tilde{\V}$, i.e. both $h_1(V_2)$ and $V_1$ are nonseparating compressing disks in $\tilde{\V}$.
	This means $h_1(V_2)$ is isotopic to $V_1$ in $\tilde{\V}$ by Lemma \ref{lemma-2-19} and therefore $h_1(V_2)$ is isotopic to $V_1$ in $\V$ by an isotopy $g_t$ defined on $\V$ by Corollary \ref{corollary-isotopic-bd}.
	Hence, the sequence of isotopies consisting of $h_t$ and $g_t$ realizes the isotopy from $V_2$ to $V_1$ in $\V$.
	\item Suppose $V_1$ is separating and $V_2$ is nonseparating in $\V$.
	
	$(\Leftarrow)$ 
	Let $\tilde{\V}$ be as in the ($\Leftarrow$) part of the proof of (\ref{lemma-2-21-1}) (see (a) of Figure \ref{fig-2-10-2}).
	\begin{figure}
	\includegraphics[width=12cm]{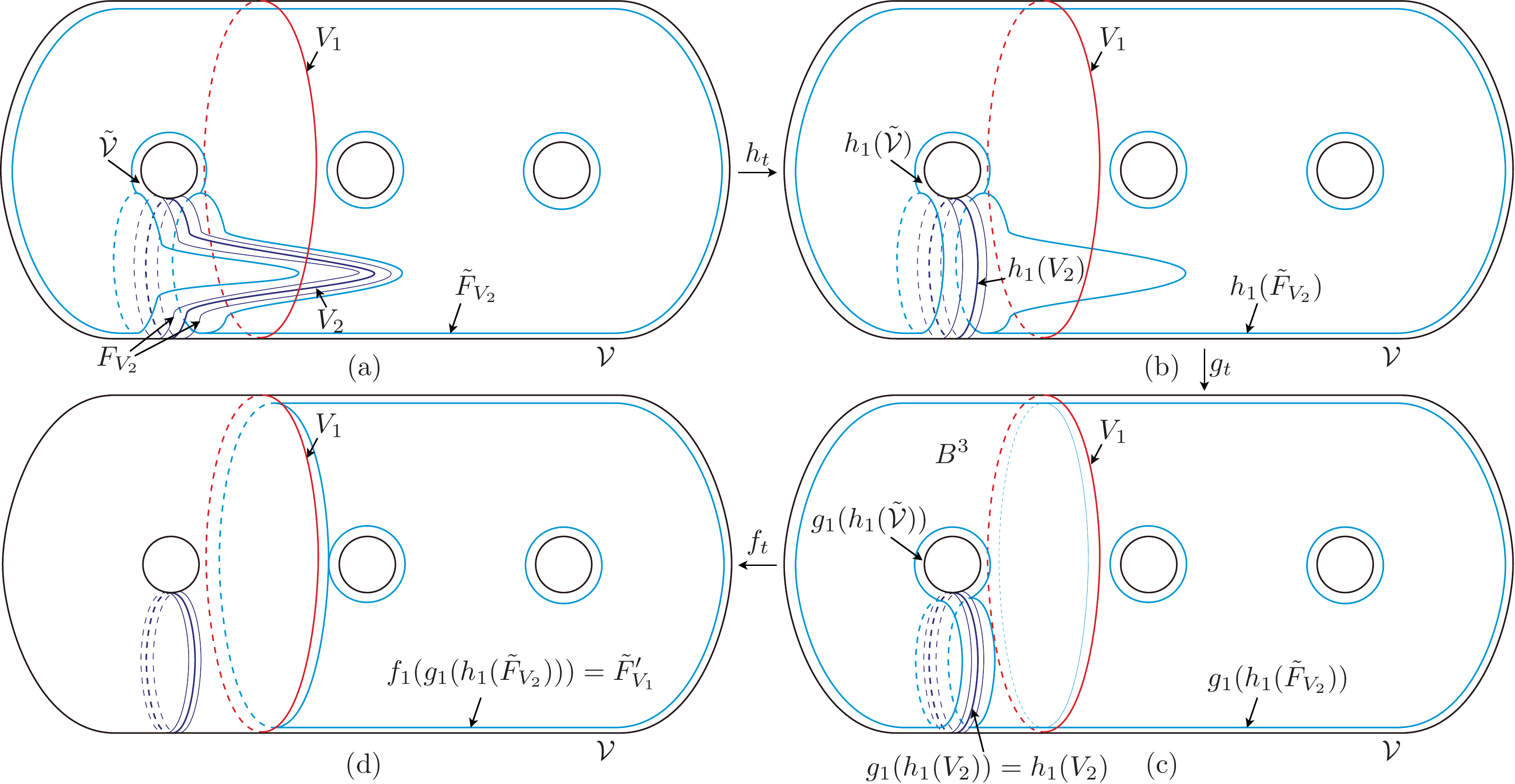}
	\caption{the isotopy taking $\tilde{F}_{V_2}$ into $\tilde{F}_{V_1}'$. \label{fig-2-10-2}}
	\end{figure}
	Assume $V_1$ cuts off a solid torus from $\V$ and there is an isotopy $h_t:\V\to\V$, $t\in[0,1]$ such that $h_1(V_2)$ is a meridian disk of the solid torus.
	Then we can also assume $h_1(V_2)$ misses $V_1$ without loss of generality (see (b) of Figure \ref{fig-2-10-2}).
	Using an additional isotopy $g_t:\V\to\V$, $t\in [0,1]$ such that it preserves $h_1(V_2)$ and pushes $h_1(\tilde{F}_{V_2})$ into a small neighborhood of $F\cup h_1(V_2)$ in $\V$, we can assume $g_1(h_1(\tilde{\V}))$ is a sufficiently thin product (see (c) of Figure \ref{fig-2-10-2}).
	Let $\tilde{F}'_{V_1}$ be the genus $(n-1)$-component of $\tilde{F}_{V_1}$.
	Then we can see $g_1(h_1(\tilde{F}_{V_2}))$ is isotopic to $\tilde{F}_{V_1}'$ in $\V$ by an isotopy $f_t$ defined on $\V$, where the reason is that there is a $3$-ball in $\V$ bounded by the $S^2$ consisting of a disk in  $\mathrm{int}(V_1)$ and a disk in  $g_1(h_1(\tilde{F}_{V_2}))$ such that these two disks share the common boundary and therefore we can push $g_1(h_1(\tilde{F}_{V_2}))$ into the interior of the component of $\V-V_1$ containing $\tilde{F}'_{V_1}$ through the $3$-ball (see (c) and (d) of Figure \ref{fig-2-10-2}).
	Hence, the sequence of isotopies consisting of $h_t$, $g_t$ and $f_t$ realizes the isotopy from $\tilde{F}_{V_2}$ to $\tilde{F}_{V_1}'$ in $\V$.
	
	$(\Rightarrow)$
	Let $\tilde{\V}'$ be the region cobounded by $F$ and $\tilde{F}_{V_2}$ in $\V$.
	Then $\tilde{\V}'$ is a genus $n$ compression body with negative boundary consisting of the genus $(n-1)$ surface $\tilde{F}_{V_2}$ by Lemma \ref{lemma-region}, where $\partial_+\tilde{\V}'=\partial_+\V$.
		Assume there is an isotopy $h_t:\V\to\V$, $t\in[0,1]$ such that $h_1(\tilde{F}_{V_2})=\tilde{F}_{V_1}'$ for a component $\tilde{F}'_{V_1}$ of $\tilde{F}_{V_1}$.
	Then $\tilde{\V}=h_1(\tilde{\V}')$ is also a compression body homeomorphic to $\tilde{\V}'$, where it is the region cobounded by $F$ and $\tilde{F}_{V_1}'$ in $\V$.
	Therefore, the genus of $\partial_-\tilde{\V}=\tilde{F}_{V_1}'$ must be $(n-1)$.
	Moreover, considering $\partial V_1$ and $\partial h_1(V_2)$  in $F=\partial_+\tilde{\V}$, $V_1$ is a separating compressing disk and $h_1(V_2)$ is a nonseparating compressing disk in $\tilde{\V}$.
	This means $h_1(V_2)$ is isotopic to a meridian disk $D$ of the solid torus that $V_1$ cuts off from $\tilde{\V}$ in $\tilde{\V}$ by Lemma \ref{lemma-2-19}.
	Therefore, $h_1(V_2)$ is isotopic to $D$ in  $\V$ by an isotopy $g_t$ defined on $\V$ by Corollary \ref{corollary-isotopic-bd}.
	Hence, the sequence of isotopies consisting of $h_t$ and $g_t$ realizes the isotopy from $V_2$ to $D$ in $\V$.
	\end{enumerate}
\end{proof}

\begin{lemma}\label{lemma-2-22-pre}
Let $\V$ be a compression body of genus $n\geq 2$ and  $V_1$ and $V_2$  separating compressing disks in $\V$ such that the geometric intersection number $i(\partial V_1,\partial V_2)=0$ in $\partial_+\V$.
If each of $V_1$ and $V_2$ cuts off $S\times I$ from $\V$ for a component $S$ of $\partial_-\V$ such that $S$ is the common $0$-level, then $V_1$ is isotopic to $V_2$ in $\V$.
\end{lemma}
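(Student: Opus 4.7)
The plan is to reduce the statement to showing that $\partial V_1$ and $\partial V_2$ are isotopic on $\partial_+\V$, after which Lemma~\ref{lemma-isotopic-bd} immediately gives the desired conclusion. Since $i(\partial V_1,\partial V_2)=0$, I first isotope $V_1,V_2$ so that $\partial V_1\cap\partial V_2=\emptyset$. The intersection $V_1\cap V_2$ then consists only of simple closed curves in the interior, and standard innermost-disk arguments together with irreducibility of the compression body $\V$ allow the removal of all such curves by ambient isotopy. So we may assume $V_1\cap V_2=\emptyset$.

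Write $\V_i^-$ for the product region $S\times I$ cut off by $V_i$ and $\V_i^+$ for the other component of $\V\setminus V_i$, for $i=1,2$. Since $V_1\cap V_2=\emptyset$, each $V_i$ lies in exactly one of $\overline{\V_j^-}$ or $\overline{\V_j^+}$ for $\{i,j\}=\{1,2\}$. The configuration $V_1\subset\overline{\V_2^+}$ and $V_2\subset\overline{\V_1^+}$ cannot occur: both $\V_1^-$ and $\V_2^-$ contain a common collar neighborhood of $S$ in $\V$, so $\V_1^-\cap\V_2^-\neq\emptyset$, and together with $\V_1^-\cap V_2=\emptyset$ and connectedness of $\V_1^-$ this forces $\V_1^-\subset\V_2^-$; taking closures then gives $V_1\subset\overline{\V_1^-}\subset\overline{\V_2^-}$, which combined with $\operatorname{int}(V_1)\subset\V_2^+$ contradicts $\overline{\V_2^-}\cap\V_2^+=\emptyset$.

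Hence, after possibly swapping the roles of $V_1$ and $V_2$, we may assume $V_1\subset\overline{\V_2^-}\cong S\times I$. As the trivial compression body $\overline{\V_2^-}$ contains no essential properly embedded disk, $V_1$ is boundary-parallel in $\overline{\V_2^-}$; that is, $\partial V_1$ bounds a disk $D$ in $\partial_+\overline{\V_2^-}=(F\cap\overline{\V_2^-})\cup V_2$. If $D\subset F\cap\overline{\V_2^-}$, then $\partial V_1$ would be inessential in $F$, contradicting that $V_1$ is a compressing disk for $F$. Hence $D\supset V_2$, and $D\setminus\operatorname{int}(V_2)$ is then an annulus in $F$ cobounded by $\partial V_1$ and $\partial V_2$. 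Therefore $\partial V_1$ is isotopic to $\partial V_2$ on $\partial_+\V$, and Lemma~\ref{lemma-isotopic-bd} yields that $V_1$ is isotopic to $V_2$ in $\V$.

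The step I expect to require the most care is ruling out the ``both-on-the-non-product-side-of-each-other'' configuration above; once this is excluded, the product structure of $S\times I$ together with Lemma~\ref{lemma-isotopic-bd} do the rest of the work.
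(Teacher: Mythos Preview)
Your proof is correct and follows essentially the same strategy as the paper's: reduce to $V_1\cap V_2=\emptyset$, use that both product pieces contain $S$ to force one piece inside the other, then deduce that $\partial V_1$ and $\partial V_2$ cobound an annulus in $\partial_+\V$ and invoke Lemma~\ref{lemma-isotopic-bd}. The only cosmetic differences are that the paper removes interior circles of $V_1\cap V_2$ by replacing $V_1$ with a new disk having the same boundary (then citing Lemma~\ref{lemma-isotopic-bd}) rather than by direct ambient isotopy, and that the paper phrases the final step in terms of the once-punctured surfaces $F_1\subset F_2$ having equal genus, whereas you phrase it via the disk $D\subset S\times\{1\}$ necessarily containing $V_2$; these are two views of the same observation.
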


\begin{proof}
Since the geometric intersection number $i(\partial V_1,\partial V_2)=0$ in $\partial_+\V$, moving $V_1$ near $\partial_+\V$ continuously, there is an isotopy $h_t:\V\to\V$, $t\in[0,1]$ such that $\partial h_1(V_1)\cap \partial V_2=\emptyset$.
Moreover, we can assume that $h_1(V_1)$ intersects $V_2$ transversely and therefore each component of $h_1(V_1)\cap V_2$ is an inessential loop in both $\mathrm{int}(h_1(V_1))$ and $\mathrm{int}(V_2)$.
This means we can find a compressing disk $V$ in $\V$ such that $\partial V=\partial h_1(V_1)$ and $V\cap V_2=\emptyset$ by using innermost disk arguments (see the proof of Claim 1 of Lemma 3.1.5 of \cite{SaitoScharlemannSchultens2005}).
Therefore, Lemma \ref{lemma-isotopic-bd} implies $h_1(V_1)$ is isotopic to $V$ in $\V$ by an isotopy $g_t$ defined on $\V$, i.e. $g_1(h_1(V_1))\cap V_2=\emptyset$.
Let $\V_1$ and $\V_2$ be the $S\times I$s such that $g_1(h_1(V_1))$ and $V_2$ cut off from $\V$, respectively, guaranteed by the assumption of this lemma.
Considering the $1$-level of $\V_1$ (resp $\V_2$), it consists of a once-punctured surface $F_1$ (resp $F_2$) in $\partial_+\V$ and $g_1(h_1(V_1))$ (resp $V_2$), where $F_1\cap g_1(h_1(V_1))=\partial F_1=\partial g_1(h_1(V_1))$, $F_2\cap V_2=\partial F_2=\partial V_2$.
Since $\V_1$ shares the $0$-level $S$ with $\V_2$, the assumption $g_1(h_1(V_1))\cap V_2=\emptyset$ implies one of $\V_1$ and $\V_2$ contains the other.
Assume $\V_1\subset \V_2$ without loss of generality, i.e. $F_1\subset F_2$.
Since $g(F_1)=g(F_2)=g(S)\geq 1$, $\partial F_1$ cuts off a uniquely determined planar surface from $F_2$.
Moreover, $g_1(h_1(V_1))$ is a compressing disk in $\V$ and therefore $\partial F_1=\partial g_1(h_1(V_1))$ cannot bound a disk in $F_2$.
This means $\partial F_1$ cuts off an annulus from $F_2$, i.e. $\partial F_1$ $(=\partial g_1(h_1(V_1)))$ is isotopic to $\partial F_2$ $(=\partial V_2)$ in $F_2\subset F$.
Therefore, $g_1(h_1(V_1))$ is isotopic to $V_2$ in $\V$ by an isotopy $f_t:\V\to\V$, $t\in[0,1]$ by Lemma \ref{lemma-isotopic-bd}.
Hence, the sequence of isotopies consisting of $h_t$, $g_t$ and $f_t$ realizes the isotopy from $V_1$ to $V_2$ in $\V$.

This completes the proof.
\end{proof}

\begin{lemma}\label{lemma-2-22}
Let $\V$ be a compression body of genus $n\geq 3$, $V_1$ and $V_2$  mutually disjoint separating compressing disks in $\V$, and $F=\partial_+\V$.
Let $\tilde{F}_{V_1}$ and $\tilde{F}_{V_2}$ be the surfaces obtained by pushing $F_{V_1}$ and $F_{V_2}$ slightly into $\mathrm{int}(\V)$.
Then at least one component of $\tilde{F}_{V_2}$ is isotopic to a component of $\tilde{F}_{V_1}$ in $\V$ if and only if $V_2$ is isotopic to $V_1$ in $\V$.
\end{lemma}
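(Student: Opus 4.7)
The plan is to handle the two directions separately, using Lemma~\ref{lemma-2-22-pre} for the harder $(\Rightarrow)$ direction. For $(\Leftarrow)$, adapt the argument of Lemma~\ref{lemma-2-21}(\ref{lemma-2-21-1}): given an ambient isotopy $h_t\colon\V\to\V$ with $h_1(V_2)=V_1$, we may assume, after an adjustment supported in a collar of $\partial\V$, that $h_t|_{\partial\V}=\mathrm{id}$ for all $t$. Then $h_1(F)=F$ and $h_1(F_{V_2})=F_{V_1}$ componentwise, and a further small isotopy supported in a collar of $F_{V_1}$ aligns the push-offs so that each component of $\tilde F_{V_2}$ is carried onto the corresponding component of $\tilde F_{V_1}$.

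For $(\Rightarrow)$, suppose a component $\tilde F_{V_1}^a$ of $\tilde F_{V_1}$ is isotopic to a component $\tilde F_{V_2}^a$ of $\tilde F_{V_2}$ in $\V$ via an ambient isotopy $h_t$, arranged so that $h_t|_{\partial\V}=\mathrm{id}$; let $\V_i^a$ denote the side of $V_i$ with $\partial_+\V_i^a=F_{V_i}^a$ and $\V_i^b$ the other side. By Lemma~\ref{lemma-region}, the closure $\tilde\V_i^a$ of the component of $\V-\tilde F_{V_i}^a$ intersecting $F$ is a compression body with $\partial_+\tilde\V_i^a=F$; concretely, $\tilde\V_i^a=C_i\cup V_i\cup\V_i^b$ where $C_i\subset\V_i^a$ is the product collar between $F_{V_i}^a$ and $\tilde F_{V_i}^a$. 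Hence $V_i$ is a separating compressing disk in $\tilde\V_i^a$ that cuts off $C_i\cong\tilde F_{V_i}^a\times I$, with $\tilde F_{V_i}^a$ (a component of $\partial_-\tilde\V_i^a$) as the $0$-level.

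Because $h_1(F)=F$ and $h_1(\tilde F_{V_2}^a)=\tilde F_{V_1}^a$, we have $h_1(\tilde\V_2^a)=\tilde\V_1^a$, so $h_1(V_2)\subset\tilde\V_1^a$ is a separating compressing disk cutting off the product $h_1(C_2)\cong\tilde F_{V_1}^a\times I$ with the \emph{same} $0$-level $\tilde F_{V_1}^a$. Since $h_t|_F=\mathrm{id}$, $\partial h_1(V_2)=\partial V_2$; combined with $V_1\cap V_2=\emptyset$ this gives $i(\partial V_1,\partial h_1(V_2))=0$ in $F=\partial_+\tilde\V_1^a$. Lemma~\ref{lemma-2-22-pre} applied to $\tilde\V_1^a$ with $S=\tilde F_{V_1}^a$ then produces an isotopy from $V_1$ to $h_1(V_2)$ in $\tilde\V_1^a$; Corollary~\ref{corollary-isotopic-bd} promotes this to an isotopy in $\V$, and composing with the reverse of $h_t$ yields $V_1$ isotopic to $V_2$ in $\V$. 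The main obstacle is the structural verification that $V_1$ and $h_1(V_2)$ cut off trivial products with the same boundary component $\tilde F_{V_1}^a$ of $\partial_-\tilde\V_1^a$ as the $0$-level, which is the precise condition needed to apply Lemma~\ref{lemma-2-22-pre}.
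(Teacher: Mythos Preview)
Your proof is correct and follows essentially the same approach as the paper's: both directions are handled as in Lemma~\ref{lemma-2-21}(\ref{lemma-2-21-1}) for $(\Leftarrow)$, and for $(\Rightarrow)$ both arguments pass to the compression body $\tilde\V$ on the $F$-side of $\tilde F_{V_1}^a$, verify that $V_1$ and $h_1(V_2)$ each cut off an $S\times I$ with common $0$-level $S=\tilde F_{V_1}^a$, invoke Lemma~\ref{lemma-2-22-pre}, and promote via Corollary~\ref{corollary-isotopic-bd}. The only cosmetic difference is that you normalize $h_t|_{\partial\V}=\mathrm{id}$ to get $\partial h_1(V_2)=\partial V_2$ directly, whereas the paper leaves $h_t$ arbitrary and uses that $h_1|_F$ is isotopic to the identity to conclude $i(\partial V_1,\partial h_1(V_2))=i(\partial V_1,\partial V_2)=0$.
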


\begin{proof}
($\Leftarrow$) Assume that $V_2$ is isotopic to $V_1$ in $\V$.
Then considering the argument in the $(\Leftarrow)$ part of the proof of (\ref{lemma-2-21-1}) of Lemma \ref{lemma-2-21}, we conclude $\tilde{F}_{V_2}$ is isotopic to $\tilde{F}_{V_1}$ in $\V$.

($\Rightarrow$)
Let $\tilde{F}_{V_1}=\tilde{F}'_{V_1}\cup\tilde{F}''_{V_1}$ and $\tilde{F}_{V_2}=\tilde{F}'_{V_2}\cup\tilde{F}''_{V_2}$ by denoting each $\tilde{F}_{V_i}$ as the union of two components for $i=1,2$.
Assume $\tilde{F}'_{V_2}$ is isotopic to $\tilde{F}'_{V_1}$ by an isotopy $h_t:\V\to\V$, $t\in[0,1]$.
Let $\tilde{\V}$ be the closure of the component of $\V-\tilde{F}'_{V_1}$ intersecting $F$.
Then $\tilde{\V}$ is a genus $n$ compression body by Lemma \ref{lemma-region}, where $\partial_+\tilde{\V}=\partial_+\V$, and $V_1$ and $h_1(V_2)$ are compressing disks in $\tilde{\V}$.
By the assumption that $V_1\cap V_2=\emptyset$, the geometric intersection number $i(\partial V_1,\partial h_1(V_2))=0$ in $F$.
Here, we can see the product cobounded by $\tilde{F}'_{V_2}$, $V_2$ and  a  subsurface $F_0\subset F$ in $\V$ has been isotoped into the region cobounded by $\tilde{F}'_{V_1}$, $h_1(V_2)$ and $h_1(F_0)\subset F$ in $\V$ and therefore the latter region is also homeomorphic to a product.
Since $\partial_+\tilde{\V} = \partial_+\V=F$, $\tilde{F}_{V_1}'\subset\partial_-\tilde{\V}$, and each of $V_1$ and $h_1(V_2)$ cuts off $\tilde{F}_{V_1}'\times I$ from $\tilde{\V}$ such that the one corresponding to $V_1$ shares the common $0$-level $\tilde{F}_{V_1}'$ with that corresponding to $h_1(V_2)$, Lemma \ref{lemma-2-22-pre} implies $h_1(V_2)$ is isotopic to $V_1$ in $\tilde{\V}$.
Therefore, $h_1(V_2)$ is isotopic to $V_1$ by an isotopy $g_t:\V\to\V$, $t\in[0,1]$ by Corollary \ref{corollary-isotopic-bd}.
Hence, the sequence of isotopies consisting of $h_t$ and $g_t$ realizes the isotopy from $V_2$ to $V_1$ in $\V$.

This completes the proof.
\end{proof}

\begin{lemma}\label{lemma-pre}
Let $F$ be a closed surface embedded in a $3$-manifold.
Let $V$ and $W$ be mutually disjoint compressing disks for $F$ such that $\partial V$ is separating and $\partial W$ is nonseparating in $F$.
Then $\partial W$ is nonseparating in the relevant component of $F_V$.
\end{lemma}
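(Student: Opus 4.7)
The plan is to argue by contradiction (equivalently, prove the contrapositive). Since $V$ and $W$ are disjoint, $\partial V \cap \partial W = \emptyset$, so the curve $\partial W$ lies in exactly one of the two components of $F \setminus \partial V$ (two because $\partial V$ separates). Compressing $F$ along $V$ replaces a regular neighborhood of $\partial V$ in $F$ by two scar disks, one capping the boundary circle on each side; hence $F_V$ has exactly two components. Let $F_0$ denote the component containing $\partial W$ (this is the ``relevant component''), and let $F_1$ denote the other component. Each component of $F_V$ contains exactly one scar of $V$.

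Now suppose for contradiction that $\partial W$ is separating in $F_0$, so that $F_0 \setminus \partial W$ has two components; let $A$ and $B$ be their closures in $F_0$, meeting along $\partial W$. Because $W \cap V = \emptyset$, the curve $\partial W$ is disjoint from the scar disk of $V$ in $F_0$, and since the scar is connected, it lies entirely in $A$ or entirely in $B$; say it lies in $A$. To recover $F$ from $F_V$, one reverses the compression: remove the interiors of both scars and glue the two resulting boundary circles by the annulus $\partial V \times I$. Under this reconstruction, the component $F_1$ is attached to $F_0$ only through the annulus, whose end on the $F_0$ side is glued along a circle in $A$. Consequently $F_1$ and $A$ together lie on one side of $\partial W$ in $F$, while $B$ lies entirely on the other side, with $B \cap \bigl((A \setminus \mathrm{int}(\text{scar})) \cup (\partial V \times I) \cup F_1\bigr) = \partial W$. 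Thus $\partial W$ separates $F$, contradicting the hypothesis that $\partial W$ is nonseparating in $F$.

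The entire argument is a cut-and-paste bookkeeping: the only thing to verify carefully is that $F_1$ attaches to $F_0$ only along the single scar circle inside $A$, which is immediate from the definition of compression. There is no real obstacle; the statement is essentially a direct consequence of the separating hypothesis on $\partial V$ together with the fact that each component of $F_V$ contains exactly one scar.
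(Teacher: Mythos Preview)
Your proof is correct and follows essentially the same approach as the paper: argue by contradiction, observe that the relevant component of $F_V$ contains exactly one scar of $V$, so if $\partial W$ were separating there, the piece without the scar would be cut off from $F$ by $\partial W$, contradicting the nonseparating hypothesis. The paper's version is more terse, simply noting that the scar-free component is cut off from $F$, whereas you spell out the annulus reattachment explicitly; but the content is identical.
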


\begin{proof}
Let $F'_V$ be the component of $F_V$ containing $\partial W$.
For the sake of contradiction, assume $\partial W$ is separating in $F'_V$.
Then one component of $F'_V-\partial W$ has a scar of $V$ and the other doesn't because there is only one scar of $V$ in $F'_V$ by the assumption that $\partial V$ is separating in $F$.
This means $\partial W$ cuts off the latter component from $F$, violating the assumption that $\partial W$ is nonseparating in $F$.
This completes the proof.
\end{proof}

\section{the proof of Theorem \ref{theorem-main}}

Lemma \ref{lemma-five-GHSs} characterizes the possible GHSs obtained by weak reductions from $(\V,\W;F)$ into five types, where the five types depend on the shapes of the compression bodies  intersecting the inner thin level.

\begin{lemma}[the generalization of Appendix of \cite{JungsooKim2014}]\label{lemma-five-GHSs}
Suppose $M$ is an irreducible $3$-manifold and $(\V,\W;F)$ is a weakly reducible, unstabilized Heegaard splitting of $M$ of genus $n\geq 3$.
Let $\mathbf{H}$ be the GHS obtained by weak reduction along a weak reducing pair $(V,W)$ from $(\V,\W;F)$.
Then $\mathbf{H}$ is of the form $(\V_1,\V_2;\bar{F}_V)\cup_{\bar{F}_{VW}}(\W_1,\W_2;\bar{F}_W)$ for four compression bodies $\V_1$, $\V_2$, $\W_1$ and $\W_2$, where $\partial_-\V_2\cap \partial_-\W_1=\bar{F}_{VW}$, $\bar{F}_V\subset \V$, and $\bar{F}_W\subset \W$.
Moreover, we can classify $\mathbf{H}$ into the following five types ((\ref{lemma-3-1-b}) is divided into two types) by the shapes of $\partial_-\V_2$ and $\partial_-\W_1$ (see Figure \ref{fig-Heegaard-a}).
\begin{enumerate}[(a)]
\item Each of $\partial_-\V_2$ and $\partial_-\W_1$ is connected, where either\label{GHS-a}
	\begin{enumerate}[(i)]
	\item $V$ and $W$ are nonseparating in $\V$ and $\W$, respectively, and $\partial V\cup\partial W$ is nonseparating in $F$,\label{lemma-3-1-a-i}
	\item $V$ cuts off a handlebody $\V'$ of genus at least one from $\V$  such that $\partial W\cap\V'=\emptyset$ and $W$ is nonseparating in $\W$,\label{lemma-3-1-a-ii}
	\item $W$ cuts off a handlebody $\W'$ of genus at least one from $\W$ such that $\partial V\cap\W'=\emptyset$ and $V$ is nonseparating in $\V$, or\label{lemma-3-1-a-iii}
	\item $V$ and $W$ cut off handlebodies $\V'$ and $\W'$ of genus at least one from $\V$ and $\W$, respectively, such that $\partial V\cap \W'=\emptyset$ and $\partial W\cap\V'=\emptyset$.\label{lemma-3-1-a-iv}
	\end{enumerate}
	We call it a ``\textit{type (a) GHS}''.
\item One of $\partial_-\V_2$ and $\partial_-\W_1$ is connected and the other is disconnected, where either\label{lemma-3-1-b}
	\begin{enumerate}[(i)]
	\item ($\partial_-\V_2$ is connected)
	$V$ cuts off a compression body $\V'$ of genus at least one  from $\V$ such that $\partial_- \V'\neq\emptyset$ and $\partial W\cap\V'=\emptyset$ 	and $W$ is nonseparating in $\W$,\label{lemma-3-1-b-i}
	\item ($\partial_-\V_2$ is connected) $V$ cuts off a compression body $\V'$ of genus at least one from $\V$ and $W$ cuts off a handlebody $\W'$ of genus at least one from $\W$ such that $\partial_-\V'\neq\emptyset$, $\partial V\cap\W'=\emptyset$, and $\partial W\cap\V'=\emptyset$,\label{lemma-3-1-b-ii}
	\item ($\partial_-\W_1$ is connected) $W$ cuts off a compression body $\W'$ of genus at least one from $\W$ such that $\partial_-\W'\neq\emptyset$, $\partial V\cap\W'=\emptyset$, and $V$ is nonseparating in $\V$, or\label{lemma-3-1-b-iii}
	\item ($\partial_-\W_1$ is connected) $W$ cuts off a compression body $\W'$ of genus at least one  from $\W$ and $V$ cuts off a handlebody $\V'$ of genus at least one from $\V$ such that $\partial_-\W'\neq\emptyset$, $\partial V\cap\W'=\emptyset$, and $\partial W\cap\V'=\emptyset$.\label{lemma-3-1-b-iv}
	\end{enumerate}	
	We call it a ``\textit{type (b)-$\W$ GHS}'' for the cases (\ref{lemma-3-1-b-i}) and (\ref{lemma-3-1-b-ii}) or ``\textit{type (b)-$\V$ GHS}'' for the cases (\ref{lemma-3-1-b-iii}) and (\ref{lemma-3-1-b-iv}), respectively.
\item Each of $\partial_-\V_2$ and $\partial_-\W_1$ is disconnected but $\partial_-\V_2\cap \partial_-\W_1$ is connected, where $V$ and $W$ cut off  compression bodies $\V'$ and $\W'$ of genus at least one from $\V$ and $\W$, respectively, such that $\partial_-\V'\neq\emptyset$, $\partial_-\W'\neq\emptyset$, $\partial V\cap\W'=\emptyset$, and $\partial W\cap\V'=\emptyset$.\label{lemma-3-1-c}
We call it a ``\textit{type (c) GHS}''.
\item Each of $\partial_-\V_2$ and $\partial_-\W_1$ consists of two components and $\partial_-\V_2=\partial_-\W_1$, where $V$ and $W$ are nonseparating in $\V$ and $\W$, respectively, and $\partial V\cup\partial W$ is separating in $F$.\label{lemma-3-1-d}
We call it a ``\textit{type (d) GHS}''.
\end{enumerate}
Note that $\bar{F}_{VW}$ is disconnected only for type (d) GHS and connected otherwise.
This means if $\partial_-\V_2$ (resp $\partial_-\W_1$) is disconnected, then $\partial_-\V_2\cap\partial M\neq\emptyset$ (resp $\partial_-\W_1\cap\partial M\neq\emptyset$), excluding the case (\ref{lemma-3-1-d}).
\end{lemma}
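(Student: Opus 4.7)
The plan is to argue by a systematic case analysis on the topological types of the compressing disks $V$ in $\V$ and $W$ in $\W$, keeping careful track of the effect of cleaning (Definition~\ref{definition-WR}). First I would describe the output of preweak reduction: pushing $F_V$ slightly into $\mathrm{int}(\V)$ yields a (possibly disconnected) surface $\tilde{F}_V$ which, by Lemma~\ref{lemma-region}, splits $\V$ into compression bodies. An analogous splitting on the $\W$ side, together with a placement of $F_{VW}$ between them, produces the preliminary GHS $\mathbf{G}'$ of Definition~\ref{definition-WR}. The four compression body pieces play the role of $\V_1, \V_2, \W_1, \W_2$, each with positive boundary on $\tilde{F}_V$ or $\tilde{F}_W$ and negative boundary meeting $\partial M$ and/or components of $F_{VW}$.

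Next I would invoke Lemma~1.3 of \cite{ScharlemannThompson1993} to classify how $V$ sits in $\V$: either $V$ is nonseparating, or $V$ is separating and cuts off a second compression body $\V'$. In the separating case, $\V'$ is either a handlebody or has $\partial_-\V'\neq\emptyset$. The same dichotomy applies to $W$ in $\W$. When both $V$ and $W$ are nonseparating, I would further split according to whether $\partial V \cup \partial W$ separates $F$: the nonseparating case gives type~(a)(i) with $\bar{F}_{VW}$ connected, and the separating case gives type~(d) with $\bar{F}_{VW}$ having two components, since in the latter case compressing $F$ along a disjoint separating union of two nonseparating curves disconnects $F_{VW}$ into the two sides.

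The bulk of the work is the cleaning step. Whenever $V$ (respectively $W$) cuts off a handlebody $\V'$ (respectively $\W'$), the component of $\tilde{F}_V$ (respectively $\tilde{F}_W$) near $\partial_+\V' \cup V$ is parallel through a product region to a corresponding component of $F_{VW}$, so the cleaning procedure deletes this thick--thin pair; after cleaning, no component of the inner thin level is contributed by the handlebody side, and the surviving $\partial_-\V_2$ (or $\partial_-\W_1$) remains connected. Conversely, when $V$ cuts off a compression body $\V'$ with $\partial_-\V' \neq \emptyset$, the corresponding component of $\tilde{F}_V$ is not parallel to any inner thin level component, so no cleaning removes it; instead, $\partial_-\V_2$ inherits an extra component coming from $\partial_-\V' \subset \partial M$, producing the disconnected negative boundary of types~(b) and~(c). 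Running through every combination of the subcases and checking which ones survive the hypothesis $V \cap W = \emptyset$ with $\partial V \cap \W' = \emptyset$ and $\partial W \cap \V' = \emptyset$ yields exactly the listed configurations, together with the connectedness statement for $\bar{F}_{VW}$ in each type.

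The chief subtlety, and my expected main obstacle, lies in the separating cases where one must simultaneously track: (i) how $\partial V$ sits relative to $\partial W$ in $F$ to enforce the disjointness conditions $\partial V \cap \W' = \emptyset$ and $\partial W \cap \V' = \emptyset$; (ii) which components of $\tilde{F}_V \cup \tilde{F}_W$ bound product regions to components of $F_{VW}$ and are thereby removed by cleaning; and (iii) the no-$S^2$ constraint from Lemma~\ref{lemma-2-8}, which rules out further complications such as spurious sphere components of $F_{VW}$ carrying scars from both sides. Once these bookkeeping issues are resolved, the classification reduces to a finite combinatorial check that matches each case to exactly one of (a)(i)--(iv), (b)-$\W$(i)--(ii), (b)-$\V$(iii)--(iv), (c), or (d).
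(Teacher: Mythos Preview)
Your overall organization---a case analysis on whether $V$ and $W$ are separating or nonseparating, with a further split in the doubly nonseparating case on whether $\partial V\cup\partial W$ separates $F$---is exactly the paper's strategy (the paper phrases it as a count of the components of $F_{VW}$, which is equivalent). However, your description of the cleaning step and of which negative boundary becomes disconnected is genuinely wrong, and this would derail the classification.

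First, cleaning does not depend on whether $\V'$ is a handlebody. When $V$ is separating, write $F_V=F'_V\cup F''_V$ with $\partial W\subset F'_V$. The pushed-off copy $\tilde{F}''_V$ is \emph{always} parallel to $F''_V$ through $F''_V\times I$, so the pair $(\tilde{F}''_V,F''_V)$ is \emph{always} cleaned; this is independent of whether the piece $\V'$ cut off by $V$ on the $F''_V$ side is a handlebody or has $\partial_-\V'\neq\emptyset$. Your claim that ``when $\partial_-\V'\neq\emptyset$ the corresponding component of $\tilde{F}_V$ is not parallel to any inner thin level component'' is false.

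Second, and more seriously, you have the sides reversed. After cleaning, $\bar{F}_V=\tilde{F}'_V$ and $\bar{F}_{VW}$ lies in $F'_V$ compressed along $W$; the region $\V_2$ between them is just a thin product plus the $2$-handle for $W$, so $\partial_-\V_2=\bar{F}_{VW}$ is connected regardless of $\V'$. The piece $\V'$ (expanded to meet the scar of $V$ on $\bar{F}_{VW}$) sits on the \emph{other} side of $\bar{F}_{VW}$ and is absorbed into $\W_1$. Hence it is $\partial_-\W_1$, not $\partial_-\V_2$, that acquires the extra component $\partial_-\V'$ when $\V'$ is not a handlebody. This reversal is precisely what governs the (a)/(b)/(c) trichotomy, and with your assignment the subcases would not match the statement (for instance, in (b)(i) you would wrongly predict $\partial_-\V_2$ disconnected). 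Once you correct these two points, the remaining bookkeeping is exactly as you outline.
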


\begin{figure}
\includegraphics[width=10cm]{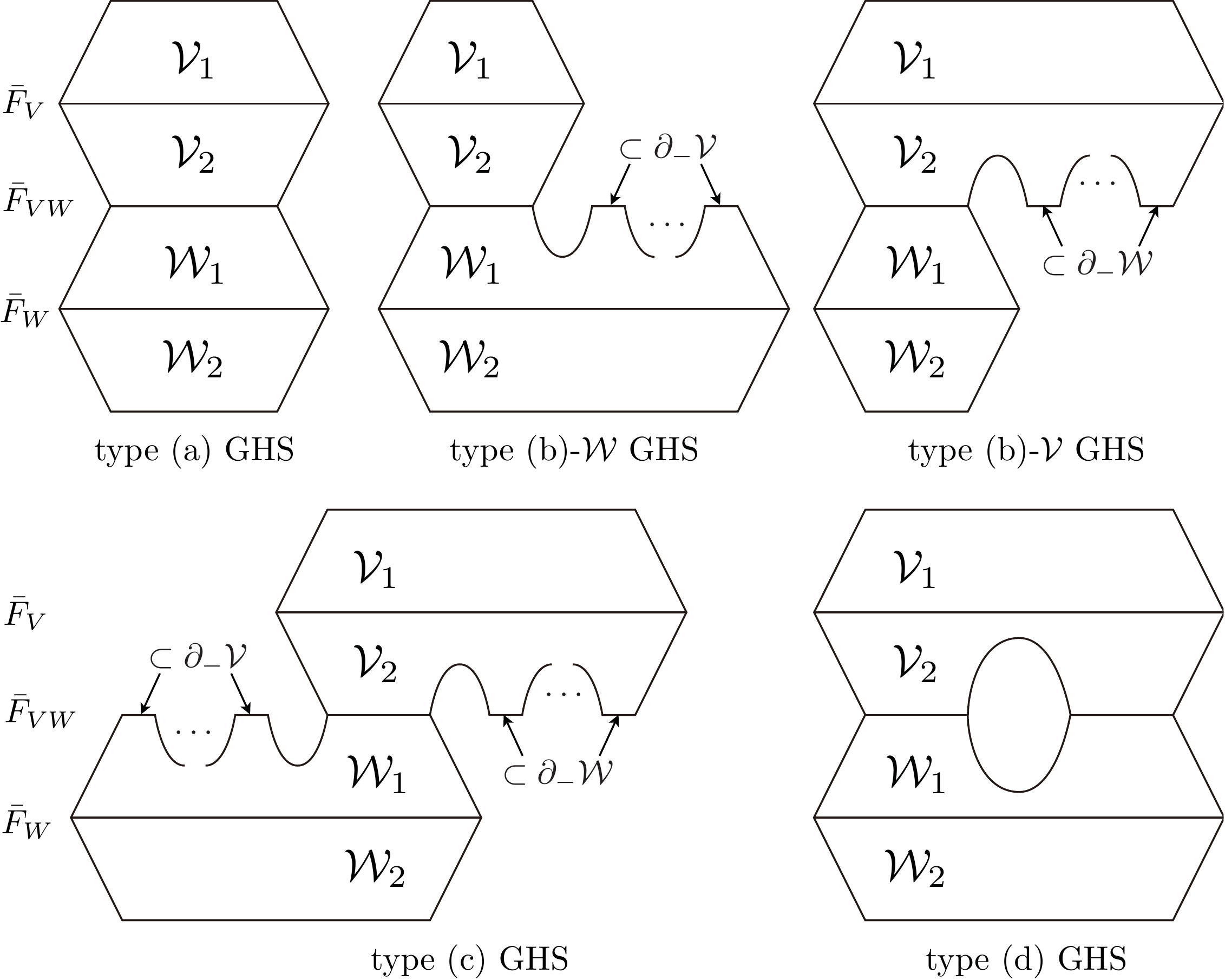}
\caption{the five types of GHSs obtained by weak reductions \label{fig-Heegaard-a}}
\end{figure}

\begin{proof}
Recall that $F_{VW}$ cannot have a $S^2$-component by Lemma \ref{lemma-2-8}.
Therefore, $\partial W$ (resp $\partial V$) is essential in the relevant component of $F_V$ (resp $F_W$).

Let $\mathbf{H}'$  be the GHS obtained by preweak reduction along $(V,W)$ from $(\V,\W;F)$.
We will denote $\Thick(\mathbf{H}')\cap \V$ and $\Thick(\mathbf{H}')\cap \W$ as $\tilde{F}_V$ and $\tilde{F}_W$, respectively, and $\Thin(\mathbf{H}')$ as $F_{VW}$.

Since $F_{VW}$ consists of at most three components, we divide the proof into three cases according to the number of components.
In each case, we will (i) do the preweak reduction along $(V,W)$ and then clean each product region in $\mathbf{H}'$, (ii) considering the remaining components of $\Thick(\mathbf{H}')$ and $\overline{\Thin}(\mathbf{H}')$ in $\mathbf{H}'$ after cleaning, confirm the thick levels $\bar{F}_V$ and $\bar{F}_{W}$ and the inner thin level $\bar{F}_{VW}$, and (iii) finally determine the shapes of the region between $\bar{F}_V$ and $\bar{F}_{VW}$ which would be $\V_2$ and that between $\bar{F}_W$ and $\bar{F}_{VW}$ which would be $\W_1$.
Note that the idea of (iii) is inspired from the proof of Theorem 3.1 of \cite{Morimoto2015}.
\\[2ex]
\Case{a} $F_{VW}$ consists of one component.

In this case, $V$ and $W$ are nonseparating in the relevant compression bodies and $\partial V\cup \partial W$ is also nonseparating in $F$.
Hence, each of $\tilde{F}_V$ and  $\tilde{F}_W$ consists of a genus $(n-1)$ surface and $F_{VW}$ consists of a genus $(n-2)$ surface.
Therefore, there is no product region in $\mathbf{H}'$ to clean, i.e. $\bar{F}_V=\tilde{F}_V$, $\bar{F}_W=\tilde{F}_W$, $\bar{F}_{VW}=F_{VW}$ and they divide $M$ into four compression bodies $\V_1$, $\V_2$, $\W_1$ and $\W_2$ with respect to the order of the surfaces.

Considering $\V_2$, it is the union of the product region $\tilde{\V}$ between $F_V$ and $\bar{F}_V$ in $\V$ and the $2$-handle corresponding to a product neighborhood of $W$ in $\W$, say $N_{\W}(W)$, where $N_{\W}(W)\cap \tilde{\V}$ is a nonseparating annulus in $F_V$ by the assumption that $\partial V\cup\partial W$ is nonseparating in $F$, i.e. $\partial_-\V_2$ is connected.
Similarly, we can use the symmetric argument for $\W_1$ and therefore $\partial_-\W_1$ is also connected.
This means $\partial_-\V_2=\partial_-\W_1=\bar{F}_{VW}$.
Therefore, we reach (\ref{lemma-3-1-a-i}) (see the first one of Figure 8 of \cite{JungsooKim2014}).\\[2ex]
\Case{b} $F_{VW}$ consists of two components.\\

\Case{b-i} $V$ is separating in $\V$ and $W$ is nonseparating in $\W$.

Let $F'_V$ be the component of $F_V$ containing $\partial W$ and $F''_V$ the other component.
Then we can assume $F''_V$ is a component of $\overline{\operatorname{Thin}}(\mathbf{H}')$ ($=F_{VW}$).
Let $\tilde{F}'_V$ and $\tilde{F}''_V$ be the components of $\operatorname{Thick}(\mathbf{H}')$ obtained by pushing $F'_V$ and $F''_V$ off into $\mathrm{int}(\V)$, respectively, and let $m\,(< n)$ be the genus of $F'_V$.
Since $\partial W$ is essential in $F'_V$ and $F_{VW}$ cannot have a $S^2$ component, we get $m\geq 2$.

Here, $\tilde{F}''_V$ and $F''_V$ cobound a product region and therefore $\tilde{F}''_V$ and $F''_V$ disappear after cleaning the relevant portion of $\mathbf{H}'$.
Hence, only $\tilde{F}'_V$ remains in $\operatorname{Thick}(\mathbf{H}')\cap \V$.

Considering Lemma \ref{lemma-pre}, $\partial W$ is nonseparating in $F'_V$ and therefore  $(F'_V)_W$ is a connected surface of genus $(m-1)$, i.e. only one component $(F'_V)_W$  remains in $\overline{\operatorname{Thin}}(\mathbf{H}')$ at this time.

Moreover, $\tilde{F}_W$ consists of a genus $(n-1)$ surface because $W$ is nonseparating in $\W$.
Therefore, considering $(m-1)<(n-1)$, the region between $(F'_V)_W$ and $\tilde{F}_W$ cannot be homeomorphic to a product.
Similarly, considering $(m-1)<m$, the region between $(F'_V)_W$ and $\tilde{F}'_V$ cannot be homeomorphic to a product.
Hence, there is no product region to clean $\mathbf{H}'$ at this moment.

Therefore, we get $\bar{F}_V=\tilde{F}'_V$, $\bar{F}_{VW}=(F'_V)_W$, and $\bar{F}_W=\tilde{F}_W$ and they divide $M$ into four compression bodies $\V_1$, $\V_2$, $\W_1$ and $\W_2$ with respect to the order of the surfaces.

Considering $\V_2$, it is the union of the product region $\tilde{\V}$ between $F'_V$ and $\tilde{F}'_V$ in $\V$ and the $2$-handle corresponding to a product neighborhood of $W$ in $\W$, say $N_{\W}(W)$, where $N_{\W}(W)\cap \tilde{\V}$ is a nonseparating annulus in $F'_V$ as we saw previously, and therefore $\partial_-\V_2$ is connected.\\

\quad \Case{b-i-1} $V$ cuts off a handlebody $\V'$ from $\V$ such that $\partial W\cap \V'=\emptyset$.\\
Let us expand $\V'$ in $\V$ until $\V'$ intersects $\bar{F}_{VW}=(F'_V)_W$ in the scar of $V$ and let $\V'_\ast$ be the resulting one.
Then we can see $\W_1$ is the union of the product region $\tilde{\W}$ between $F_W$ and $\tilde{F}_W$ in $\W$ and $\V'_\ast$ along a once-punctured surface in $F$ and therefore $\V'_\ast$ does not add another  component to $\partial_-\W_1$ other than $(F'_V)_W$ itself, i.e. $\partial_-\W_1$ is connected.
Therefore, we reach (\ref{lemma-3-1-a-ii}) (see the second one of Figure 8 of \cite{JungsooKim2014}).\\

\quad\Case{b-i-2} $V$ cuts off a compression body $\V'$ from $\V$ such that $\partial_- \V'\neq\emptyset$ and $\partial W\cap\V'=\emptyset$.

Using the similar argument as in Case b-i-1, we can see $\partial_- \W_1$ is disconnected because $\V'_\ast$ adds another component to $\partial_-\W_1$ other than $(F'_V)_W$.
Therefore, we reach (\ref{lemma-3-1-b-i}).
Moreover, $\partial_-\W_1\cap\partial M\subset \partial_-\V$ (see the first one of Figure 9 of \cite{JungsooKim2014}).\\

\Case{b-ii} $V$ is nonseparating in $\V$ and $W$ is separating in $\W$.

In this case, we use the symmetric argument of Case b-i and therefore $\bar{F}_{VW}$ is connected and $\partial_-\W_1$ is connected.\\

\quad\Case{b-ii-1} $W$ cuts off a handlebody $\W'$ from $\W$ such that $\partial V\cap\W'=\emptyset$.

In this case, $\partial_-\V_2$ is connected and therefore we reach (\ref{lemma-3-1-a-iii}) by using the symmetric argument of Case b-i-1.\\

\quad\Case{b-ii-2} $W$ cuts off a compression body $\W'$ such that $\partial_-\W'\neq\emptyset$ and $\partial V\cap\W'=\emptyset$.

In this case, $\partial_-\V_2$ is disconnected and therefore we reach (\ref{lemma-3-1-b-iii}) by using the symmetric argument of Case b-i-2.
Moreover, $\partial_-\V_2\cap\partial M\subset \partial_-\W$.\\

\Case{b-iii} $V$ and $W$ are nonseparating in $\V$ and $\W$, respectively, and $\partial V\cup\partial W$ is separating in $F$.

In this case, each of $F_{V}$ and $F_{W}$ consists of a genus $(n-1)$ surface and $F_{VW}$ is disconnected, where each component of $F_{VW}$ is of genus at most $(n-2)$ because $\partial V$ (resp $\partial W$) is an essential separating curve in the genus $(n-1)$ surface $F_W$ (resp $F_V$) by the assumption that $\partial V\cup\partial W$ is separating in $F$ and there is no $S^2$ component of $F_{VW}$.
Therefore, we cannot find a product region in $\mathbf{H}'$ to clean, i.e. $\bar{F}_V=\tilde{F}_V$, $\bar{F}_W=\tilde{F}_W$, $\bar{F}_{VW}=F_{VW}$ and they divide $M$ into four compression bodies $\V_1$, $\V_2$, $\W_1$ and $\W_2$ with respect to the order of the surfaces.

Let us consider $\V_2$.
Then it is the union of the product region $\tilde{\V}$ between $F_V$ and $\bar{F}_V$ in $\V$ and the $2$-handle corresponding to a product neighborhood of $W$ in $\W$, say $N_{\W}(W)$, where $N_{\W}(W)\cap \tilde{\V}$ is a separating annulus in $F_V$.
Hence, $\partial_-\V_2$  consists of exactly two components and therefore $\partial_-\V_2$ is $\bar{F}_{VW}$ itself.
By the symmetric argument, $\partial_-\W_1=\bar{F}_{VW}$.
Hence, we reach (\ref{lemma-3-1-d}) (see Figure 11 of \cite{JungsooKim2014}).\\[2ex]
\Case{c} $F_{VW}$ consists of three components.

If both $V$ and $W$ are nonseparating, then $F_{VW}$ consists of at most two components.
If one of $V$ and $W$ is nonseparating and the other is separating, say $W$ is nonseparating, then $\partial W$ must be separating in the component of $F_V$ containing $\partial W$ because $F_{VW}$ consists of three components, violating Lemma \ref{lemma-pre}.

Therefore, $V$ and $W$ are separating in $\V$ and $\W$, respectively.
Let $F'_V$ (resp $F'_W$) be the component of $F_V$ (resp $F_W$) containing $\partial W$ (resp $\partial V$) and $F''_V$ (resp $F''_W$) the other component, i.e. we can assume $F''_V$ (resp $F''_W$) is a component of $\overline{\operatorname{Thin}}(\mathbf{H}')$ ($=F_{VW}$).
Let $m_{F'_V}$ $(<n)$ (resp $m_{F'_W}$ $(<n)$) be the genus of $F'_V$ (resp $F'_W$).
Then we can see the following:
\begin{enumerate}
\item $F_{VW}$ consists of three components $F''_V$, $F''_W$ and $F''_{VW}$, where $F''_{VW}$ is the component of $F_{VW}$ having scars of both $V$ and $W$ and $F''_V$ (resp $F''_W$) has only one scar of $V$ (resp $W$).
Here, the genus of $F''_{VW}$ is 
$$n-g(F''_V)-g(F''_W)=n-(n-m_{F'_V})-(n-m_{F'_W})=m_{F'_V}+m_{F'_W}-n.$$
\item $\tilde{F}_V$ (resp $\tilde{F}_W$) consists of two components $\tilde{F}'_V$ and $\tilde{F}''_V$ (resp $\tilde{F}'_W$ and $\tilde{F}''_W$) obtained by pushing $F'_V$ and $F''_V$  off into $\mathrm{int}(\V)$ (resp $F'_W$ and $F''_W$ off into $\mathrm{int}(\W)$), respectively.
\end{enumerate}
Then $\tilde{F}''_V$ and $F''_V$ cobound a product region in $\V$ and therefore $\tilde{F}''_V$ and $F''_V$ disappear after cleaning the relevant portion of $\mathbf{H}'$.
Similarly, $\tilde{F}''_W$ and $F''_W$ disappear after cleaning the relevant portion of $\mathbf{H}'$.
Hence, only $F_{VW}''$ remains in $\overline{\operatorname{Thin}}(\mathbf{H}')$  and only $\tilde{F}'_V$ and $\tilde{F}'_W$ remain in $\operatorname{Thick}(\mathbf{H}')$ after these steps.
Considering $(m_{F'_V}+m_{F'_W}-n)-m_{F'_V}=m_{F'_W}-n<0$, we get $g(F_{VW}'')<g(\tilde{F}'_V)$ and also we get  $g(F_{VW}'')<g(\tilde{F}'_W)$ likewise.
Therefore, we conclude there is no product region in $\mathbf{H}'$ to clean at this moment, i.e. $\bar{F}_V=\tilde{F}'_V$, $\bar{F}_{VW}=F''_{VW}$, and $\bar{F}_W=\tilde{F}'_W$.

Here, we can see $V$ and $W$ cut off compression bodies $\V'$ and $\W'$ from $\V$ and $\W$, respectively, such that $\partial W\cap \V'=\emptyset$ and $\partial V\cap \W'=\emptyset$.\\

\quad\Case{c-i} $\V'$ and $\W'$ are handlebodies.

Let us expand $\V'$ (resp $\W'$) in $\V$ (resp in $\W$) until $\V'$ (resp $\W'$) intersects $\bar{F}_{VW}=F''_{VW}$ in the scar of $V$ (resp the scar of $W$) and let $\V'_\ast$ (resp $\W'_\ast$) be the resulting one.
Then $\W_1$ is the union of the product region $\tilde{\W}$ between $F'_W$ and $\bar{F}_W$ and $\V'_\ast$ along a once-punctured surface in $F$ and therefore $\V'_\ast$ does not add another  component to $\partial_-\W_1$ other than $\bar{F}_{VW}$ itself, i.e. $\partial_-\W_1$ is connected.
The symmetric argument also holds for $\V_2$ by using $\W'_\ast$ and therefore $\partial_-\V_2$ is connected.
This means $\partial_-\V_2=\partial_-\W_1=\bar{F}_{VW}$ and this leads to  (\ref{lemma-3-1-a-iv}) (see the last one of Figure 8 of \cite{JungsooKim2014}).\\

\quad\Case{c-ii} $\V'$ is a compression body such that $\partial_-\V'\neq\emptyset$ and $\W'$ is a handlebody.

Using the similar argument of Case c-i, we can see $\partial_- \V_2$ is connected.
But $\partial_-\W_1$ is disconnected because $\V'_\ast$ adds another component to $\partial_-\W_1$ other than $\bar{F}_{VW}$.
Therefore, we reach (\ref{lemma-3-1-b-ii}).
Moreover, $\partial_-\W_1\cap\partial M\subset \partial_-\V$ (see the second one of Figure 9 of \cite{JungsooKim2014}).\\

\quad\Case{c-iii} $\V'$ is a handlebody and $\W'$ is a compression body such that $\partial_-\W'\neq\emptyset$.

Using the symmetric argument of Case c-ii, we can see $\partial_-\W_1$ is connected and $\partial_- \V_2$ is disconnected.
Therefore, we reach (\ref{lemma-3-1-b-iv}).
Moreover, $\partial_-\V_2\cap\partial M\subset \partial_-\W$.\\

\quad\Case{c-iv} $\V'$ and $\W'$ are compression bodies such that $\partial_-\V'\neq\emptyset$ and $\partial_-\W'\neq\emptyset$.

Using the similar argument of the previous cases, we can see both $\partial_- \V_2$ and $\partial_-\W_1$ are disconnected.
Therefore, we reach (\ref{lemma-3-1-c}).
Moreover, $\partial_-\W_1\cap\partial M\subset \partial_-\V$ and $\partial_-\V_2\cap\partial M\subset \partial_-\W$ (see Figure 10 of \cite{JungsooKim2014}).\\

This completes the proof.
\end{proof}

As summary of the proof of Lemma \ref{lemma-five-GHSs}, we get the following corollary directly.

\begin{corollary}\label{corollary-five-GHSs}
Let $M$, $F$ and $\mathbf{H}$ be as in Lemma \ref{lemma-five-GHSs}.
Then the GHS $\mathbf{H}=(\V_1,\V_2;\bar{F}_V)\cup_{\bar{F}_{VW}}(\W_1,\W_2;\bar{F}_W)$ is obtained as follows.
\begin{enumerate}
\item The thick level $\bar{F}_V$ (resp $\bar{F}_W$) is obtained by pushing the  component of $F_V$ (resp $F_W$) containing $\partial W$ (resp $\partial V$) off slightly into $\mathrm{int}(\V)$ (resp of $\mathrm{int}(\W)$). \label{lemma-3-1-2nd-1}
\item The inner thin level $\bar{F}_{VW}$ is the union of components of $F_{VW}$ having scars of both $V$ and $W$.\label{lemma-3-1-2nd-2}
Moreover, if $\partial_-\V_2\cap\partial M\neq \emptyset$ (resp $\partial_-\W_1\cap\partial M\neq \emptyset$), then it belongs to $\partial_-\W$ (resp $\partial_-\V$). 
\end{enumerate}
\end{corollary}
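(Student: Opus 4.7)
The plan is to extract both assertions directly from the case-by-case analysis carried out in the proof of Lemma \ref{lemma-five-GHSs}, rather than running new arguments. Since the GHS $\mathbf{H}$ is defined as the result of preweak reduction followed by cleaning, the two assertions amount to identifying which components of $\tilde{F}_V \cup \tilde{F}_W$ and $F_{VW}$ survive the cleaning step in each of the five geometric configurations.

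For assertion (\ref{lemma-3-1-2nd-1}), I would run through the cases (\ref{lemma-3-1-a-i})--(\ref{lemma-3-1-d}) of Lemma \ref{lemma-five-GHSs}. When $V$ is nonseparating in $\V$, $F_V$ is connected and automatically contains $\partial W$, so $\tilde{F}_V$ itself is not removed by cleaning and thus equals $\bar{F}_V$. When $V$ is separating in $\V$ (the situations arising in Case b-i and Case c of the proof of Lemma \ref{lemma-five-GHSs}), $F_V$ splits into two components $F_V'$ (the one containing $\partial W$) and $F_V''$; the proof of Lemma \ref{lemma-five-GHSs} shows that the pushed-off copy $\tilde{F}_V''$ cobounds a product region with $F_V''$ and is therefore deleted during cleaning, leaving precisely $\tilde{F}_V'$ as $\bar{F}_V$. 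The argument for $\bar{F}_W$ is symmetric.

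For assertion (\ref{lemma-3-1-2nd-2}), the same case analysis shows that the components of $F_{VW}$ with a scar of only one of $V$ or $W$ are exactly the ones that pair up with a component of $\tilde{F}_V$ or $\tilde{F}_W$ to cobound a product region, and are hence cleaned away. The components with scars of both $V$ and $W$ cannot be cleaned: in Case c of the proof, a direct genus computation gives $g(F_{VW}'') = m_{F_V'} + m_{F_W'} - n$, which is strictly less than both $g(\tilde{F}_V')$ and $g(\tilde{F}_W')$, so no product region can arise; the analogous inequality appears in Case b. Therefore $\bar{F}_{VW}$ is exactly the union of components of $F_{VW}$ carrying scars from both $V$ and $W$. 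The statement about $\partial_-\V_2 \cap \partial M$ and $\partial_-\W_1 \cap \partial M$ reads off directly from Cases b-i-2, b-ii-2, c-ii, c-iii, and c-iv, where the extra boundary component absorbed into $\W_1$ (resp.\ $\V_2$) comes from the expanded compression body $\V'_\ast \subset \V$ (resp.\ $\W'_\ast \subset \W$) and hence lies in $\partial_-\V$ (resp.\ $\partial_-\W$).

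The main obstacle, which is minor, is bookkeeping: one must check in every single subcase that no component of $F_{VW}$ bearing scars of both $V$ and $W$ accidentally cobounds a product with a surviving thick level. This is handled uniformly by the genus comparison above, so no new geometric input is required beyond what is already embedded in the proof of Lemma \ref{lemma-five-GHSs}.
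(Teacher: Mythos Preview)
Your proposal is correct and follows exactly the paper's own approach: the paper states this corollary as a direct ``summary of the proof of Lemma \ref{lemma-five-GHSs}'' with no additional argument, and your write-up simply makes that extraction explicit by revisiting the cleaning step in each case. Your genus-comparison remark and the identification of which components of $F_{VW}$ survive cleaning are precisely the observations already embedded in Cases a, b, and c of that proof.
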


\begin{lemma}\label{lemma-isotopic-same-type}
Let $M$ and $F$ be as in Lemma \ref{lemma-five-GHSs}.
If two GHSs $\mathbf{H}_1$ and $\mathbf{H}_2$ obtained by weak reductions from $(\V,\W;F)$ are isotopic in $M$, then they are of the same type in the sense of Lemma \ref{lemma-five-GHSs}.
\end{lemma}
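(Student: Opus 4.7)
The plan is to lift the isotopy hypothesis to a single homeomorphism $h_1\colon M\to M$ carrying the surfaces of $\mathbf{H}_1$ onto those of $\mathbf{H}_2$, and then to verify that each topological invariant distinguishing the five types in Lemma \ref{lemma-five-GHSs} is preserved by $h_1$.

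Two preliminary facts about $h_1$ form the backbone of the argument. First, because $h_t$ is an ambient isotopy of $M$ beginning at the identity and the collection of boundary components of $\partial M$ is discrete, continuity in $t$ forces each component of $\partial M$ to be sent to itself under $h_1$; in particular $h_1(\partial_-\V)=\partial_-\V$ and $h_1(\partial_-\W)=\partial_-\W$ as subsets of $\partial M$. Second, for $i=1,2$ the closures of the components of $M\setminus(\Thick(\mathbf{H}_i)\cup\Thin(\mathbf{H}_i))$ are exactly the four compression bodies $\V_1^{(i)},\V_2^{(i)},\W_1^{(i)},\W_2^{(i)}$ of the decomposition from Lemma \ref{lemma-five-GHSs}, so $h_1$ induces a bijection between these two collections.

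Next I argue that this bijection respects the partition of each compression body's boundary into $\partial_+$ (a thick level) and $\partial_-$ (an inner thin level and/or a piece of $\partial M$). The key observation is that every thick level is compressible in at least one adjacent compression body, namely the inner one containing the $2$-handle dual to the weak-reducing disk from the opposite side in the construction of Definition \ref{definition-WR}, whereas the inner thin level $\bar{F}_{VW}$ is the negative boundary of both adjacent compression bodies and therefore incompressible in each of them. Since compressibility is a homeomorphism invariant, $h_1$ carries thick levels of $\mathbf{H}_1$ to thick levels of $\mathbf{H}_2$ and thin levels to thin levels, and hence the bijection above matches $\partial_+$ with $\partial_+$ and $\partial_-$ with $\partial_-$.

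Finally I read off from Corollary \ref{corollary-five-GHSs} that the type of a GHS is completely encoded by three items: (i) the number of components of $\bar{F}_{VW}$, which separates type (d) from the rest; (ii) the number of components of each of $\partial_-\V_2$ and $\partial_-\W_1$, which separates types (a), (b), and (c); and (iii) in the asymmetric case, whether the disconnected $\partial_-$ meets $\partial_-\V$ or $\partial_-\W$ in $\partial M$, which separates (b)-$\V$ from (b)-$\W$. Items (i) and (ii) are preserved because $h_1$ is a homeomorphism, and item (iii) is preserved by the first preliminary fact. The main obstacle is precisely item (iii): a priori $h_1$ might swap the two inner compression bodies $\V_2$ and $\W_1$ and thereby confuse (b)-$\V$ with (b)-$\W$, but the labeling ``extra boundary component lies in $\partial_-\W$'' versus ``extra boundary component lies in $\partial_-\V$'' from Corollary \ref{corollary-five-GHSs}, combined with the setwise preservation of $\partial_-\V$ and $\partial_-\W$ under $h_1$, forbids such a swap and completes the proof.
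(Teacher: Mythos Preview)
Your proof is correct and follows the same overall strategy as the paper: pass to the homeomorphism $h_1$, argue that it respects the thick/thin decomposition, and then invoke Corollary \ref{corollary-five-GHSs} together with the componentwise preservation of $\partial M$ to forbid the swap $\V_2\leftrightarrow\W_1$ in the type (b) situation. The case split and the final boundary argument are essentially identical to the paper's.

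The one genuine difference is the mechanism you use to show $h_1(\Thick(\mathbf{H}_1))=\Thick(\mathbf{H}_2)$. The paper introduces an ad hoc combinatorial invariant of a collection $\mathcal{C}$ of disjoint surfaces: a member is declared ``type (i)'' if it bounds a submanifold of $M$ meeting no other member of $\mathcal{C}$, and ``type (ii)'' otherwise; one then checks from Lemma \ref{lemma-five-GHSs} that thick levels are type (i) and inner thin levels are type (ii). You instead use compressibility in the adjacent pieces: each thick level compresses into its inner neighbour (which carries the dual $2$-handle), whereas each thin level is $\partial_-$ of its neighbours and hence incompressible there. Both criteria are homeomorphism invariants of the configuration, so both work; your version is arguably more natural since it invokes the standard fact that $\partial_-$ of a compression body is incompressible rather than a bespoke definition. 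One small point you leave implicit but should perhaps state: components of $\partial M$ are fixed setwise by $h_1$, so the compressibility criterion only needs to separate thick levels from the \emph{inner} thin level, which it does.
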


\begin{proof}
Let $\mathbf{H}_i=(\V_1^i,\V_2^i;\bar{F}_{V_i})\cup_{\bar{F}_{V_i W_i}}(\W_1^i,\W_2^i;\bar{F}_{W_i})$, where $\V_2^i\cap\W_1^i=\bar{F}_{V_i W_i}$ for $i=1,2$.
Suppose there is an isotopy $h_t:M\to M$, $t\in[0,1]$ such that  $h_1(\mathbf{H}_1)=\mathbf{H}_2$.
Since an isotopy preserves each component of $\partial M$, we will not consider such components of $\Thin(\mathbf{H}_1)$ and $\Thin(\mathbf{H}_2)$.

If there is a collection of mutually disjoint surfaces $\mathcal{C}$ (assume each element of $\mathcal{C}$ is connected), then we can divide $\mathcal{C}$ into two collections $\mathcal{C}_1$ and $\mathcal{C}_2$ such that (i) each element of $\mathcal{C}_1$ cuts off a submanifold of $M$ intersecting only itself among the elements of $\mathcal{C}$ and missing the others  and (ii) each element of $\mathcal{C}_2$ does not.
Then we will say ``\textit{$\mathcal{C}_1$ is the type (i) subset of $\mathcal{C}$ and  $\mathcal{C}_2$ is the type (ii) subset of $\mathcal{C}$}''.
We can see the type (i) subset of $\mathcal{C}$ misses the type (ii) subset of $\mathcal{C}$ and the union of these two subsets is the entire collection $\mathcal{C}$.
Let $\mathbf{H}$ be a GHS obtained by weak reduction from $(\V,\W;F)$.
Putting  $\mathcal{C}=\Thick(\mathbf{H})\cup\overline{\Thin}(\mathbf{H})$,  each element of $\Thick(\mathbf{H})$ cuts off a compression body intersecting only itself among the elements of $\mathcal{C}$ and each element of $\overline{\Thin}(\mathbf{H})$  does not cut off such a submanifold by Lemma \ref{lemma-five-GHSs}.
This means $\Thick(\mathbf{H})$ is the type (i) subset of $\mathcal{C}$ and $\overline{\Thin}(\mathbf{H})$ is the type (ii) subset of $\mathcal{C}$.
Note that if there is an ambient isotopy $g_t$ defined on $M$, then the type (i) and type (ii) subsets of $g_t(\mathcal{C})$ are the images of the type (i) and type (ii) subsets of $\mathcal{C}$ of $g_t$, respectively.

Therefore, $h_1(\Thick(\mathbf{H}_1))$ and $h_1(\overline{\Thin}(\mathbf{H}_1))$ are the type (i) and type (ii) subsets of $h_1(\Thick(\mathbf{H}_1)\cup\overline{\Thin}(\mathbf{H}_1))$, respectively.
But the assumption that $h_1(\mathbf{H}_1)=\mathbf{H}_2$ implies $h_1(\Thick(\mathbf{H}_1)\cup\overline{\Thin}(\mathbf{H}_1))=\Thick(\mathbf{H}_2)\cup\overline{\Thin}(\mathbf{H}_2)$.
Since $\Thick(\mathbf{H}_2)$ and $\overline{\Thin}(\mathbf{H}_2)$ are the type (i) and type (ii) subsets of $\Thick(\mathbf{H}_2)\cup\overline{\Thin}(\mathbf{H}_2)$, respectively, we conclude $h_1(\Thick(\mathbf{H}_1))=\Thick(\mathbf{H}_2)$ and $h_1(\overline{\Thin}(\mathbf{H}_1))=\overline{\Thin}(\mathbf{H}_2)$.

This means $h_1(\V_2^1,\W_1^1)=(\V_2^2,\W_1^2)$ or $(\W_1^2,\V_2^2)$, i.e. at least one of $\partial_-\V_2^1\cap \partial M$ and $\partial_-\W_1^1\cap \partial M$ is not empty if and only if at least one of $\partial_-\V_2^2\cap \partial M$ and $\partial_-\W_1^2\cap \partial M$ is not empty.\\

\Case{a} $\partial_-\V_2^i\cap \partial M=\emptyset$ and $\partial_-\W_1^i\cap \partial M=\emptyset$ for $i=1,2$.

In this case, considering Lemma \ref{lemma-five-GHSs}, $\mathbf{H}_1$ and $\mathbf{H}_2$ are of type (a) or type (d).
Since $h_1(\bar{F}_{V_1 W_1})=\bar{F}_{V_2 W_2}$, the number of components of $\bar{F}_{V_1 W_1}$ is the same as that of $\bar{F}_{V_2 W_2}$.
This means both $\mathbf{H}_1$ and $\mathbf{H}_2$ are either of type (a) or of type (d).\\

\Case{b}
At least one of $\partial_-\V_2^i\cap \partial M$ and $\partial_-\W_1^i\cap \partial M$ is not empty for $i=1,2$.

In this case, $\mathbf{H}_1$ and $\mathbf{H}_2$ are of type (b)-$\W$, type (b)-$\V$ or type (c).

Without loss of generality, assume $\partial_-\V_2^1\cap \partial M\neq\emptyset$.
In this case, $\V_2^1\cap \partial M=\partial_-\V_2^1\cap \partial M\subset\partial_-\W$ by Corollary \ref{corollary-five-GHSs}.
Recall $h_1(\V_2^1)$ is either $\V_2^2$ or $\W_1^2$.
In the latter case, $h_1(\V_2^1)\cap \partial M=\W_1^2\cap \partial M\subset \partial_-\V$ by Corollary \ref{corollary-five-GHSs}, leading to a contradiction because an isotopy preserves each component of $\partial M$.
Hence, $h_1(\V_2^1)=\V_2^2$ and  $h_1(\W_1^1)=\W_1^2$.
Therefore, we conclude the shapes of $\partial_-\V_2^1$ and $\partial_-\W_1^1$ (including the property whether each of them intersects $\partial M$ or not) are the same as those of $\partial_-\V_2^2$ and $\partial_-\W_1^2$, respectively.
Hence, $\mathbf{H}_1$ and $\mathbf{H}_2$ are of the same type in the sense of Lemma \ref{lemma-five-GHSs}.

This completes the proof.
\end{proof}

Here, we introduce important ideas in the author's result \cite{JungsooKim2012} according to the context.

\begin{definition}[modification of Definition 2.12 of \cite{JungsooKim2012}]
Let $M$ and $F$ be as in Lemma \ref{lemma-five-GHSs} and recall the equivalent relation $\sim$ defined on $\mathcal{GHS}_F^\ast$ in Lemma \ref{lemma-equivalent-class}.
In a weak reducing pair for the Heegaard splitting $(\V,\W;F)$, if a disk belongs to $\V$, then we call it a \emph{$\V$-disk}.
Otherwise, we call it a \emph{$\W$-disk}.	
We call a $2$-simplex in $\D(F)$ represented by two vertices in $\DV(F)$ and one vertex in $\DW(F)$ a \textit{$\V$-face}, and also define a \textit{$\W$-face} symmetrically.
If there is a $\V$- or $\W$-face $\Delta$, then there exist exactly two weak reducing pairs in $\Delta$ and one shares a disk with the other.
Suppose  there is a $\V$-face $\Delta$ such that the GHSs obtained by weak reductions from $(\V,\W;F)$ along the two weak reducing pairs forming $\Delta$ are equivalent.
Then we call this $\V$-face a ``\textit{$\V$-face having one equivalent class}'' and also we define a ``\textit{$\W$-face having one equivalent class}'' likewise.

Let us consider a graph as follows.
\begin{enumerate}
\item We assign a vertex to each $\V$-face having one equivalent class.
\item If a $\V$-face having one equivalent class shares a weak reducing pair with another $\V$-face having one equivalent class, then we assign an edge between these two vertices in the graph.
\end{enumerate}
We call this graph the \emph{graph of $\V$-faces having one equivalent class}.
If there is a maximal subset $\varepsilon_\V$ of $\V$-faces having one equivalent class which represents a connected component of the graph of $\V$-faces having one equivalent class and the component is not an isolated vertex, then we call $\varepsilon_\V$ a \emph{$\V$-facial cluster having one equivalent class}.
Similarly, we define the \emph{graph of $\W$-faces having one equivalent class} and  a \textit{$\W$-facial cluster having one equivalent class}.
We will clarify the meaning of the phrase ``\textit{having one equivalent class}'' in Lemma \ref{lemma8}.
\end{definition}

\begin{lemma}\label{lemma-equivalent}
Let $M$ and $F$ be as in Lemma \ref{lemma-five-GHSs}.
Let $\Delta$ be a $\V$-face.
Then the GHSs obtained by weak reductions from $(\V,\W;F)$ along the two weak reducing pairs forming $\Delta$ are equivalent if and only if one $\V$-disk of $\Delta$ cuts off a solid torus $\V'$ from $\V$ and the other $\V$-disk is a meridian disk of $\V'$.
\end{lemma}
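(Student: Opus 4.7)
The plan is to combine the equivalence conditions of Definition \ref{definition-HS-GHS} (most importantly condition (ii) on the thick levels in $\V$) with Corollary \ref{corollary-five-GHSs} which identifies $\bar{F}_{V_i}$ as the component of $\tilde{F}_{V_i}$ containing the push-off of $\partial W$, then feed these directly into the compression-body isotopy Lemmas \ref{lemma-2-21} and \ref{lemma-2-22}. Write $\Delta=\{V_1,V_2,W\}$ with $V_1,V_2\in\DV(F)$ and $W\in\DW(F)$, so the two weak reducing pairs are $(V_1,W)$ and $(V_2,W)$, producing GHSs $\mathbf{H}_1$ and $\mathbf{H}_2$.

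For the ($\Rightarrow$) direction, assume $\mathbf{H}_1\sim\mathbf{H}_2$, so condition (ii) yields an isotopy of $\bar{F}_{V_1}$ to $\bar{F}_{V_2}$ in $\V$. In particular, some component of $\tilde{F}_{V_2}$ is isotopic to some component of $\tilde{F}_{V_1}$ in $\V$. I then split on whether each $V_i$ is separating. If both are nonseparating then $\tilde{F}_{V_i}$ is connected and Lemma \ref{lemma-2-21}(\ref{lemma-2-21-1}) forces $V_1$ isotopic to $V_2$ in $\V$, contradicting that they are distinct vertices of $\Delta$. If both are separating (and disjoint by definition of a $2$-simplex), Lemma \ref{lemma-2-22} gives the same contradiction. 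Hence exactly one of $V_1,V_2$ is separating, say $V_1$, and Lemma \ref{lemma-2-21}(\ref{lemma-2-21-2}) delivers precisely the statement that $V_1$ cuts off a solid torus $\V'$ from $\V$ and $V_2$ is a meridian disk of $\V'$.

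For the ($\Leftarrow$) direction, suppose $V_1$ cuts off a solid torus $\V'$ from $\V$ and $V_2$ is a meridian of $\V'$. The ($\Leftarrow$) half of Lemma \ref{lemma-2-21}(\ref{lemma-2-21-2}) produces an ambient isotopy of $\V$ carrying $\tilde{F}_{V_2}$ onto the genus $n{-}1$ component of $\tilde{F}_{V_1}$. Before this can serve as condition (ii), I must verify that $\partial W$ lies on the non-solid-torus side of $\partial V_1$ in $F$. Otherwise $\partial W$ is an essential simple closed curve in the once-punctured torus $T=(\partial_+\V')\cap F$, disjoint from $\partial V_1=\partial T$ and from the meridian $\partial V_2$; cutting $T$ along the nonseparating $\partial V_2$ yields a pair of pants in which the only essential simple closed curves are peripheral, so $\partial W$ is isotopic in $F$ to either $\partial V_1$ or $\partial V_2$, yielding a reducing pair with $V_1$ or $V_2$ and contradicting that $(\V,\W;F)$ is unstabilized. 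Granted this placement of $\partial W$, the isotopy matches $\bar{F}_{V_1}$ (the non-torus component) with $\bar{F}_{V_2}=\tilde{F}_{V_2}$, giving (ii); since $\partial V_1$ and $\partial V_2$ now lie on the same side of $\partial W$ in $F$, the components of $F_W$ containing them coincide and (iii) follows; the inner thin levels are obtained by further compressing this common component along the images of $V_1$ and $V_2$ in $\W$, and the standard product isotopy of the solid torus $\V'$ sending $V_1$ to $V_2$ (combined with Lemma \ref{lemma-2-19} identifying meridians up to isotopy) transports one inner thin level to the other inside $M$, giving (iv); because all these isotopies are supported in a neighborhood of $\V'$ together with the $2$-handle neighborhood of $W$ in $\W$, they extend by identity to the ambient isotopy in $M$ required for (i).

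The main obstacle is the ($\Leftarrow$) direction, specifically the curve-combinatorial step forbidding $\partial W$ from lying in the once-punctured-torus side of $\partial V_1$: this is where the unstabilized hypothesis of the theorem (ruling out reducing pairs) has to be invoked, and also where the classification of disjoint essential simple closed curves on the once-punctured torus drives the argument. Once $\partial W$ is placed correctly, promoting the compression-body isotopies of Lemmas \ref{lemma-2-19} and \ref{lemma-2-21} to a global ambient isotopy of the GHS in $M$ is a matter of supporting each isotopy in its natural local neighborhood and verifying compatibility along the shared inner thin level.
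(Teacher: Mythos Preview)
Your argument follows essentially the same route as the paper: reduce the equivalence condition to the isotopy of $\bar{F}_{V_1}$ with $\bar{F}_{V_2}$ in $\V$, then split on the separating/nonseparating types of $V_1,V_2$ and invoke Lemmas \ref{lemma-2-21} and \ref{lemma-2-22}. Two points are worth flagging.

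First, for placing $\partial W$ on the non--solid-torus side of $\partial V_1$ in the $(\Leftarrow)$ direction, your pair-of-pants argument is correct but unnecessary. The paper simply cites Lemma \ref{lemma-2-8}: if $\partial W$ lay in the once-punctured torus cut off by $\partial V_1$, then the torus component of $F_{V_1}$ would compress along $W$ to a sphere carrying scars of both $V_1$ and $W$, which is forbidden. This is shorter and avoids the appeal to reducing pairs.

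Second, your description of the isotopy for condition (iv) contains a misstatement. You write of ``the standard product isotopy of the solid torus $\V'$ sending $V_1$ to $V_2$'', but $V_1$ is separating and $V_2$ is nonseparating, so no isotopy of $\V$ carries one to the other. What actually works---and what the paper does---is to reuse the isotopy from the $(\Leftarrow)$ half of Lemma \ref{lemma-2-21}(\ref{lemma-2-21-2}) (the one that pushes $\tilde{F}_{V_2}$ across the $3$-ball bounded by a subdisk of $V_1$ and a subdisk of $\tilde{F}_{V_2}$): since this isotopy is supported away from a neighborhood of $W$, it carries the component of $F_{V_2W}$ with both scars onto the component of $F_{V_1W}$ with both scars. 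Once you phrase (iv) this way, your verification of (i)--(iii) and the paper's are identical.
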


\begin{proof}
Let $\Delta=\{V_1,V_2,W\}$ for $V_1,V_2\subset\V$ and $W\subset \W$, and $\mathbf{H}_i=(\bar{F}_{V_i},\bar{F}_{V_i W},\bar{F}_W^i)$ the GHS obtained by weak reduction  along $(V_i,W)$ for $i=1,2$.

$(\Leftarrow)$ Suppose $V_1$ cuts off a solid torus $\V'$ from $\V$ and $V_2$ is a meridian disk of $\V'$ (so $V_2$ is nonseparating in $\V$).
Let $F'_{V_1}$ be the genus $(n-1)$-component of $F_{V_1}$.
Since $\partial W\cap\V'=\emptyset$ by Lemma \ref{lemma-2-8},  $F'_{V_1}$ contains $\partial W$.
This means the component of $F_W$ containing $\partial V_1$, say $F_W'$, also contains the once-punctured torus that $\partial V_1$ cuts off from $F$.
Hence, $F_W'$ also contains $\partial V_2$ and therefore we can assume $\bar{F}_{W}^1=\bar{F}_{W}^2$ by Corollary \ref{corollary-five-GHSs}.

Here, $\bar{F}_{V_1}$ and $\bar{F}_{V_2}$ are obtained by pushing $F'_{V_1}$ and $F_{V_2}$ off slightly into $\mathrm{int}(\V)$, respectively, by Corollary \ref{corollary-five-GHSs}.
Moreover, considering (\ref{lemma-2-21-2}) of Lemma \ref{lemma-2-21} and the genus of $\bar{F}_{V_2}$, $\bar{F}_{V_2}$ is isotopic to $\bar{F}_{V_1}$ in $\V$.
Indeed, considering (c) and (d) of Figure \ref{fig-2-10-2},  we can say the isotopy taking $\bar{F}_{V_2}$ into $\bar{F}_{V_1}$ pushes off the $3$-ball part in (c) into $\V-\V'$ without affecting a small neighborhood of $V_2$ and that of $W$ in $M$.
Hence, considering the component of $F_{V_2 W}$ having scars of both $V_2$ and $W$,  it is isotopic to the component of $F_{V_1 W}$ having scars of both $V_1$ and $W$ in $M$ by  the similar way sending $\bar{F}_{V_2}$ into $\bar{F}_{V_1}$.
This means $\bar{F}_{V_2 W}$ is isotopic to $\bar{F}_{V_1 W}$ in $M$ by Corollary \ref{corollary-five-GHSs}.
Therefore, we can isotope $\mathbf{H}_2$ into $\mathbf{H}_1$ by isotoping $\bar{F}_{V_2}$ into $\bar{F}_{V_1}$ first and then isotoping  $\bar{F}_{V_2 W}$ into $\bar{F}_{V_1 W}$ next.

Hence, summing up the above observations, we conclude $\mathbf{H}_2$ is equivalent to $\mathbf{H}_1$.\\

$(\Rightarrow)$ Suppose $\mathbf{H}_1$ is equivalent to $\mathbf{H}_2$.
Hence, $\bar{F}_{V_1}$ is isotopic to $\bar{F}_{V_2}$ in $\V$.\\

\Case{a} Both $V_1$ and $V_2$ are nonseparating in $\V$.

In this case, $\bar{F}_{V_1}$ and $\bar{F}_{V_2}$ are obtained by pushing  $F_{V_1}$ and $F_{V_2}$ off slightly into $\mathrm{int}(\V)$, respectively, by Corollary \ref{corollary-five-GHSs}.
Therefore, considering the assumption that $\bar{F}_{V_1}$ is isotopic to $\bar{F}_{V_2}$ in $\V$, $V_1$ is isotopic to $V_2$ in $\V$ by (\ref{lemma-2-21-1}) of Lemma \ref{lemma-2-21}, violating the assumption that $\Delta$ is a $2$-simplex.\\

\Case{b} One of $V_1$ and $V_2$ is nonseparating and the other is separating in $\V$.

Assume $V_1$ is nonseparating.
In this case, $\bar{F}_{V_1}$ is obtained by pushing  $F_{V_1}$ off slightly into $\mathrm{int}(\V)$ and $\bar{F}_{V_2}$ is obtained by pushing the component of $F_{V_2}$  containing $\partial W$ off slightly into $\mathrm{int}(\V)$ by Corollary \ref{corollary-five-GHSs}.
Since $V_1$ is nonseparating and $V_2$ is separating in $\V$, the assumption that $\bar{F}_{V_1}$ is  isotopic to $\bar{F}_{V_2}$ in $\V$ induces that $V_2$ cuts off a solid torus $\V'$ from $\V$ and $V_1$ is isotopic to a meridian disk $D$ of $\V'$ in $\V$ by (\ref{lemma-2-21-2}) of Lemma \ref{lemma-2-21} and we can assume that $D\cap V_2=\emptyset$.

Considering $V_1\cap V_2=\emptyset$, either $V_1\subset \V'$ or $V_1\cap \V'=\emptyset$.
Assume that  $V_1\cap \V'=\emptyset$, i.e. the component of $F-\partial V_2$ containing $\partial D$ is different from that containing $\partial V_1$.
Then $\partial V_1$ and $\partial D$ bound an annulus $A$ in $F$ because they are disjoint, isotopic, essential simple closed curves in $F$ (see Lemma 3.3 of \cite{Gelca2014}).
If $A\cap\partial V_2=\emptyset$, then  both $\partial V_1$ and $\partial D$ belong to one component of $F-\partial V_2$, leading to a contradiction.
Therefore, considering $(D\cup V_1)\cap V_2=\emptyset$, $\partial V_2\subset A$ and it is essential in $A$ because $V_2$ is a compressing disk for $F$.
This means $\partial V_2$ is isotopic to $\partial V_1$ in $A$ and therefore $V_2$ is  nonseparating in $\V$, violating the assumption.

Therefore, $V_1\subset \V'$.
Hence, considering Lemma \ref{lemma-pre},  $V_1$ is  a meridian disk of $\V'$.\\

\Case{c} Both $V_1$ and $V_2$ are separating in $\V$.

In this case, $\bar{F}_{V_1}$ and $\bar{F}_{V_2}$ are obtained by pushing the components of $F_{V_1}$ and $F_{V_2}$ containing $\partial W$ off slightly into $\mathrm{int}(\V)$, respectively, by Corollary \ref{corollary-five-GHSs}.
Moreover, $V_1$ and $V_2$ are mutually disjoint separating disks in $\V$.
Therefore, the assumption that $\bar{F}_{V_1}$ is isotopic to $\bar{F}_{V_2}$ in $\V$ induces $V_1$ is isotopic to $V_2$ in $\V$ by Lemma \ref{lemma-2-22}.
This violates the assumption that $\Delta$ is a $2$-simplex.\\

Therefore, we can see only Case b holds, leading to the result.

This completes the proof.
\end{proof}

Considering Lemma 2.9 of \cite{JungsooKim2013}, we get the following corollary directly from Lemma \ref{lemma-equivalent}.

\begin{corollary}\label{lemma-equivalent-genus3}
Suppose $M$ is an irreducible $3$-manifold and $(\V,\W;F)$ is a weakly reducible, unstabilized Heegaard splitting of $M$ of genus three.
Then every $\V$- or $\W$-face  has one equivalent class.
\end{corollary}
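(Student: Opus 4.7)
The plan is to reduce the statement directly to Lemma~\ref{lemma-equivalent} via the structural input from Lemma~2.9 of \cite{JungsooKim2013}. Lemma~\ref{lemma-equivalent} shows that a $\V$-face $\Delta=\{V_1,V_2,W\}$ yields equivalent GHSs after weak reduction along its two weak reducing pairs if and only if, among $V_1,V_2$, one disk cuts off a solid torus $\V'$ from $\V$ and the other is a meridian of $\V'$. So all that is needed is a genus-three structural result saying that this configuration is the \emph{only} way two disjoint, non-isotopic compressing disks $V_1,V_2\subset\V$ can coexist together with a common disjoint disk $W\subset\W$.

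Concretely, I would proceed as follows. First, let $\Delta=\{V_1,V_2,W\}$ be any $\V$-face, so that $(V_1,W)$ and $(V_2,W)$ are weak reducing pairs sharing the $\W$-disk $W$. Invoking Lemma~2.9 of \cite{JungsooKim2013} for the genus-three setting, conclude that after possibly relabeling, $V_1$ cuts off a solid torus $\V'\subset\V$ and $V_2$ is a meridian disk of $\V'$ (and $\partial W$ is disjoint from $\V'$, which is automatic by Lemma~\ref{lemma-2-8}). Second, apply the $(\Leftarrow)$ direction of Lemma~\ref{lemma-equivalent} to the pair $(V_1,V_2,W)$ in this configuration, which immediately yields that the GHSs $\mathbf{H}_1$ and $\mathbf{H}_2$ obtained by weak reduction from $(\V,\W;F)$ along $(V_1,W)$ and $(V_2,W)$ respectively are equivalent. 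By definition, this is precisely the statement that $\Delta$ is a $\V$-face having one equivalent class.

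For a $\W$-face, the argument is entirely symmetric: swap the roles of $\V$ and $\W$ throughout, using the symmetric version of Lemma~2.9 of \cite{JungsooKim2013} to obtain the analogous solid-torus-meridian structure inside $\W$, and then apply the $\W$-analogue of Lemma~\ref{lemma-equivalent} to conclude.

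The only real content here is the cited genus-three structural lemma, so the ``hard part'' is really external; once one accepts that in genus three any two disjoint non-isotopic disks in $\V$ admitting a common disjoint disk in $\W$ must stand in the solid-torus-meridian relation, Lemma~\ref{lemma-equivalent} converts this structural constraint into the desired equivalence of GHSs, and the corollary is immediate.
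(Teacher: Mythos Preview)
Your proposal is correct and follows essentially the same approach as the paper: the paper states that the corollary follows directly from Lemma~\ref{lemma-equivalent} together with Lemma~2.9 of \cite{JungsooKim2013}, which is precisely the reduction you carry out. Your write-up simply makes explicit the two steps (the genus-three structural input, then the $(\Leftarrow)$ direction of Lemma~\ref{lemma-equivalent}) that the paper leaves implicit.
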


\begin{lemma}[analogue to Lemma 2.13 of \cite{JungsooKim2012}]\label{lemma8}
Let $M$ and $F$ be as in Lemma \ref{lemma-five-GHSs}.
If there are two different $\V$-faces having one equivalent class $\Delta_1=\{V_0,V_1,W\}$ and $\Delta_2=\{V_1, V_2, W\}$, then $V_1$ is nonseparating and $V_0$ and $V_2$ are separating in $\V$.
Therefore, there is a unique weak reducing pair in a $\V$-facial cluster having one equivalent class which can belong to two or more faces in the $\V$-facial cluster having one equivalent class.
This means every GHS obtained by weak reduction in a $\V$-facial cluster having one equivalent class is equivalent to the GHS obtained by using  the commonly shared weak reducing pair.
\end{lemma}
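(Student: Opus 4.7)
The plan is to reduce the first assertion to the dichotomy provided by Lemma \ref{lemma-equivalent} and then to propagate it along the face graph to obtain the remaining assertions about an entire $\V$-facial cluster.

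First I would apply Lemma \ref{lemma-equivalent} separately to $\Delta_1$ and $\Delta_2$: in each face, exactly one of the two $\V$-disks is separating in $\V$ and cuts off a solid torus, and the other is a meridian disk of that solid torus. To force $V_1$ to be nonseparating, I would argue by contradiction. If $V_1$ were separating, it would play the role of the ``separating'' disk in both $\Delta_1$ and $\Delta_2$; since $V_1$ alone determines the decomposition of $\V$ into its two pieces, the solid torus it cuts off is the same in both cases, call it $\V_1'$, and then both $V_0$ and $V_2$ would be meridian disks of $\V_1'$. Because the meridian disk of a solid torus is unique up to isotopy, $V_0$ would be isotopic to $V_2$ in $\V_1'$, and hence in $\V$ by Corollary \ref{corollary-isotopic-bd}; this would give $V_0=V_2$ in $\D(F)$, contradicting $\Delta_1\neq \Delta_2$. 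With $V_1$ nonseparating, the dichotomy of Lemma \ref{lemma-equivalent} immediately forces $V_0$ and $V_2$ to be separating.

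For the uniqueness statement inside a $\V$-facial cluster $\varepsilon_\V$ having one equivalent class, the first part shows that whenever two faces of $\varepsilon_\V$ are adjacent in the graph the shared $\V$-disk is nonseparating; in particular, a shared weak reducing pair in $\varepsilon_\V$ is pinned down by the unique nonseparating $\V$-disk of either of the two faces it meets. Walking along a path in $\varepsilon_\V$, each edge preserves both the common $\W$-disk $W$ and the common nonseparating $\V$-disk $V_1$, so by induction every face of $\varepsilon_\V$ contains $(V_1,W)$, and this is the only weak reducing pair belonging to more than one face of $\varepsilon_\V$.

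Finally, for any face $\Delta\subset\varepsilon_\V$ and any weak reducing pair $(V_j,W)$ in $\Delta$, the hypothesis that $\Delta$ has one equivalent class says that the GHS obtained from $(V_j,W)$ is equivalent to the GHS obtained from $(V_1,W)$; transitivity of $\sim$ (Lemma \ref{lemma-equivalent-class}) then gives the claim. The only delicate step will be the contradiction in the first paragraph: it requires both the uniqueness of the solid torus cobounded by a separating disk, which is implicit in Lemma \ref{lemma-2-19}, and the passage from isotopy inside the solid torus to isotopy in $\V$, which is exactly Corollary \ref{corollary-isotopic-bd}.
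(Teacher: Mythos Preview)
Your argument is correct and follows the same route as the paper: assume $V_1$ is separating, use Lemma~\ref{lemma-equivalent} to force $V_0$ and $V_2$ to be meridian disks of the \emph{same} solid torus, and invoke the uniqueness of a meridian disk to collapse $\Delta_1$ and $\Delta_2$. The paper's proof is terser---it states only the contradiction and leaves the ``Therefore'' and ``This means'' clauses of the lemma as immediate---whereas you spell out the propagation along the face graph and the transitivity of~$\sim$; this extra detail is fine. One minor remark: the paper justifies the uniqueness of the solid torus cut off by $V_1$ directly from the hypothesis $g(F)\geq 3$ (so the other piece has genus $\geq 2$), rather than by appeal to Lemma~\ref{lemma-2-19}, whose hypothesis on $\partial_-\V$ need not hold here; your invocation of Lemma~\ref{lemma-2-19} should be replaced by that genus count.
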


\begin{proof}
For the sake of contradiction, assume $V_1$ is separating in $F$.
Then $V_0$ and $V_2$ are meridian disks of the solid torus $\V'$ that $V_1$ cuts off from $\V$ by Lemma \ref{lemma-equivalent}, where this solid torus is uniquely determined by the assumption that the genus of $F$ is at least three.
But the uniqueness of a meridian disk of a solid torus up to isotopy forces $\Delta_1$ and $\Delta_2$ to be the same, violating the assumption.
Hence, $V_1$ is nonseparating and therefore $V_0$ and $V_2$ are separating by Lemma \ref{lemma-equivalent}.
This completes the proof.
\end{proof}

Considering Lemma \ref{lemma8}, we can say a $\V$- or $\W$-facial cluster having one equivalent class $\varepsilon$ \textit{has an equivalent class}, where it refers to the equivalent class of the GHSs obtained by weak reductions along the weak reducing pairs in $\varepsilon$.

\begin{definition}[analogue to Definition 2.14 of \cite{JungsooKim2012}]\label{definition-2-14}
Let $M$ and $F$ be as in Lemma \ref{lemma-five-GHSs}.
	Considering Lemma \ref{lemma8}, there is a unique weak reducing pair  in a $\V$-facial cluster having one equivalent class which can belongs to two or more faces in the cluster.
	We call it the \textit{center}.
	Lemma \ref{lemma-equivalent} and Lemma \ref{lemma8} induce any $\V$-disk in the cluster not belonging to the center cuts off a solid torus from $\V$, where this solid torus is uniquely determined by the disk, and the $\V$-disk  in the center is a meridian disk of the solid torus.
	The shape of a $\V$-facial cluster having one equivalent class is exactly the same as a \textit{$\V$-facial cluster} which was introduced by the author for the genus three case in \cite{JungsooKim2012} (see the third one of Figure \ref{fig-bbs}).
		Therefore, if a $\V$-face    in a $\V$-facial cluster having one equivalent class is represented by two weak reducing pairs, then one weak reducing pair is the center.
\end{definition}

\begin{lemma}[analogue to Lemma 2.15 of \cite{JungsooKim2012}]\label{lemma-DorE} 
Let $M$ and $F$ be as in Lemma \ref{lemma-five-GHSs}.
Every $\V$-face having one equivalent class belongs to a uniquely determined $\V$-facial cluster having one equivalent class. 
Moreover, every $\V$-facial cluster having one equivalent class has infinitely many weak reducing pairs.
\end{lemma}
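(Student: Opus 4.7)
The uniqueness portion is formal: by definition a $\V$-facial cluster having one equivalent class is a non-singleton connected component of the graph of $\V$-faces having one equivalent class, and two distinct connected components of any graph are disjoint, so any $\V$-face having one equivalent class can lie in at most one such cluster. The substantive content is therefore to show that every $\V$-face $\Delta$ having one equivalent class is non-isolated in this graph (so that it lies in some cluster) and that each such cluster carries infinitely many weak reducing pairs. My plan is to prove both claims simultaneously: starting from $\Delta$, I will construct an infinite family of other $\V$-faces having one equivalent class, all sharing a single common weak reducing pair with $\Delta$.

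By Lemma \ref{lemma-equivalent}, we may write $\Delta=\{V_1,V_2,W\}$ where $V_2\subset\V$ is separating and cuts off a solid torus $\V'$ from $\V$, and $V_1\subset\V'$ is a meridian disk of $\V'$; in particular $V_1$ is nonseparating in $\V$, and by Lemma \ref{lemma8} the pair $(V_1,W)$ plays the role of the intended center. It suffices to produce infinitely many pairwise non-isotopic separating disks $V_i\subset\V$ such that each $V_i$ cuts off a solid torus from $\V$ having $V_1$ as a meridian and $\partial V_i\cap\partial W=\emptyset$. For then, by Lemma \ref{lemma-equivalent}, each triple $\{V_1,V_i,W\}$ is a $\V$-face having one equivalent class, and these faces all share the weak reducing pair $(V_1,W)$ with $\Delta$ and with each other, so they all lie in the same connected component of the graph as $\Delta$.

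For the construction, let $\bar F$ be the connected genus $(n-1)$ surface obtained by cutting $F$ along the nonseparating curve $\partial V_1$, with $\gamma^+,\gamma^-$ the two resulting boundary circles. Let $T\subset F$ be the once-punctured torus bounded by $\partial V_2$ containing $\partial V_1$, and set $F':=F-T$; by Lemma \ref{lemma-2-8} applied to $\V'$ we have $\partial W\subset F'$. The image of $T$ in $\bar F$ is a pair of pants $\bar T$ with boundary $\partial V_2\cup\gamma^+\cup\gamma^-$, glued to $F'$ along $\partial V_2$. The component of $F'-\partial W$ adjacent to $\partial V_2$ has positive genus (since, by the unstabilized hypothesis, $\partial W$ is not isotopic to $\partial V_2$ in $F$, ruling out the annulus case when $\partial W$ is separating in $F'$, while the nonseparating case yields a surface of genus $n-2\geq 1$), so it has infinite fundamental group; consequently there exist infinitely many pairwise non-isotopic properly embedded arcs $\alpha_i$ in $\bar F-\partial W$ from $\gamma^+$ to $\gamma^-$, obtained for example by concatenating a fixed arc across $\bar T$ with loops of unbounded length in the component of $F'-\partial W$ adjacent to $\partial V_2$. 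For each $\alpha_i$ (regarded as an arc in $F\subset\partial\V$), let $V_i$ be the frontier disk in $\V$ of a thin regular neighborhood $\V_i'$ of $V_1\cup\alpha_i$ in $\V$. Then $\V_i'$ is a solid torus in $\V$ with $V_1$ as meridian, $V_i$ is a separating compressing disk in $\V$ cutting off $\V_i'$, and $\partial V_i$ lies in a small neighborhood of $\partial V_1\cup\alpha_i$ hence is disjoint from $\partial W$.

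The main technical obstacle is to verify that the disks $V_i$ are pairwise non-isotopic in $\V$; by Lemma \ref{lemma-isotopic-bd} this reduces to showing that the boundary curves $c_i:=\partial V_i$ are pairwise non-isotopic in $F$. I plan to achieve this by choosing the arcs $\alpha_i$ to differ by increasing powers of a Dehn twist along a fixed essential simple closed curve inside the component of $F'-\partial W$ adjacent to $\partial V_2$; then the geometric intersection numbers between $c_i$ and this fixed curve grow without bound, so the $c_i$ represent infinitely many distinct isotopy classes in $F$. Combining this with the preceding construction yields infinitely many $\V$-faces $\{V_1,V_i,W\}$ having one equivalent class which are connected by the edge $(V_1,W)$ to $\Delta$ in the graph; hence $\Delta$ is non-isolated, lies in a uniquely determined cluster, and that cluster contains infinitely many weak reducing pairs.
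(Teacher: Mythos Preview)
Your argument is essentially the same band-sum construction the paper invokes (it defers the existence/infiniteness to \cite{JungsooKim2012}, Lemma 2.15), and your uniqueness via ``distinct components are disjoint'' is actually cleaner than the paper's route through the center. One point needs tightening, however: your twisting curve $c$ lies in $\Sigma_0\subset F'$, while the ``fixed arc across $\bar T$'' you start from is disjoint from $F'$, so applying powers of $\tau_c$ to that arc does nothing and the resulting $c_i$ are all equal to $\partial V_2$. To make the Dehn-twist mechanism work you must begin with an arc $\alpha_0$ that actually enters $\Sigma_0$ and crosses $c$ essentially---for instance, run from $\gamma^+$ across $\partial V_2$, along an essential arc in $\Sigma_0$ hitting $c$ once, back across $\partial V_2$, and down to $\gamma^-$ (such an embedded arc exists since $\Sigma_0$ has positive genus). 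With that adjustment the intersection numbers $i(c_i,c)$ do grow and the rest of your argument goes through.
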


\begin{proof}
A $\V$-face $\Delta$ has one equivalent class if and only if a $\V$-disk of $\Delta$ cuts off a solid torus $\V'$ from $\V$ and the other $\V$-disk is a meridian disk of $\V'$ by Lemma \ref{lemma-equivalent}.
Therefore, the proof is essentially the same as that of Lemma 2.15 of \cite{JungsooKim2012} except for the uniqueness of the $\V$-facial cluster having one equivalent class which contains $\Delta$.

Let us consider the uniqueness.
Suppose a $\V$-face $\Delta$ having one equivalent class belongs to two $\V$-facial clusters having one equivalent class, say  $\varepsilon_1$ and $\varepsilon_2$.
Then there is a weak reducing pair in $\Delta$ containing a nonseparating $\V$-disk by Lemma \ref{lemma-equivalent} and it must be the center of $\varepsilon_i$ for $i=1,2$ by Definition \ref{definition-2-14}.
This means all $\V$-faces of $\varepsilon_1$ and $\varepsilon_2$ share the common center, i.e. $\varepsilon_1$ and $\varepsilon_2$ correspond to the same connected component of the graph of $\V$-faces having one equivalent class and therefore we conclude $\varepsilon_1=\varepsilon_2$.

This completes the proof.
\end{proof}

The proof of Lemma \ref{lemma-DorE} induces the following corollary directly.

\begin{corollary}\label{corollary-DorE} 
Let $M$ and $F$ be as in Lemma \ref{lemma-five-GHSs}.
For a weak reducing pair, there is at most one $\V$-facial cluster having one equivalent class whose center is the weak reducing pair.
\end{corollary}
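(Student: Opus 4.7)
The plan is to extract the argument directly from the last paragraph of the proof of Lemma \ref{lemma-DorE}, where a closely related uniqueness claim is already handled. Suppose, for the sake of comparison, that a weak reducing pair $(V,W)$ is the center of two $\V$-facial clusters having one equivalent class $\varepsilon_1$ and $\varepsilon_2$.

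First I would record a small preliminary observation: every $\V$-face in a $\V$-facial cluster having one equivalent class contains the center as one of its two weak reducing pairs. This follows because two $\V$-faces are adjacent in the graph of $\V$-faces having one equivalent class precisely when they share a weak reducing pair, and by Definition \ref{definition-2-14} the center is the unique weak reducing pair of the cluster that is shared by two or more faces. Since by definition a cluster is a connected component that is not an isolated vertex, every face of the cluster lies on at least one edge, and the weak reducing pair on that edge must be the center; hence every face of the cluster contains $(V,W)$.

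With this in hand, I would pick arbitrary $\Delta_1\in\varepsilon_1$ and $\Delta_2\in\varepsilon_2$. If $\Delta_1=\Delta_2$, then a single $\V$-face belongs to both clusters, and the uniqueness part of Lemma \ref{lemma-DorE} immediately yields $\varepsilon_1=\varepsilon_2$. If instead $\Delta_1\neq\Delta_2$, then by the preliminary observation both faces contain the common pair $(V,W)$, so they are joined by an edge in the graph of $\V$-faces having one equivalent class; consequently $\Delta_1$ and $\Delta_2$ lie in the same connected component of that graph, which again forces $\varepsilon_1=\varepsilon_2$.

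I do not anticipate a genuine obstacle. The only step that needs explicit mention, and which is taken for granted in the preceding discussion, is the observation that every face of a cluster contains the center; once that is in place the rest is pure connectivity of the graph of $\V$-faces having one equivalent class, and the symmetric statement for $\W$-facial clusters follows identically.
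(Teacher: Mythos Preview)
Your argument is correct and is essentially the same as the paper's, which simply notes that the corollary follows directly from the uniqueness paragraph in the proof of Lemma~\ref{lemma-DorE}: once two clusters share the center, every face of each contains that center (this is already recorded in Definition~\ref{definition-2-14}), so all faces lie in one connected component of the graph and the clusters coincide. Your preliminary observation could be shortened by quoting the last sentence of Definition~\ref{definition-2-14} directly, but the logic is the same.
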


\begin{lemma}\label{lemma-cluster-A}
Let $M$ and $F$ be as in Lemma \ref{lemma-five-GHSs}.
For a $\V$-facial cluster having one equivalent class, say  $\varepsilon_\V$, if $V_1$ and $V_2$ are different $\V$-disks in $\varepsilon_\V$ not contained in the center, then there is no $1$-simplex in $\D(F)$ between them.
\end{lemma}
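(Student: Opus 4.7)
\noindent\textit{Proof sketch proposal.} The plan is to argue by contradiction: assume $V_1$ and $V_2$ span a $1$-simplex, so we may choose disjoint representatives with $V_1\neq V_2$ in $\D(F)$. Let $(V_0,W)$ be the center of $\varepsilon_\V$. By Definition \ref{definition-2-14}, each $V_i$ cuts off a uniquely determined solid torus $\V'_i$ from $\V$ ($i=1,2$), and $V_0$ is a meridian disk of each $\V'_i$. Put $C_i=\partial V_i$ and $T_i=\partial_+\V\cap\V'_i$, a once-punctured torus in $F$ with $\partial T_i=C_i$. Since $V_0$ is a meridian of $\V'_i$, we have $V_0\subset\V'_i$ and $\partial V_0\subset T_i$, hence $\partial V_0\subset T_1\cap T_2$; in particular $T_1\cap T_2\neq\emptyset$. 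The goal is to deduce that $C_1$ is isotopic to $C_2$ in $F$, which by Lemma \ref{lemma-isotopic-bd} forces $V_1=V_2$ in $\D(F)$, a contradiction.

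To finish, I would do a short topological case analysis in $F$, using that $C_1,C_2$ are disjoint essential separating curves, each bounding a once-punctured torus, with $T_1\cap T_2\neq\emptyset$. Let $F_1=F-\mathrm{int}(T_1)$, a genus $(n-1)$ surface with one boundary component $C_1$. Since $C_1\cap C_2=\emptyset$, either $C_2\subset T_1$ or $C_2\subset F_1$.

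\textbf{Case A} ($C_2\subset T_1$). Then $C_2$ is essential in $T_1$ (else inessential in $F$), and it cannot be nonseparating in $T_1$ (else nonseparating in $F$, since $F_1$ stays connected after cutting). So $C_2$ is separating in the once-punctured torus $T_1$, and the only essential possibility is that $C_2$ bounds a once-punctured torus on one side and an annulus on the other; the annulus side is cobounded by $C_2$ and $\partial T_1=C_1$, so $C_1\sim C_2$ in $F$. \textbf{Case B} ($C_2\subset F_1$). Since $C_2$ is separating in $F$ and $F_1$ is connected, $C_2$ must separate $F_1$. The component of $F-C_2$ containing $T_1$ is a subsurface $A\cup_{C_1}T_1$ for a component $A$ of $F_1-C_2$; this must be the once-punctured torus $T_2$, since the other component of $F-C_2$ is disjoint from $T_1$ and would violate $T_1\cap T_2\neq\emptyset$. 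Thus $T_1\subsetneq T_2$, and $T_2-T_1$ has $\chi=\chi(T_2)-\chi(T_1)=0$ with two boundary components $C_1,C_2$, so it is an annulus. Hence $C_1\sim C_2$ in $F$, again a contradiction.

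The argument is essentially a surface-topological observation once one recognizes that $\partial V_0$ is the key ``witness'' forcing $T_1\cap T_2\neq\emptyset$; the main thing to be careful about is enforcing essentiality and the separating condition of $C_2$ at every step of the case analysis, which rules out all configurations except the one giving an annulus between $C_1$ and $C_2$. I do not anticipate any deeper obstacle: all heavy lifting has already been done by Lemma \ref{lemma-equivalent}, Definition \ref{definition-2-14}, and Lemma \ref{lemma-isotopic-bd}.
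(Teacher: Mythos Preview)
Your proof is correct and follows essentially the same approach as the paper's. Both arguments observe that the once-punctured tori $T_1,T_2$ each contain $\partial V_0$ (the boundary of the center disk), then use disjointness of $C_1,C_2$ to force nesting of $T_1,T_2$ and conclude via a genus/Euler-characteristic count that the difference is an annulus; the paper simply compresses your Case~A/Case~B dichotomy into the single assertion ``one of $T_1$ and $T_2$ contains the other'' and then runs your Case~B argument once.
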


\begin{proof}
Let $(\bar{V},\bar{W})$ be the center of $\varepsilon_\V$.
For the sake of contradiction, assume that there is a $1$-simplex between $V_1$ and $V_2$ in $\D(F)$.
Considering Definition \ref{definition-2-14}, $\partial V_1$ and $\partial V_2$ cut off once-punctured tori $T_1$ and $T_2$ from $F$, respectively, where $T_i$ is the closure of the component of $F-\partial V_i$ containing $\partial \bar{V}$ for $i=1,2$.
Moreover, considering the assumption that $\partial V_1\cap\partial V_2=\emptyset$, one of $T_1$ and $T_2$ contains the other, say $T_1\subset T_2$.
Here, $\partial V_1$ divides $T_2$ into two pieces, where one is a genus one surface and the other is a planar surface.
Since $V_1$  is a compressing disk of $F$, $\partial V_1$ cannot cut off a disk from $T_2$.
This means  $\partial V_1$ cuts off an annulus from $T_2$ such that the other boundary component is $\partial V_2$, i.e. $\partial V_1$ is isotopic to $\partial V_2$ in $F$.
Therefore, $V_1$ is isotopic to $V_2$ in $\V$ by Lemma \ref{lemma-isotopic-bd}, leading to a contradiction.
This completes the proof.
\end{proof}

\begin{corollary}\label{corollary-cluster-A}
Let $M$ and $F$ be as in Lemma \ref{lemma-five-GHSs}.
For a $\V$-facial cluster having one equivalent class, say $\varepsilon_\V$, there is no $n$-simplex in $\D(F)$ spanned by vertices of $\varepsilon_\V$ for $n \geq 3$.
\end{corollary}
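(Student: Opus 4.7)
The plan is to reduce the statement to a short pigeonhole argument built on top of Lemma \ref{lemma-cluster-A} together with the structural description of $\varepsilon_\V$ provided by Lemma \ref{lemma8} and Definition \ref{definition-2-14}.

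First I would pin down the vertex set of $\varepsilon_\V$. Let $(\bar V,\bar W)$ be the center of $\varepsilon_\V$. By Lemma \ref{lemma8}, any two distinct $\V$-faces in $\varepsilon_\V$ must share the center, and the $\W$-disk of every face in $\varepsilon_\V$ must be $\bar W$ (because the shared weak reducing pair is the center $(\bar V,\bar W)$, whose $\W$-disk is $\bar W$). Hence every face of $\varepsilon_\V$ has the form $\{\bar V,V_i,\bar W\}$, where each $V_i$ is a separating $\V$-disk cutting off a solid torus from $\V$ with $\bar V$ as a meridian disk. Thus the vertex set of $\varepsilon_\V$ is $\{\bar V,\bar W\}\cup\{V_1,V_2,\ldots\}$, in which only $\bar V$ and $\bar W$ lie in the center.

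Next, suppose for contradiction that $\sigma$ is an $n$-simplex of $\D(F)$ with $n\geq 3$ spanned by vertices of $\varepsilon_\V$. Then $\sigma$ has at least four vertices that are pairwise adjacent in $\D(F)$ (i.e., every pair spans a $1$-simplex). Since at most two of its vertices can come from outside $\{V_1,V_2,\ldots\}$, namely $\bar V$ and $\bar W$, the simplex $\sigma$ must contain at least two distinct disks $V_i,V_j$ with $i\neq j$, neither of which lies in the center. But then Lemma \ref{lemma-cluster-A} asserts that no $1$-simplex of $\D(F)$ joins $V_i$ and $V_j$, contradicting the fact that they are vertices of a common simplex $\sigma$.

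I do not expect a serious obstacle here: the content of the argument is concentrated in Lemma \ref{lemma-cluster-A}, and the only thing to verify carefully is the identification of the vertex set of $\varepsilon_\V$, in particular that every face of $\varepsilon_\V$ actually contains both $\bar V$ and $\bar W$. This follows from the connectedness of $\varepsilon_\V$ in the graph of $\V$-faces having one equivalent class together with the uniqueness of the center established in Lemma \ref{lemma8}, so the proof should fit comfortably in a few lines.
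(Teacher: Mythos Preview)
Your proof is correct and follows essentially the same approach as the paper: after noting that at most two vertices of the putative simplex can be $\bar V$ or $\bar W$, the remaining vertices are non-center $\V$-disks, and any two of these contradict Lemma~\ref{lemma-cluster-A}. The paper phrases this as ``remove $\bar V$ and $\bar W$ from $\Delta$ to obtain a subsimplex of dimension at least $n-2\geq 1$,'' but the content is identical.
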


\begin{proof}
Let $(\bar{V},\bar{W})$ be the center of $\varepsilon_\V$.
For the sake of contradiction, assume that there is an $n$-simplex $\Delta$ in $\D(F)$ spanned by vertices of $\varepsilon_\V$ for $n\geq 3$.
Then we can obtain an at least $(n-2)$-subsimplex $\delta$ by removing $\bar{V}$ and $\bar{W}$ from $\Delta$, where $n-2\geq 1$.
Here, $\delta$ consists of $\V$-disks of $\varepsilon_\V$ other than $\bar{V}$, violating Lemma \ref{lemma-cluster-A}.
\end{proof}

\begin{lemma}\label{lemma-cluster-all-simplices}
Let $M$ and $F$ be as in Lemma \ref{lemma-five-GHSs}.
For a $\V$-facial cluster having one equivalent class, say $\varepsilon_\V$, the union of all simplices of $\D(F)$ spanned by the vertices of $\varepsilon_\V$ is $\varepsilon_\V$ itself.
\end{lemma}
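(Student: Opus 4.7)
The plan is to show that the subcomplex of $\D(F)$ spanned by the vertices of $\varepsilon_\V$ contains no simplices other than the $\V$-faces that already constitute $\varepsilon_\V$ (together with their subsimplices).

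First I would pin down the vertex set of $\varepsilon_\V$ using Definition \ref{definition-2-14} and Lemma \ref{lemma8}. Every face in $\varepsilon_\V$ contains the center $(\bar V,\bar W)$; more precisely, $\bar W$ is the common $\W$-disk of all faces in the cluster and $\bar V$ is the common nonseparating $\V$-disk, while every other $\V$-disk $V_i$ appearing in $\varepsilon_\V$ is separating in $\V$ and cuts off the unique solid torus in $\V$ whose meridian is $\bar V$. So the set of vertices appearing in $\varepsilon_\V$ is
\[
\{\bar V,\ \bar W,\ V_1,\ V_2,\ \ldots\},
\]
with $\bar V$ nonseparating, $\bar W \subset \W$, and each $V_i$ a separating $\V$-disk of the type just described.

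Second, I would rule out high-dimensional simplices at once: by Corollary \ref{corollary-cluster-A}, no $n$-simplex of $\D(F)$ with $n\geq 3$ can be spanned by vertices of $\varepsilon_\V$. It therefore suffices to check simplices of dimension $0$, $1$, and $2$. For any such simplex $\sigma$, let $S\subseteq \{\bar V,\bar W,V_1,V_2,\ldots\}$ be its vertex set. By Lemma \ref{lemma-cluster-A}, there is no $1$-simplex of $\D(F)$ between $V_i$ and $V_j$ for $i\neq j$, so $S$ cannot contain two distinct separating $V_i$'s. Hence $S\subseteq \{\bar V,\bar W,V_i\}$ for some single index $i$ (or $S\subseteq\{\bar V,\bar W\}$ if no separating disk occurs in $\sigma$).

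Finally, in either case $\sigma$ is a subsimplex of a $2$-simplex $\{\bar V,V_i,\bar W\}$, which is a face of $\varepsilon_\V$ by the definition of a $\V$-facial cluster having one equivalent class. Therefore $\sigma\subset\varepsilon_\V$, and the union of all simplices of $\D(F)$ spanned by the vertices of $\varepsilon_\V$ is precisely $\varepsilon_\V$ itself. The argument is essentially bookkeeping, so the only real conceptual point is the initial identification of the vertex set via Definition \ref{definition-2-14} and the combined application of Lemma \ref{lemma-cluster-A} and Corollary \ref{corollary-cluster-A}; I do not foresee any further obstacle.
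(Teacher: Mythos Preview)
Your proposal is correct and follows essentially the same approach as the paper: both arguments identify the vertex set via Definition \ref{definition-2-14}, invoke Corollary \ref{corollary-cluster-A} to rule out simplices of dimension at least three, and then use Lemma \ref{lemma-cluster-A} to see that any remaining simplex can involve at most one of the separating $V_i$'s and hence lies in a face $\{\bar V,V_i,\bar W\}\subset\varepsilon_\V$. Your organization is slightly more streamlined than the paper's case-by-case treatment of $1$- and $2$-simplices, but the content is the same.
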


\begin{proof}
Considering that each face of $\varepsilon_\V$ is spanned by three vertices of $\varepsilon_\V$, $\varepsilon_\V$ is contained in the union of all simplices of $\D(F)$ spanned by the vertices of $\varepsilon_\V$.
Hence, we will prove that the union belongs to $\varepsilon_\V$.
It is sufficient to show that every simplex of $\D(F)$ spanned by the vertices of $\varepsilon_\V$ is contained in $\varepsilon_\V$.

Let $(\bar{V},\bar{W})$ be the center of $\varepsilon_\V$.
Suppose there is a $1$-simplex $\sigma$ spanned by two vertices of $\varepsilon_\V$ which is not contained in $\varepsilon_\V$.
If there is a vertex of $\varepsilon_\V$, then it is either (i) $\bar{W}$, (ii) $\bar{V}$, or (iii) another $\V$-disk other than $\bar{V}$.
Considering all combinations of two different vertices of $\varepsilon_\V$ and the shape of $\varepsilon_\V$ (see the third one of Figure \ref{fig-bbs}), if they are of different types among the previous three types, then $\sigma\subset\varepsilon_\V$, violating the assumption.
This means $\sigma$ must connect two $\V$-disks other than $\bar{V}$, violating Lemma \ref{lemma-cluster-A}.
Hence, every $1$-simplex spanned by two vertices of $\varepsilon_\V$ must belong to $\varepsilon_\V$.

Suppose there is a $2$-simplex $\Delta$ spanned by three vertices of $\varepsilon_\V$ which is not contained in $\varepsilon_\V$.
Considering all combinations of three vertices of $\Delta$ similarly as in the previous paragraph, if three vertices are of mutually different types, then $\Delta$ belongs to $\varepsilon_\V$, violating the assumption of $\Delta$.
Hence, there must be two $\V$-disks other than $\bar{V}$ in $\Delta$, violating Lemma \ref{lemma-cluster-A}.
Therefore, every $2$-simplex spanned by three vertices of $\varepsilon_\V$ must belong to $\varepsilon_\V$.

Since there is no simplex of dimension at least three spanned by vertices of $\varepsilon_\V$ by Corollary \ref{corollary-cluster-A}, we don't need to consider such simplices.

This completes the proof.
\end{proof}

From now on, we will find a collection of connected subsets of $\DVW(F)$  such that (i) the GHSs obtained by weak reductions along the weak reducing pairs in such a subset are all equivalent and (ii) every weak reducing pair belongs to exactly one subset among such subsets (see Lemma \ref{lemma-BB-char}) in Definition \ref{definition-3-3-detail}, Definition \ref{definition-3-5-detail}, Definition \ref{definition-3-5-ii-detail} and Definition \ref{definition-3-6}. 

We will subdivide each of the five types of GHSs obtained by weak reductions in Lemma \ref{lemma-five-GHSs} as in the following definition.

\begin{definition}\label{definition-GHSs}
Let $M$ and $F$ be as in Lemma \ref{lemma-five-GHSs}.
Let $\mathbf{H}=(\V_1,\V_2;\bar{F}_V)\cup_{\bar{F}_{VW}}(\W_1,\W_2;\bar{F}_W)$ be the GHS obtained by weak reduction along a weak reducing pair $(V,W)$ from  $(\V,\W;F)$, where $\partial_-\V_2\cap \partial_-\W_1=\bar{F}_{VW}$, $\bar{F}_V\subset \V$, and $\bar{F}_W\subset \W$.
We subdivide each of the five types of GHSs of Lemma \ref{lemma-five-GHSs}, where we will differentiate (1) the case where $V$ or $W$ is nonseparating or it cuts off a solid torus from the relevant compression body and (2) the other cases in each subcase.

Note that if $V$ cuts off a solid torus $\V'$ from $\V$, then $\V'$ must miss $\partial W$ by Lemma \ref{lemma-2-8}.
Likewise, if $V$ cuts off a compression body $\V''$ containing $\partial W$ from $\V$, then $\V''$ is not a solid torus by Lemma \ref{lemma-2-8}.
The symmetric arguments also hold for $W$.

\begin{enumerate}[(a)]
\item Case: $\mathbf{H}$ is of type (a).
	\begin{enumerate}[(i)]
	\item If 
		\begin{enumerate}[(I)]
		\item both $V$ and $W$ are nonseparating or
		\item if there is a separating one, then it cuts off a solid torus from the relevant compression body, 
		\end{enumerate}
		then we say $\mathbf{H}$ is of \textit{type (a)-(i)}.
	\item (I) One of $V$ and $W$ is nonseparating or it cuts off a solid torus from the relevant compression body and (II) the other is separating and it does not cut off a solid torus from the relevant compression body.
	\begin{enumerate}[(i)]
	\item[($\W$)] If $V$ is separating and it does not cut off a solid torus from $\V$, i.e. it cuts off a handlebody $\V'$ of genus at least two from $\V$ such that $\partial W\cap \V'=\emptyset$ by Lemma \ref{lemma-five-GHSs}, then we say $\mathbf{H}$ is of \textit{type (a)-(ii)-$\W$}.
	\item[($\V$)] If $W$ is separating and it does not cut off a solid torus from $\W$, i.e. it cuts off a handlebody $\W'$ of genus at least two from $\W$ such that $\partial V\cap \W'=\emptyset$ by Lemma \ref{lemma-five-GHSs}, then we say $\mathbf{H}$ is of \textit{type (a)-(ii)-$\V$}.
	\end{enumerate}
	\item If (I) both $V$ and $W$ are separating and (II) each of them does not cut off a solid torus from the relevant compression body, i.e. they cut off  handlebodies $\V'$ and $\W'$ of genera at least two from $\V$ and $\W$, respectively, such that $\partial W\cap\V'=\emptyset$ and $\partial V\cap \W'=\emptyset$ by Lemma \ref{lemma-five-GHSs}, then we say $\mathbf{H}$ is of \textit{type (a)-(iii)}.	
	\end{enumerate}
\item Case: $\mathbf{H}$ is of type (b)-$\W$ or type (b)-$\V$.
	\begin{enumerate}[(1)]
	\item[($\W$)] Case: $\mathbf{H}$ is of type (b)-$\W$, i.e. $V$ cuts off a compression body $\V'$ of genus at least one such that $\partial_-\V'\neq\emptyset$ and $\partial W\cap\V'=\emptyset$ by Lemma \ref{lemma-five-GHSs}.
	
	If $W$ is nonseparating or it cuts off a solid torus from $\W$, then we say $\mathbf{H}$ is of \textit{type (b)-$\W$-(i)}.
	Otherwise, $W$ cuts off a handlebody $\W'$ of genus at least two from $\W$ such that $\partial V\cap \W'=\emptyset$ by Lemma \ref{lemma-five-GHSs} and we say $\mathbf{H}$ is of \textit{type (b)-$\W$-(ii)}.
	
	\item[($\V$)] Case: $\mathbf{H}$ is of type (b)-$\V$, i.e. $W$ cuts off a compression body $\W'$ of genus at least one such that $\partial_-\W'\neq\emptyset$ and $\partial V\cap\W'=\emptyset$ by Lemma \ref{lemma-five-GHSs}.
	
		If $V$ is nonseparating or it cuts off a solid torus from $\V$, then we say $\mathbf{H}$ is of \textit{type (b)-$\V$-(i)}.
	Otherwise, $V$ cuts off a handlebody $\V'$ of genus at least two from $\V$ such that $\partial W\cap \V'=\emptyset$ by Lemma \ref{lemma-five-GHSs} and we say $\mathbf{H}$ is of \textit{type (b)-$\V$-(ii)}.
	\end{enumerate}
\item Case: $\mathbf{H}$ is of type (c).

We do not subdivide this case.
\item Case: $\mathbf{H}$ is of type (d).

We do not subdivide this case.
\end{enumerate}
\end{definition}

For a given weak reducing pair $(V,W)$, we can see the GHS obtained by weak reduction along $(V,W)$ belongs to exactly one subcase among the ten types of Definition \ref{definition-GHSs}.

\begin{lemma}\label{lemma-types}
If two GHSs $\mathbf{H}_1=(\bar{F}_{V_1},\bar{F}_{V_1 W_1},\bar{F}_{W_1})$ and $\mathbf{H}_2=(\bar{F}_{V_2},\bar{F}_{V_2 W_2},\bar{F}_{W_2})$ obtained by weak reductions from $(\V,\W;F)$ are equivalent, then they are of the same type in the sense of Definition \ref{definition-GHSs}.
This induces that the GHSs obtained by weak reductions along the weak reducing pairs in a given $\V$- or $\W$-facial cluster having one equivalent class are of the same type in the sense of Definition \ref{definition-GHSs}.
\end{lemma}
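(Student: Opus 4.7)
The plan is to combine Lemma \ref{lemma-isotopic-same-type} with a direct genus count on the thick and thin levels. Since equivalence in particular entails that $\mathbf{H}_1$ and $\mathbf{H}_2$ are isotopic as GHSs (condition (i) of Definition \ref{definition-HS-GHS}), Lemma \ref{lemma-isotopic-same-type} already forces them into the same main type among (a), (b)-$\W$, (b)-$\V$, (c), (d) of Lemma \ref{lemma-five-GHSs}. Types (c) and (d) admit no finer subdivision in Definition \ref{definition-GHSs}, so the conclusion is immediate there; I focus on types (a), (b)-$\W$, and (b)-$\V$.

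The key observation will be that conditions (ii), (iii), (iv) of equivalence pair up corresponding thick and inner thin levels via isotopies in $\V$, $\W$, and $M$, each of which preserves the genus of every component. Hence $g(\bar{F}_{V_1})=g(\bar{F}_{V_2})$, $g(\bar{F}_{W_1})=g(\bar{F}_{W_2})$, and $g(\bar{F}_{V_1W_1})=g(\bar{F}_{V_2W_2})$. By Corollary \ref{corollary-five-GHSs}, $\bar{F}_{V_i}$ is a push-off of the component of $F_{V_i}$ containing $\partial W_i$, and I will show that its genus equals $n-1$ precisely when $V_i$ is either nonseparating in $\V$ or separating and cuts off a solid torus from $\V$, while it equals $n-g'\leq n-2$ whenever $V_i$ is separating and cuts off a handlebody of genus $g'\geq 2$. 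This is a direct reading of the case analysis in the proof of Lemma \ref{lemma-five-GHSs}, using Lemma \ref{lemma-2-8} to ensure that $\partial W_i$ lies in the large component of $F_{V_i}$ (so that this is the component that survives the cleaning). A symmetric dictionary holds for $W_i$ and $g(\bar{F}_{W_i})$.

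With this dictionary in hand, a brief inspection of Definition \ref{definition-GHSs} shows that within type (a) the four subtypes (i), (ii)-$\W$, (ii)-$\V$, (iii) correspond to the pair $(g(\bar{F}_{V_i}),g(\bar{F}_{W_i}))$ being $(n-1,n-1)$, $(\leq n-2,n-1)$, $(n-1,\leq n-2)$, $(\leq n-2,\leq n-2)$ respectively; within type (b)-$\W$ the choice of (i) versus (ii) is detected by $g(\bar{F}_{W_i})=n-1$ versus $g(\bar{F}_{W_i})\leq n-2$, and type (b)-$\V$ is symmetric. Since the three genera agree across $\mathbf{H}_1$ and $\mathbf{H}_2$, so do their subtypes. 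The moreover clause is then an immediate consequence: by Lemma \ref{lemma8}, every GHS obtained from a weak reducing pair in a $\V$- or $\W$-facial cluster having one equivalent class is equivalent to the GHS at the center, so all such GHSs share the same subtype by the first part.

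The only nontrivial step is establishing the genus dictionary above, but this is essentially a bookkeeping exercise carried out during the proof of Lemma \ref{lemma-five-GHSs}, so I do not anticipate any serious obstacle.
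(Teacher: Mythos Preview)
Your approach is correct and takes a genuinely simpler route than the paper's. The paper establishes two intermediate claims (its Claim~1 and Claim~2) giving the full three-way dichotomy for each of $V_i$ and $W_i$ (nonseparating/solid torus, handlebody of genus $\geq 2$, compression body with nonempty $\partial_-$), and the separating case there requires tracking the isotopy $h_t:\V\to\V$ sending $\bar F_{V_2}$ to $\bar F_{V_1}$ and comparing the complementary compression bodies $\tilde\V_1''$ and $\tilde\V_2''$ via Lemma~1.3 of \cite{ScharlemannThompson1993}. You bypass this entirely by noting that once Lemma~\ref{lemma-isotopic-same-type} has pinned down the coarse type, the constraints recorded in Lemma~\ref{lemma-five-GHSs} already force, within type~(a), any separating $V_i$ to cut off a handlebody, and within type~(b)-$\W$ the subdivision depends only on $W_i$ (which again cuts off a handlebody when separating); hence the single integer $g(\bar F_{V_i})$ (resp.\ $g(\bar F_{W_i})$) suffices to separate the subtypes. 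What you gain is brevity; what the paper's argument gains is the stronger standalone statements of Claim~1 and Claim~2, though these are not used elsewhere. Your genus dictionary is indeed just bookkeeping from Corollary~\ref{corollary-five-GHSs} together with Lemma~\ref{lemma-2-8} (to place $\partial W_i$ in the non-torus component when $V_i$ cuts off a solid torus), so the plan goes through without obstruction.
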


\begin{proof}
Since $\mathbf{H}_1$ is isotopic to $\mathbf{H}_2$ in $M$, $\bar{F}_{V_1 W_1}$ is isotopic to $\bar{F}_{V_2 W_2}$ in $M$ as in the proof of Lemma \ref{lemma-isotopic-same-type}.
This means if one of $\mathbf{H}_1$ and $\mathbf{H}_2$ is of type (d), where this is the only case where the inner thin level is disconnected, then the other is also of type (d), leading to the result.
Hence, assume both $\mathbf{H}_1$ and $\mathbf{H}_2$ are not of type (d), i.e. both $\bar{F}_{V_1 W_1}$ and $\bar{F}_{V_2 W_2}$ are connected.

\begin{claim}\label{claim-1}
\begin{enumerate}[(i)]
\item $V_1$ is nonseparating or cuts off a solid torus missing $\partial W_1$ from $\V$ if and only if $V_2$ is nonseparating or cuts off a solid torus missing $\partial W_2$  from $\V$.\label{Claim-aaa-i}
\item $V_1$ cuts off a handlebody of genus at least two missing $\partial W_1$ from $\V$ if and only if $V_2$ cuts off a handlebody of genus at least two missing $\partial W_2$ from $\V$.
\item $V_1$ cuts off a compression body with nonempty negative boundary missing $\partial W_1$ from $\V$ if and only if $V_2$ cuts off a compression body with nonempty negative boundary missing $\partial W_2$ from $\V$.
\end{enumerate}
\end{claim}

\begin{proofN}{Claim \ref{claim-1}}
 
 \Case{a} At least one of $V_1$ and $V_2$ is nonseparating in $\V$.

In this case, we restrict our attention to (\ref{Claim-aaa-i}) of Claim \ref{claim-1} which is the only statement dealing with at least one nonseparating disk.

If both $V_1$ and $V_2$ are nonseparating, then there is nothing to prove.

Hence, assume $V_i$ is nonseparating and $V_j$ is separating in $\V$ for $i\neq j$.
Since $\bar{F}_{V_j}$ is isotopic to $\bar{F}_{V_i}$ in $\V$ by the assumption that $\mathbf{H}_j$ is equivalent to $\mathbf{H}_i$, Corollary \ref{corollary-five-GHSs} and (\ref{lemma-2-21-2}) of Lemma \ref{lemma-2-21} induce $V_j$ cuts off a solid torus $\V'$ from $\V$ and $\V'$ misses $\partial W_j$ by Lemma \ref{lemma-2-8}.
This means that we have proved  (\ref{Claim-aaa-i}) of Claim \ref{claim-1} except the case where both $V_1$ and $V_2$ are separating.

This completes the proof of  Case a.\\

 \Case{b} Both $V_1$ and $V_2$ are separating in $\V$.
 
By Corollary \ref{corollary-five-GHSs}, $\bar{F}_{V_i}$ is obtained by pushing the component of $F_{V_i}$ containing $\partial W_i$ slightly into $\mathrm{int}(\V)$ for $i=1,2$.
Therefore, there is a region cobounded by $V_i$, $\bar{F}_{V_i}$ and a component of $F-\partial V_i$ in $\V$, say $\tilde{\V}_i$, which is homeomorphic to $\bar{F}_{V_i}\times I$ such that  the $0$-level is $\bar{F}_{V_i}$ and the $1$-level contains $\partial W_i$ for $i=1,2$.
Let $\tilde{\V}'_1$ be the closure of the component of $\V-\bar{F}_{V_1}$ missing $\partial_+\V$  and $\tilde{\V}''_1$ the closure of the component of $\V-\tilde{\V}_1$ intersecting $\partial_+\V$.
Since $\mathbf{H}_1$ and $\mathbf{H}_2$ are equivalent, there is an isotopy $h_t:\V\to\V$, $t\in[0,1]$ such that $h_1(\bar{F}_{V_2})=\bar{F}_{V_1}$ and therefore $h_1(\tilde{\V}_2)$ is homeomorphic to $\bar{F}_{V_1}\times I$  whose $0$-level is $\bar{F}_{V_1}$.
Here, $\operatorname{cl}(\V-h_1(\tilde{\V}_2))$ consists of two components, where  the component  missing $\partial_+\V$ is $\tilde{\V}_1'$ and the other component, say $\tilde{\V}''_2$, intersects $\partial_+\V$.

Then we can see
$$\V=\tilde{\V}''_1\cup_{V_1} (\tilde{\V}_1\cup_{\bar{F}_{V_1}} \tilde{\V}'_1)=\tilde{\V}''_2\cup_{h_1(V_2)} (h_1(\tilde{\V}_2)\cup_{\bar{F}_{V_1}} \tilde{\V}'_1),$$
where $\partial W_1\cap\tilde{\V}''_1=\emptyset$ and $h_1(\partial W_2)\cap\tilde{\V}''_2=\emptyset$, and both $\tilde{\V}''_1$ and $\tilde{\V}''_2$ are compression bodies such that $\partial_-\tilde{\V}''_1\subset\partial_-\V$ and $\partial_-\tilde{\V}''_2\subset\partial_-\V$ by Lemma 1.3 of \cite{ScharlemannThompson1993} because we get $\tilde{\V}''_1$ and $\tilde{\V}''_2$  by cutting $\V$ off  along $V_1$ and $h_1(V_2)$, respectively.
Moreover, $\tilde{\V}_1\cup_{\bar{F}_{V_1}} \tilde{\V}'_1$ and $h_1(\tilde{\V}_2)\cup_{\bar{F}_{V_1}} \tilde{\V}'_1$ are compression bodies such that one is homeomorphic to the other and they share the common negative boundary.
This means (1) the genus of $\tilde{\V}''_1$ is the same as that of $\tilde{\V}''_2$ and (2) $\tilde{\V}''_1$ is a handlebody if and only if $\tilde{\V}''_2$ is a handlebody.
That is, $\tilde{\V}''_1$ is 
\begin{enumerate}[(i)]
\item a solid torus ($=V_1$ cuts off a solid torus missing $\partial W_1$ from $\V$),
\item a handlebody of genus at least two ($=V_1$ cuts off a handlebody of genus at least two missing $\partial W_1$ from $\V$), or 
\item a compression body with nonempty negative boundary ($=V_1$ cuts off a compression body with nonempty negative boundary missing $\partial W_1$ from $\V$)
\end{enumerate}
 if and only if $\tilde{\V}''_2$ is 
\begin{enumerate}[(i)]
\item a solid torus ($=h_1(V_2)$ cuts off a solid torus missing $h_1(\partial W_2)$ from $\V$),
\item a handlebody of genus at least two ($=h_1(V_2)$ cuts off a handlebody of genus at least two missing $h_1(\partial W_2)$ from $\V$), or 
\item a compression body with nonempty negative boundary ($=h_1(V_2)$ cuts off a compression body with nonempty negative boundary missing $h_1(\partial W_2)$ from $\V$),
\end{enumerate}
respectively. 
Since $h_t$ is an isotopy defined on $\V$, considering the latter part of the previous ``if and only if'' statement, it is equivalent to the statement that $V_2$ cuts off (i) a solid torus missing $\partial W_2$, (ii) a handlebody of genus at least two missing $\partial W_2$, or (iii) a compression body with nonempty negative boundary missing $\partial W_2$ from $\V$, respectively.
This completes the proof of Case b.\\

This completes the proof of Claim \ref{claim-1}.
\end{proofN}

Similarly, we get Claim \ref{claim-2}.

\begin{claim}\label{claim-2}
\begin{enumerate}[(i)]
\item $W_1$ is nonseparating or cuts off a solid torus missing $\partial V_1$ from $\W$ if and only if $W_2$ is nonseparating or cuts off a solid torus missing $\partial V_2$ from $\W$.
\item $W_1$ cuts off a handlebody of genus at least two missing $\partial V_1$ from $\W$ if and only if $W_2$ cuts off a handlebody of genus at least two missing $\partial V_2$ from $\W$.
\item $W_1$ cuts off a compression body with nonempty negative boundary missing $\partial V_1$ from $\W$ if and only if $W_2$ cuts off a compression body with nonempty negative boundary missing $\partial V_2$ from $\W$.
\end{enumerate}
\end{claim}

Recall that $\mathbf{H}_1$ and $\mathbf{H}_2$ are of the same type in the sense of Lemma \ref{lemma-five-GHSs} by Lemma \ref{lemma-isotopic-same-type}. 
Hence, considering each case of Definition \ref{definition-GHSs} for $\mathbf{H}_1$ and $\mathbf{H}_2$ awaring of Claim \ref{claim-1} and Claim \ref{claim-2} (excluding type (d)), we conclude $\mathbf{H}_1$ and $\mathbf{H}_2$ are of the same type in the sense of Definition \ref{definition-GHSs}.

This completes the proof.
\end{proof}

Considering Lemma \ref{lemma-types}, we can say an equivalent class of $\mathcal{GHS}_F^\ast/\sim$ is of one type among the ten types in Definition \ref{definition-GHSs}.
Moreover, we can say a $\V$- or $\W$-facial cluster having one equivalent class $\varepsilon$ is of one type among these ten types by the second statement of Lemma \ref{lemma-types} (but we will exclude type (a)-(iii), type (b)-$\W$-(ii), type (b)-$\V$-(ii), type (c) and type (d) cases later).

\begin{lemma}\label{lemma-restrict} 
Let $M$ and $F$ be as in Lemma \ref{lemma-five-GHSs}.
For a $k$-simplex $\Sigma$ in $\DVW(F)$ having at least one weak reducing pair, if the GHSs obtained by weak reductions along the weak reducing pairs in $\Sigma$ are all equivalent, then $k\leq 3$.
Moreover, if $k=3$, then $\Sigma=\{V_1, V_2, W_1, W_2\}$, where $V_2$ and $W_2$ cut off solid tori $\V'$ and $\W'$ from $\V$ and $\W$, respectively, and  $V_1$ and $W_1$ are meridian disks of $\V'$ and $\W'$, respectively.
This means if $k\geq 4$, then  there are at least two weak reducing pairs in $\Sigma$ such that they give nonequivalent GHSs after weak reductions.
\end{lemma}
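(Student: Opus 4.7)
The plan is to bound $|\Sigma\cap\DV(F)|$ and $|\Sigma\cap\DW(F)|$ each by $2$ via Lemma~\ref{lemma-equivalent} combined with Lemma~\ref{lemma8}, and then read off the structure in the extremal case. First I would fix any $\W$-disk $W\in\Sigma\cap\DW(F)$, which exists because $\Sigma\subset\DVW(F)$ contains a weak reducing pair. For any two distinct $\V$-disks $V_i,V_j\in\Sigma\cap\DV(F)$, the triangle $\{V_i,V_j,W\}$ is a $\V$-face whose two weak reducing pairs $(V_i,W)$ and $(V_j,W)$ both lie in $\Sigma$, so by hypothesis it is a $\V$-face having one equivalent class; Lemma~\ref{lemma-equivalent} then forces exactly one of $V_i,V_j$ to be nonseparating in $\V$ (as a meridian disk of a solid torus) and the other to be separating (cutting off that solid torus).

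The main step is to show $|\Sigma\cap\DV(F)|\leq 2$. Assume for contradiction that there are three distinct $\V$-disks $V_1,V_2,V_3\in\Sigma$. Then $\{V_1,V_2,W\}$ and $\{V_1,V_3,W\}$ are two different $\V$-faces having one equivalent class that share the weak reducing pair $(V_1,W)$, so Lemma~\ref{lemma8} forces $V_1$ to be nonseparating and $V_2,V_3$ to be separating in $\V$. But then the $\V$-face $\{V_2,V_3,W\}$ also has one equivalent class while both of its $\V$-disks are separating, contradicting Lemma~\ref{lemma-equivalent}. The symmetric argument on the $\W$-side gives $|\Sigma\cap\DW(F)|\leq 2$, so $k+1=|\Sigma|\leq 4$, i.e.\ $k\leq 3$, and the contrapositive statement for $k\geq 4$ is immediate.

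Finally, in the case $k=3$ we must have $\Sigma=\{V_1,V_2,W_1,W_2\}$ with two disks on each side. Applying Lemma~\ref{lemma-equivalent} to the $\V$-face $\{V_1,V_2,W_1\}$, after relabeling we may assume $V_2$ cuts off a solid torus $\V'$ from $\V$ and $V_1$ is a meridian disk of $\V'$; the face $\{V_1,V_2,W_2\}$ yields the same nonseparating/separating split because these properties are intrinsic to the disks in $\V$. Symmetrically, $W_2$ cuts off a solid torus $\W'$ from $\W$ and $W_1$ is a meridian disk of $\W'$. I expect the main (though modest) obstacle to be the three-disk step: one must verify carefully that the two $\V$-faces picked there are genuinely different (they are, since $V_2\neq V_3$) so that Lemma~\ref{lemma8} actually applies; everything else is direct bookkeeping from the already-established lemmas.
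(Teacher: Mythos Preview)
Your proof is correct and follows essentially the same approach as the paper: bound the number of $\V$-disks (and symmetrically $\W$-disks) by two using the $\V$-faces formed with a fixed $\W$-disk together with Lemma~\ref{lemma-equivalent} and Lemma~\ref{lemma8}, then read off the $k=3$ structure from Lemma~\ref{lemma-equivalent}. The only cosmetic difference is in how the contradiction is phrased---the paper deduces that some $V_i$ is both separating and nonseparating, while you observe that $\{V_2,V_3,W\}$ would have two separating $\V$-disks---but these are equivalent and equally valid.
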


\begin{proof}
Suppose the GHSs obtained by weak reductions along the weak reducing pairs in the $k$-simplex $\Sigma$ are all equivalent, where $\Sigma=\{V_1, \cdots, V_m, W_1, \cdots, W_n\}$, where $V_i\subset \V$ for $1\leq i \leq m$ $(1\leq m)$, $W_i\subset \W$ for $1\leq i \leq n$ $(1\leq n)$, and $m+n=k+1$.
Assume $m>2$.
Then each of the three $\V$-faces $\Delta_1=\{V_1,V_2,W_1\}$, $\Delta_2=\{V_2,V_3,W_1\} $ and $\Delta_3=\{V_1,V_3,W_1\}$ has one equivalent class by the assumption.
Considering $\Delta_1$ and $\Delta_2$, $V_2$ is nonseparating and $V_1$ and $V_3$ are separating in $\V$ by Lemma \ref{lemma8}.
But considering $\Delta_2$ and $\Delta_3$, $V_3$ is nonseparating in $\V$ by Lemma \ref{lemma8}, leading to a contradiction.
Hence, we get $m\leq 2$.
Similarly, we get $n\leq 2$ and therefore $k+1=m+n\leq 4$, i.e. $k\leq 3$.
The second statement is obtained by applying Lemma \ref{lemma-equivalent} to the $\V$-face $\{V_1,V_2,W_1\}$ and the $\W$-face $\{V_1,W_1,W_2\}$, where each of them has one equivalent class by the assumption.
This completes the proof.
\end{proof}

\begin{lemma}\label{lemma-3-2}
Let $M$ and $F$ be as in Lemma \ref{lemma-five-GHSs}.
Let $\varepsilon_\V$ and $\varepsilon_\W$ be a $\V$-facial cluster having one equivalent class and a $\W$-facial cluster having one equivalent class such that they share the common center.
Then $\varepsilon_\V$ and $\varepsilon_\W$ have  the same equivalent class of type (a)-(i).
\end{lemma}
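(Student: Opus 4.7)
The plan is to split the statement into two pieces: that $\varepsilon_\V$ and $\varepsilon_\W$ correspond to a single equivalent class, and that this class is of type (a)-(i) in the sense of Definition \ref{definition-GHSs}. The first piece I would get immediately from the last sentence of Lemma \ref{lemma8}: every weak reduction along a weak reducing pair in $\varepsilon_\V$ is equivalent to the GHS $\mathbf{H}_0$ obtained along the common center $(\bar{V},\bar{W})$, and the same holds for $\varepsilon_\W$, so every GHS produced from either cluster represents the single class $[\mathbf{H}_0]^\ast$.

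Next I would read off the structure of the center from Definition \ref{definition-2-14}. Any face $\{V',\bar{V},\bar{W}\}\in \varepsilon_\V$ other than the center has $V'$ separating and cutting off a solid torus $\V'\subset \V$, with $\bar{V}$ a meridian disk of $\V'$. Such a meridian is nonseparating in $\V$, since writing $\V=\V''\cup_{V'}\V'$ and cutting along $\bar{V}$ only opens $\V'$ into a $3$-ball while $\V''$ stays connected. The symmetric observation applied to $\varepsilon_\W$ yields a separating $W'\subset \W$ cutting off a solid torus $\W'\subset \W$ for which $\bar{W}$ is a nonseparating meridian disk.

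The main obstacle is verifying the last hypothesis of type (a)-(i), namely that $\partial\bar{V}\cup\partial\bar{W}$ is nonseparating in $F$. Let $T_\V=\operatorname{cl}(\V'\cap F)$ and $T_\W=\operatorname{cl}(\W'\cap F)$ be the two once-punctured tori in $F$ containing $\partial\bar{V}$ and $\partial\bar{W}$, respectively. Because $V'\subset \V$ and $W'\subset \W$ meet $F$ only in their boundaries and $\partial V'\cap\partial W'=\emptyset$, we get $V'\cap W'=\emptyset$; hence $(V',W')$ is itself a weak reducing pair, and since the splitting is unstabilized in an irreducible $M$ it cannot be a reducing pair, so $\partial V'$ is not isotopic to $\partial W'$ in $F$. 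The key step is to rule out $\partial V'\subset T_\W$: essentiality of $\partial V'$ in $F$ forces essentiality in $T_\W$, and every essential simple closed curve in a once-punctured torus is either nonseparating (impossible, since $\partial V'$ separates $F$) or parallel to $\partial T_\W=\partial W'$ (impossible by the previous sentence). A symmetric argument rules out $\partial W'\subset T_\V$, so $T_\V\cap T_\W=\emptyset$. With $T_\V$ and $T_\W$ disjoint, each of $T_\V-\partial\bar{V}$ and $T_\W-\partial\bar{W}$ is a pair of pants, and reattaching them to the connected piece $F-(T_\V\cup T_\W)$ along $\partial V'$ and $\partial W'$ keeps $F-(\partial\bar{V}\cup\partial\bar{W})$ connected. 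Combined with $\bar{V}$ and $\bar{W}$ nonseparating, this places $\mathbf{H}_0$ in case (\ref{lemma-3-1-a-i}) of Lemma \ref{lemma-five-GHSs}, so Definition \ref{definition-GHSs} classifies $\mathbf{H}_0$ as type (a)-(i), which is the desired conclusion.
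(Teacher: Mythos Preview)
Your argument that $\varepsilon_\V$ and $\varepsilon_\W$ share one equivalent class, and that $\bar V,\bar W$ are nonseparating, is fine. The problem is the assertion ``$\partial V'\cap\partial W'=\emptyset$''. The disk $V'$ comes from a face $\{V',\bar V,\bar W\}$ of $\varepsilon_\V$ and $W'$ from a face $\{\bar V,\bar W,W'\}$ of $\varepsilon_\W$; these are two distinct $2$-simplices sharing only the edge $\{\bar V,\bar W\}$, and nothing in the hypotheses forces $V'$ to miss $W'$. Without this disjointness your comparison of $T_\V$ against $T_\W$ and the conclusion $T_\V\cap T_\W=\emptyset$ do not go through. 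The gap is reparable: you need only one of the two auxiliary disks. Using just $V'$ (which is disjoint from both $\bar V$ and $\bar W$), the once-punctured torus $T_\V$ contains $\partial\bar V$ and misses $\partial\bar W$ by Lemma~\ref{lemma-2-8}; since $\partial\bar W$ is nonseparating in the complementary component of $F_{V'}$ by Lemma~\ref{lemma-pre}, cutting $F$ along $\partial\bar V\cup\partial\bar W$ leaves it connected.

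The paper's proof bypasses this topological computation entirely. It observes that the center $(\bar V,\bar W)$ consists of nonseparating disks, so by Definition~\ref{definition-GHSs} the GHS $\mathbf H_0$ is of type (a)-(i) or type (d). It then rules out type (d) indirectly: by Lemma~\ref{lemma-types} every weak reducing pair in $\varepsilon_\V\cup\varepsilon_\W$ yields a GHS of the \emph{same} type as $\mathbf H_0$, but any non-center pair contains a separating disk and hence cannot give type (d) by Lemma~\ref{lemma-five-GHSs}. This is shorter and avoids analyzing curves on $F$ altogether; your route, once repaired, gives a more hands-on reason why the separating case (d) is excluded.
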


\begin{proof}
Considering Lemma \ref{lemma8}, the GHSs obtained by weak reductions along the weak reducing pairs in $\varepsilon_\V\cup\varepsilon_\W$ are all equivalent to the GHS corresponding to the common center.
Moreover, they are of the same type in the sense of  Definition \ref{definition-GHSs} by Lemma \ref{lemma-types}.
Since $\varepsilon_\V$ is a $\V$-facial cluster having one equivalent class and $\varepsilon_\W$ is a $\W$-facial cluster having one equivalent class, the common center consists of nonseparating disks by Definition \ref{definition-2-14}, i.e.  the center gives a GHS of type (a)-(i) or type (d) after weak reduction by Definition \ref{definition-GHSs}.
But every weak reducing pair in $\varepsilon_\V\cup\varepsilon_\W$ other than the common center has a separating disk by Definition \ref{definition-2-14} and therefore it cannot give a type (d) GHS after weak reduction by Lemma  \ref{lemma-five-GHSs}.
Hence, we conclude both $\varepsilon_\V$ and $\varepsilon_\W$ are of type (a)-(i).
This completes the proof.
\end{proof}

\begin{lemma}
\label{lemma-type-a-i}
Let $M$ and $F$ be as in Lemma \ref{lemma-five-GHSs} and let $(V,W)$ be a weak reducing pair such that the GHS obtained by weak reduction along $(V,W)$ is of type (a)-(i).
Then there are a naturally determined $\V$-facial cluster having one equivalent class, say  $\varepsilon_\V$, and a naturally determined $\W$-facial cluster having one equivalent class, say $\varepsilon_\W$, satisfying the following:
\begin{enumerate}
\item $\varepsilon_\V$ and $\varepsilon_\W$  share the common center.\label{lemma-type-a-i-1}
\item Let $\Sigma$ be the union of all simplices of $\D(F)$ spanned by the vertices of $\varepsilon_\V\cup\varepsilon_\W$.
Then $\Sigma$ contains $(V,W)$.\label{lemma-type-a-i-2}
\item The GHSs obtained by weak reductions along the weak reducing pairs in $\Sigma$ are all equivalent.\label{lemma-type-a-i-3}
\end{enumerate} 
\end{lemma}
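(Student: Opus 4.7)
The plan is to reduce $(V,W)$ to a nonseparating ``companion'' weak reducing pair and then construct the clusters as the unique ones with this companion as common center. First I would set $V^\ast=V$ if $V$ is nonseparating in $\V$, and otherwise, since $\mathbf{H}$ is of type (a)-(i), $V$ cuts off a solid torus $\V'$ from $\V$ and I let $V^\ast$ be the unique meridian disk of $\V'$ (Lemma~\ref{lemma-2-19}); define $W^\ast$ symmetrically. Applying Lemma~\ref{lemma-equivalent} to the $\V$-face $\{V,V^\ast,W\}$ (when $V\neq V^\ast$) and to the $\W$-face $\{V^\ast,W,W^\ast\}$ (when $W\neq W^\ast$) shows that the GHSs along $(V,W)$ and along $(V^\ast,W^\ast)$ are equivalent, and that both $V^\ast$ and $W^\ast$ are nonseparating in the relevant compression bodies.

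Next I would construct $\varepsilon_\V$ as the unique $\V$-facial cluster having one equivalent class with center $(V^\ast,W^\ast)$; uniqueness is Corollary~\ref{corollary-DorE}, and existence follows from producing at least two pairwise non-isotopic separating disks $V^\sharp\subset\V$, each disjoint from $V^\ast$ and $W^\ast$ and each cutting off a solid torus having $V^\ast$ as its meridian. Such $V^\sharp$ arise by the standard tubing construction: pick a simple arc $\alpha\subset F$ whose endpoints lie on opposite sides of $\partial V^\ast$ and whose interior avoids $\partial W^\ast$, and take $V^\sharp$ to be the frontier in $\V$ of a regular neighborhood of $V^\ast\cup\alpha$. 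By Lemma~\ref{lemma-equivalent} each $\{V^\ast,V^\sharp,W^\ast\}$ is a $\V$-face having one equivalent class, and they all share $(V^\ast,W^\ast)$, so by definition of the graph of $\V$-faces having one equivalent class they lie in a single connected component $\varepsilon_\V$, whose center is $(V^\ast,W^\ast)$ by Lemma~\ref{lemma8}. Construct $\varepsilon_\W$ symmetrically, giving~(\ref{lemma-type-a-i-1}).

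For~(\ref{lemma-type-a-i-2}), note that $V$ is a vertex of $\varepsilon_\V$: either $V=V^\ast$, or $\{V^\ast,V,W^\ast\}$ is itself a $\V$-face of $\varepsilon_\V$ by the same Lemma~\ref{lemma-equivalent} argument, since $V$ cuts off a solid torus of which $V^\ast$ is the meridian; symmetrically $W\in\varepsilon_\W$, so the $1$-simplex $\{V,W\}$ is spanned by vertices of $\varepsilon_\V\cup\varepsilon_\W$ and hence lies in $\Sigma$. For~(\ref{lemma-type-a-i-3}), any weak reducing pair $(V',W')\subset\Sigma$ has $V'\in\varepsilon_\V\cap\V$ and $W'\in\varepsilon_\W\cap\W$, and chaining Lemma~\ref{lemma-equivalent} through the $\V$-face $\{V',V^\ast,W'\}$ and the $\W$-face $\{V^\ast,W',W^\ast\}$ (with each step collapsing trivially when the relevant disks coincide) yields equivalence of the GHS along $(V',W')$ with that along $(V^\ast,W^\ast)$, and hence with that along $(V,W)$. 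The main technical obstacle is the geometric construction of infinitely many suitable arcs $\alpha$: one must confirm that the two sides of $\partial V^\ast$ lie in the same component of $F\setminus(\partial V^\ast\cup\partial W^\ast)$, equivalently that $\partial V^\ast\cup\partial W^\ast$ is nonseparating in $F$, by a sub-case analysis of type (a)-(i)---in case~(\ref{lemma-3-1-a-i}) of Lemma~\ref{lemma-five-GHSs} this is the defining condition, and when one or both of $V,W$ is separating it follows from $\partial V^\ast$ and $\partial W^\ast$ being placed in disjoint once-punctured tori cut off from $F$, whose complementary subsurface is connected because $g(F)\geq 3$.
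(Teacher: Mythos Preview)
Your approach is essentially the same as the paper's: both reduce $(V,W)$ to a nonseparating companion pair $(V^\ast,W^\ast)=(\bar V,\bar W)$, build the two facial clusters around this common center (existence via a band-sum/tubing construction, uniqueness via Corollary~\ref{corollary-DorE}), and then verify~(\ref{lemma-type-a-i-3}) by chaining Lemma~\ref{lemma-equivalent} through a $\V$-face and a $\W$-face. The paper additionally proves a structural characterization of $\Sigma$ as a union of ``condition~($\ast$)'' $3$-simplices (its Claim~\ref{claim-5}), which it needs later; your direct chaining argument for~(\ref{lemma-type-a-i-3}) is more economical for this lemma alone and is correct, since every $\V$-vertex of $\varepsilon_\V\cup\varepsilon_\W$ already lies in $\varepsilon_\V$ and every $\W$-vertex in $\varepsilon_\W$.

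One small imprecision: your case analysis for why $\partial V^\ast\cup\partial W^\ast$ is nonseparating does not cover the case where exactly one of $V,W$ is separating---there the two curves are \emph{not} both placed in once-punctured tori. The clean fix is to skip the case analysis entirely: you have already shown that the GHS along $(V^\ast,W^\ast)$ is equivalent to that along $(V,W)$, so by Lemma~\ref{lemma-types} it is of type~(a)-(i), hence of type~(a) in Lemma~\ref{lemma-five-GHSs}; since $V^\ast$ and $W^\ast$ are both nonseparating, this forces sub-case~(\ref{lemma-3-1-a-i}) there, whose defining condition is exactly $\partial V^\ast\cup\partial W^\ast$ nonseparating. (Also, a single $V^\sharp$ suffices once you invoke Lemma~\ref{lemma-DorE}; you need not produce two.)
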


\begin{proof}
Take $\bar{V}$ (resp $\bar{W}$) as $V$ (resp $W$) itself if $V$ (resp $W$) is nonseparating.
If $V$ is separating, then it cuts off a solid torus $\V'$ from $\V$ such that $\partial W\cap\V'=\emptyset$ by Definition \ref{definition-GHSs} and therefore we can choose a meridian disk $\bar{V}$ of $\V'$ missing $V\cup W$.
Similarly, if $W$ is separating, then it cuts off a solid torus $\W'$ from $\W$ such that $\partial V\cap\W'=\emptyset$ and therefore we can choose a meridian disk $\bar{W}$ of $\W'$ missing $V\cup W$.
If both $V$ and $W$ are separating, then $\V'\cap\W'=\emptyset$ because the assumption that $\partial W\cap \V'=\emptyset$ and $\partial V\cap \W'=\emptyset$ in Definition \ref{definition-GHSs} means the two once-punctured tori cut by $\partial W$ and $\partial V$ from $F$ are pairwise disjoint, i.e. the four disks $V$, $\bar{V}$, $\bar{W}$ and $W$ are pairwise disjoint.

Hence, we can find a weak reducing pair $(\bar{V},\bar{W})$ such that either
\begin{enumerate}
\item $(V,W)$ is $(\bar{V},\bar{W})$ itself,\label{definition-3-3-1}
\item $(V,W)$ is contained in a $\V$-face $\Delta_\V=\{V,\bar{V},\bar{W}=W\}$ and it has one equivalent class by Lemma \ref{lemma-equivalent}\label{definition-3-3-2},
\item $(V,W)$ is contained in a $\W$-face $\Delta_\W=\{\bar{V}=V,\bar{W},W\}$ and it has one equivalent class by Lemma \ref{lemma-equivalent}\label{definition-3-3-3}, or
\item $(V,W)$ is contained in a $3$-simplex $\Sigma_{VW}=\{V,\bar{V},\bar{W},W\}$ such that $\bar{V}$ and $\bar{W}$ are meridian disks of the solid tori that $V$ and $W$ cut off from $\V$ and $\W$, respectively. \label{definition-3-3-4}
\end{enumerate}
Here, $(\bar{V},\bar{W})$ is uniquely determined in $\D(F)$ by $(V,W)$ because the meridian disk of a solid torus is unique up to isotopy.

From now on, if we say a $3$-simplex $\Sigma_{VW}$ \textit{satisfies the condition ($\ast$)} in the proof of this lemma, then it will mean that $V\subset \V$ and $W\subset \W$ cut off solid tori from $\V$ and $\W$, respectively, and the other two disks of $\Sigma_{VW}$ are  meridian disks of the solid tori, respectively, as in the case (\ref{definition-3-3-4}), where it is the same condition for the $3$-simplex $\Sigma$ in the statement of Lemma \ref{lemma-restrict}.

\begin{claim}\label{claim-AB}
$(V,W)$ is contained in a $3$-simplex of the form $$\Sigma_{V'W'}=\{V',\bar{V},\bar{W},W'\}$$ satisfying the condition ($\ast$) in any case as well as the case (\ref{definition-3-3-4}).
\end{claim}
\begin{proofN}{Claim \ref{claim-AB}}
Let us consider the case (\ref{definition-3-3-2}).
Then (i) $\partial V$ divides $F$ into a once-punctured genus at least two surface $F'$ and a once-punctured torus $F''$ and (ii) $\partial \bar{V}\subset F''$ and $\partial\bar{W}\subset F'$ because $\partial\bar{W}$ $(=\partial W)$ misses the solid torus $\V'$.
Here, $\partial\bar{W}$ is nonseparating in $F'$ by Lemma \ref{lemma-pre}.
Hence, we can find a band-sum of two parallel copies of $\bar{W}$ in $\W$, say $W'$,  such that $\partial W'\subset F'$ (if we use the expression ``\textit{two parallel copies of $\bar{W}$ in $\W$}'', then we will assume that there is a product neighborhood $\bar{W}\times I$ of $\bar{W}$ in $\W$ such that the disks  $\bar{W}\times \{0,1\}$ are the two parallel copies of $\bar{W}$ and $\bar{W}$ is  $\bar{W}\times \{1/2\}$ and see Figure 2 of \cite{JungsooKim2012} for the idea of ``\textit{band-sum}''), i.e. $W'$ cuts off a solid torus $\W'$ from $\W$ missing the $\V$-disks of $\Delta_\V$ and $\bar{W}$ is a meridian disk of $\W'$.
Hence, $(V,W)$ belongs to the $3$-simplex $\Sigma_{V'W'}=\{V=V',\bar{V},W=\bar{W},W'\}$ satisfying the condition  ($\ast$).
The symmetric argument also holds for the case (\ref{definition-3-3-3}).

Let us consider the case (\ref{definition-3-3-1}).
Then we can find a band-sum  of two parallel copies of $V$ (resp $W$) in $\V$ (resp $\W$), say $V'$ (resp $W'$), such that  $V'\cap W'=\emptyset$ by using two disjoint simple arcs in $F$ connecting two parallel copies of $V$ and $W$, respectively, because $\partial V\cup\partial W$ is nonseparating in $F$ by Lemma \ref{lemma-five-GHSs} and therefore we can find such two disjoint arcs in the connected surface $F_{VW}$ whose genus is at least one, where the arcs connect two scars of $V$ and those of  $W$, respectively. 
Hence, $\Sigma_{V'W'}=\{V',V=\bar{V},W=\bar{W},W'\}$ forms a $3$-simplex satisfying the condition  ($\ast$).

This completes the proof of Claim \ref{claim-AB}.
\end{proofN}

Hence, considering the $\V$-face $\{V',\bar{V},\bar{W}\}$ and the $\W$-face $\{\bar{V},\bar{W},W'\}$  contained in the $3$-simplex $\Sigma_{V'W'}$, we can see $V$ and $W$ belong to these $\V$- and $\W$-faces, respectively, and each face has one equivalent class  by Lemma \ref{lemma-equivalent}.
Using Lemma \ref{lemma-DorE}, we can guarantee the existence of the $\V$-facial cluster  having one equivalent class, say $\varepsilon_\V$, and the $\W$-facial cluster  having one equivalent class, say $\varepsilon_\W$, such that each of them contains $(\bar{V},\bar{W})$ and both  $V$ and $W$ belong to $\varepsilon_\V\cup\varepsilon_\W$.
Moreover, the GHSs obtained by weak reductions along the weak reducing pairs in $\varepsilon_\V\cup\varepsilon_\W$ are all equivalent because $\varepsilon_\V$ and $\varepsilon_\W$ share $(\bar{V},\bar{W})$.

We can observe the following (see Figure \ref{fig-bb-a} and Definition \ref{definition-2-14}):
\begin{enumerate}[(a)]
\item Only $\bar{V}$ and $\bar{W}$ are nonseparating disks among the vertices of $\varepsilon_\V\cup\varepsilon_\W$.\label{definition-3-3-preclaim-a}
\item Each of $\bar{V}$ and $\bar{W}$ is connected to every other vertex of $\varepsilon_\V\cup\varepsilon_\W$ by a $1$-simplex in $\varepsilon_\V\cup\varepsilon_\W$. \label{definition-3-3-preclaim-b}
\end{enumerate}
\begin{figure}
\includegraphics[width=4cm]{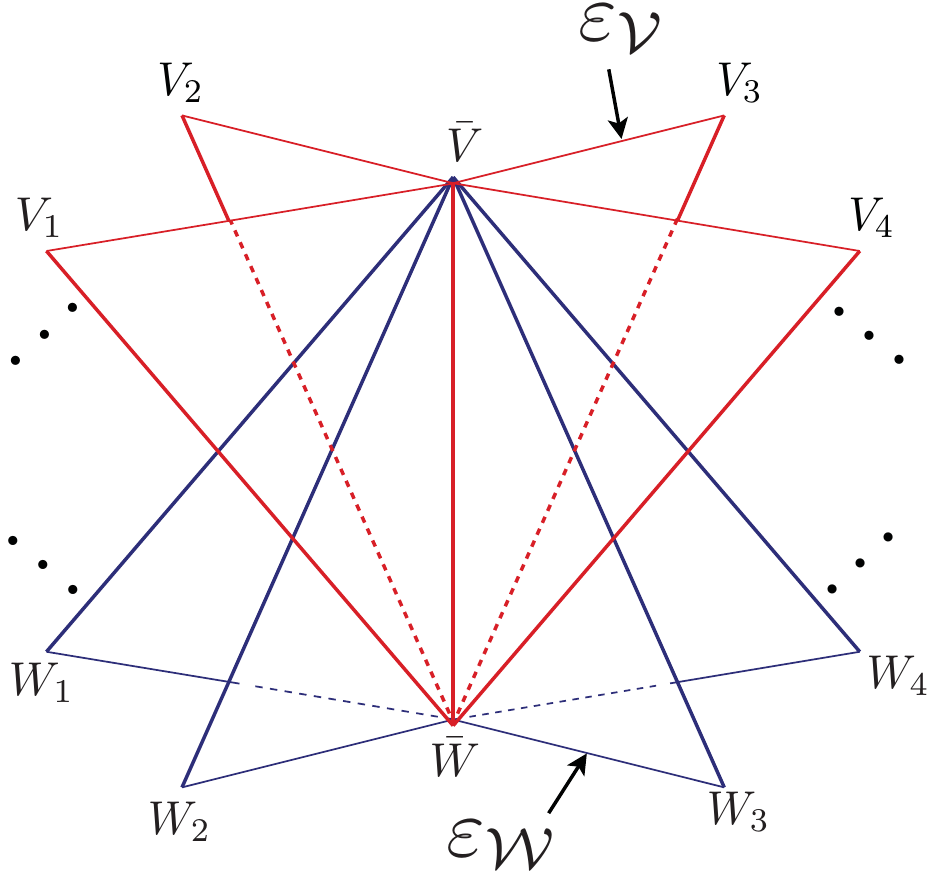}
\caption{$\varepsilon_\V\cup\varepsilon_\W$ \label{fig-bb-a}}
\end{figure}

Let $\Sigma$ be the union of all simplices of $\D(F)$ spanned by the vertices of $\varepsilon_\V\cup\varepsilon_\W$ as in the statement of this lemma.

\begin{claim}\label{claim-4}
There is no $1$-simplex in $\D(F)$ spanned by two separating disks of $\Sigma\cap\DV(F)$ (resp $\Sigma\cap\DW(F)$).
\end{claim}

\begin{proofN}{Claim \ref{claim-4}}
For the sake of contradiction, assume there is a $1$-simplex $\sigma$ in $\D(F)$ spanned by two separating disks of $\Sigma\cap\DV(F)$.
By (\ref{definition-3-3-preclaim-a}), the vertices of $\sigma$ are two disks of $\varepsilon_\V\cap\DV(F)$ other than $\bar{V}$, violating Lemma \ref{lemma-cluster-A}.
This completes the proof of Claim \ref{claim-4}.
\end{proofN}

\begin{claim}\label{claim-5}
$\Sigma$ is equal to the union of all $\Sigma_{V'W'}$ satisfying the condition ($\ast$) and  containing $(\bar{V},\bar{W})$.
\end{claim}

\begin{proofN}{Claim \ref{claim-5}}
Let us consider the union of all $\Sigma_{V'W'}$ satisfying the condition ($\ast$) and containing  $(\bar{V},\bar{W})$.
Considering the condition ($\ast$) for each $\Sigma_{V'W'}$ in the union, $V'$ and $W'$ cut off solid tori from $\V$ and $\W$, respectively, and $\bar{V}$ and $\bar{W}$ are meridian disks of the solid tori, respectively, and therefore each of the $\V$-face $\Delta_\V'=\{V',\bar{V},\bar{W}\}$ and the $\W$-face $\Delta_\W'=\{\bar{V},\bar{W},W'\}$ has one equivalent class by Lemma \ref{lemma-equivalent}.
Considering the $\V$-facial cluster having one equivalent class $\varepsilon_\V'$ which contains $\Delta_\V'$, where the existence and uniqueness are guaranteed by Lemma \ref{lemma-DorE}, the center must be $(\bar{V},\bar{W})$ by Definiton \ref{definition-2-14} because $\bar{V}$ is nonseparating.
But Corollary \ref{corollary-DorE} forces $\varepsilon_\V'$ to be $\varepsilon_\V$ because they share the common center, i.e. $\Delta_\V'\subset \varepsilon_\V$.
Likewise, we get $\Delta_\W'\subset \varepsilon_\W$.
This means $\Sigma_{V'W'}$ itself is a $3$-simplex spanned by four vertices of $\varepsilon_\V\cup\varepsilon_\W$ and therefore the union belongs to $\Sigma$.
Hence, we will prove $\Sigma$ belongs to the union.

It is sufficient to show that each simplex in $\Sigma$ belongs to some $\Sigma_{V'W'}$ in the union.
We will consider $1$- or more simplices in $\Sigma$. 
(Since every vertex of $\Sigma$ is contained in a $\V$-face of $\varepsilon_\V$ or a $\W$-face of $\varepsilon_\W$ which is a $2$-simplex in $\Sigma$, if we can prove it for every $2$-simplex in $\Sigma$, then all vertices in $\Sigma$ would belong to the union.)\\

\Case{a} $\sigma\subset\Sigma$ is a $1$-simplex.

If $\sigma$ is $(\bar{V},\bar{W})$, then there is nothing to prove.
Assume $\sigma\neq(\bar{V},\bar{W})$.
Suppose $\sigma$ contains a nonseparating disk, i.e. it consists of $\bar{V}$ or $\bar{W}$ and a separating disk by (\ref{definition-3-3-preclaim-a}).
Since the separating disk of $\sigma$ is also a vertex of $\varepsilon_\V\cup\varepsilon_\W$, $\sigma$ is the $1$-simplex in $\varepsilon_\V\cup\varepsilon_\W$  described in (\ref{definition-3-3-preclaim-b}), i.e. there is a $\V$- or $\W$-face  having one equivalent class containing $\sigma$, say $\Delta$,  which is contained in $\varepsilon_\V$ or $\varepsilon_\W$, respectively, and therefore it also contains the common center $(\bar{V},\bar{W})$.
If $\Delta$ is a $\V$-face, then one of the $\V$-disks of $\Delta$ cuts off a solid torus from $\V$ and the other is the meridian disk of it by Lemma \ref{lemma-equivalent} and vise versa. 
Hence, if we use the proof of Claim \ref{claim-AB} corresponding to the case (\ref{definition-3-3-2}) or (\ref{definition-3-3-3}), then we can find a $3$-simplex $\Sigma_{V'W'}$ containing $\Delta$ such that it satisfies the condition  ($\ast$) and contains  $(\bar{V},\bar{W})$, leading to the result.

Hence, assume $\sigma$ consists of two separating disks.
If $\sigma$ is not a weak reducing pair, then this violates Claim \ref{claim-4}.
Hence, $\sigma$ is a weak reducing pair, say  $(V',W')$, i.e. we can find a $\V$-face $\Delta_\V^\sigma=\{V',\bar{V},\bar{W}\}$ in  $\varepsilon_\V$ intersecting $\sigma$ and a $\W$-face $\Delta_\W^\sigma=\{\bar{V},\bar{W},W'\}$ in  $\varepsilon_\W$ intersecting $\sigma$.
This means $\Delta_\V^\sigma$, $\Delta_\W^\sigma$ and $\sigma$ form  a $3$-simplex $\Sigma_{V'W'}=\{V',\bar{V},\bar{W},W'\}$.
Since each of $\Delta_\V^\sigma$ and $\Delta_\W^\sigma$ has one equivalent class, considering Lemma \ref{lemma-equivalent}, we conclude $\Sigma_{V'W'}$ satisfies the condition ($\ast$), leading to the result.\\

\Case{b} $\Delta\subset\Sigma$ is a $2$-simplex.

If $\Delta$ is not a $\V$- or $\W$-face, then either $\Delta\subset \DV(F)$ or $\Delta\subset\DW(F)$.
Without loss of generality, assume $\Delta\subset\DV(F)$.
Then there is at least one $1$-simplex $\sigma\subset\Delta$ spanned by two separating disks of $\Sigma\cap\DV(F)$ by (\ref{definition-3-3-preclaim-a}), violating Claim \ref{claim-4}.

Hence, $\Delta$ is a $\V$- or $\W$-face.

If $\Delta$ belongs to $\varepsilon_\V\cup\varepsilon_\W$, then  we can find a $3$-simplex $\Sigma_{V'W'}$ satisfying the condition ($\ast$) and containing $\Delta$ by the same argument for $\Delta$ in Case a.
Since $\Delta$ contains $(\bar{V},\bar{W})$, this leads to the result.

If $\Delta$ does not belong to $\varepsilon_\V\cup\varepsilon_\W$, then exactly one of $\bar{V}$ and $\bar{W}$ does not belong to $\Delta$. ((i) If $\Delta$ contains both $\bar{V}$ and $\bar{W}$, then $\Delta$ is formed by the common center of $\varepsilon_\V$ and $\varepsilon_\W$ and the other disk coming from $\varepsilon_\V$ or $\varepsilon_\W$, i.e. $\Delta$ is a $\V$-face of $\varepsilon_\V$ or a $\W$-face of $\varepsilon_\W$, leading to a contradiction, and (ii) if $\Delta$ misses both $\bar{V}$ and $\bar{W}$, then $\Delta$ consists of two separating $\V$-disks and one separating $\W$-disk by (\ref{definition-3-3-preclaim-a}) or vise versa.
Then there is a $1$-simplex $\sigma\subset\Delta$ spanned by two separating disks of $\Sigma\cap\DV(F)$ or $\Sigma\cap\DW(F)$, respectively, violating Claim \ref{claim-4}.)

Without loss of generality, assume $\bar{V}\notin\Delta$ and $\bar{W}\in\Delta$.
In this case, $\Delta$ must be a $\W$-face otherwise
there is a $1$-simplex $\sigma\subset\Delta$ spanned by two separating disks of $\Sigma\cap\DV(F)$ by (\ref{definition-3-3-preclaim-a}), violating Claim \ref{claim-4}.
Moreover, considering (\ref{definition-3-3-preclaim-b}), there is a $1$-simplex connecting $\bar{V}$ and each vertex of $\Delta$.
This means $\Delta$ forms a $3$-simplex together with $\bar{V}$, where  this $3$-simplex contains $(\bar{V},\bar{W})$ and consists of two disks from $\V$ and two disks from $\W$.
Here, the two disks of this $3$-simplex other than $\bar{V}$ and $\bar{W}$ cut off solid tori from the relevant compression bodies and $\bar{V}$ and $\bar{W}$ are meridian disks of the solid tori, respectively,  by Definition \ref{definition-2-14} (because the vertices come from $\varepsilon_\V\cup\varepsilon_\W$) and therefore this $3$-simplex has the form $\Sigma_{V'W'}=\{V',\bar{V},\bar{W},W'\}$ satisfying the condition  ($\ast$), leading to the result.\\

\Case{c} $\Sigma'\subset\Sigma$ is a $3$-simplex.

If $\Sigma'$ contains a $2$-simplex $\Delta$ which is not a $\V$- or $\W$-face, then we get a contradiction as in the start of Case b.
This means $\Sigma'$ has the form $\{V_1,V_2,W_1,W_2\}$ for $V_i\subset\V$ and $W_i\subset \W$ for $i=1,2$.
Moreover, applying Claim \ref{claim-4} to the $1$-simplices $\{V_1,V_2\}$ and $\{W_1,W_2\}$, we can assume $V_1=\bar{V}$ and $W_1=\bar{W}$ by (\ref{definition-3-3-preclaim-a}).
Hence, considering the last sentence of Case b, we can see $\Sigma'$ itself has the form $\Sigma_{V_2 W_2}=\{V_2,\bar{V}=V_1,\bar{W}=W_1,W_2\}$ satisfying the condition  ($\ast$), leading to the result.\\

If there is a simplex of dimension at least four in $\Sigma$, i.e. it consists of at least five disks, then there must be a $2$-simplex in $\Sigma$  which is neither a $\V$-face nor a $\W$-face, leading to a contradiction as in the start of Case b.
Therefore, we do not need to consider more high dimensional simplices.

This completes the proof of Claim \ref{claim-5}.
\end{proofN}

Considering the proof of Claim \ref{claim-5}, the $1$-simplex $(V,W)$ (which is spanned by two vertices of $\varepsilon_\V\cup\varepsilon_\W$) belongs to $\Sigma$.
Since the common center $(\bar{V},\bar{W})$ of $\varepsilon_\V$ and $\varepsilon_\W$ is uniquely determined by $(V,W)$,  $\varepsilon_\V$ and $\varepsilon_\W$ are also uniquely determined by $(V,W)$ by Corollary \ref{corollary-DorE}.
This completes the proof of all statements of this Lemma other than (\ref{lemma-type-a-i-3}).

\begin{claim}\label{claim-3}
The GHSs obtained by weak reductions along the weak reducing pairs in a $3$-simplex  $\Sigma_{V'W'}$ satisfying the condition ($\ast$) are all equivalent.
\end{claim}

\begin{proofN}{Claim \ref{claim-3}}
Let $\Sigma_{V'W'}=\{V',\tilde{V},\tilde{W},W'\}$ and suppose $(D, E)$ is a weak reducing pair in $\Sigma_{V'W'}$.
We will prove the GHS obtained by weak reduction along $(D,E)$ is equivalent to that obtained by weak reduction along $(\tilde{V},\tilde{W})$.

First, considering $\Sigma_{V'W'}$ satisfies the condition ($\ast$), we can see each of the $\V$-face $\Delta_{\V}=\{V',\tilde{V},\tilde{W}\}$ and the $\W$-face $\Delta_{\W}=\{\tilde{V},\tilde{W},W'\}$ has one equivalent class by Lemma \ref{lemma-equivalent}.

If one of $D$ and $E$ is $\tilde{V}$ or $\tilde{W}$, then $(D,E)$ belongs to $\Delta_{\W}$ or $\Delta_{\V}$, respectively, leading to the result.

Suppose $D=V'$ and $E=W'$.
In this case, we get the $\V$-face $\Delta_{\V}'=\{V',\tilde{V},W'\}$ containing $(V',W')$ and  $\Delta_{\V}'$ has one equivalent class by Lemma \ref{lemma-equivalent}.
Moreover, $\Delta_{\V}'$ shares the weak reducing pair $(\tilde{V},W')$ with $\Delta_{\W}$, i.e. $(D,E)$ gives a GHS equivalent to that obtained by using $(\tilde{V},\tilde{W})$ after weak reduction.

This completes the proof of Claim \ref{claim-3}.
\end{proofN}

Considering the proof of Claim \ref{claim-5}, any weak reducing pair $(D,E)$ in $\Sigma$ belongs to a $3$-simplex $\Sigma_{V'W'}$ satisfying the condition  ($\ast$) and containing $(\bar{V},\bar{W})$.
Moreover, considering  Claim \ref{claim-3}, we conclude the GHS obtained by weak reduction along $(D,E)$ is equivalent to the GHS obtained by weak reduction along $(\bar{V},\bar{W})$, i.e. the GHSs obtained by weak reductions along the weak reducing pairs in $\Sigma$ are all equivalent to that corresponding to $(\bar{V},\bar{W})$.
This completes the proof of (\ref{lemma-type-a-i-3}).

This completes the proof.
\end{proof}

\begin{definition}
\label{definition-3-3-detail}
Let $M$ and $F$ be as in Lemma \ref{lemma-five-GHSs}. 
An \textit{equivalent cluster of type (a)-(i)} is the union of all simplices of $\DVW(F)$ spanned by the vertices of $\varepsilon_\V\cup\varepsilon_\W$, where $\varepsilon_\V$ is a $\V$-facial cluster having one equivalent class, $\varepsilon_\W$ is a $\W$-facial cluster having one equivalent class, and they share the common center.
We call the common center of $\varepsilon_\V$ and $\varepsilon_\W$  the \textit{center} of the equivalent cluster.
Lemma \ref{lemma-type-a-i} means every weak reducing pair giving a type (a)-(i) GHS after weak reduction belongs to a naturally determined equivalent cluster of type (a)-(i).
\end{definition}

\begin{lemma}
An equivalent cluster of type (a)-(i) is contractible.
\end{lemma}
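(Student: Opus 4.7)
The plan is to leverage the structural description of an equivalent cluster of type (a)-(i) that was already established in Claim \ref{claim-5} during the proof of Lemma \ref{lemma-type-a-i}. There we proved that the cluster $\Sigma$ coincides with the union of all $3$-simplices of the form $\Sigma_{V'W'}=\{V',\bar{V},\bar{W},W'\}$ satisfying the condition $(\ast)$ and containing the center $(\bar{V},\bar{W})$. In particular, every maximal simplex of $\Sigma$ contains the vertex $\bar{V}$, so $\Sigma$ is geometrically star-shaped about $\bar{V}$.

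With this observation in hand, I would define a straight-line deformation retraction $H\colon\Sigma\times[0,1]\to\Sigma$ by the formula $H(x,t)=(1-t)x+t\bar{V}$, where the convex combination is computed inside any $3$-simplex $\Sigma_{V'W'}$ of $\Sigma$ that contains $x$. Restricted to a single $3$-simplex this is a continuous map whose image lies in that same $3$-simplex (hence in $\Sigma$), which fixes $\bar{V}$ and sends every point to $\bar{V}$ at time $t=1$.

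The one point requiring verification is well-definedness on overlaps: if $x$ lies simultaneously in two $3$-simplices $\Sigma_{V'_1 W'_1}$ and $\Sigma_{V'_2 W'_2}$ of $\Sigma$, then $x$ belongs to their intersection, which is a common face of both in $\D(F)$ containing $\bar{V}$, and the linear interpolation toward $\bar{V}$ computed inside either $3$-simplex agrees on this face. Continuity of $H$ on $\Sigma$ then follows from the gluing lemma applied to the closed cover of $\Sigma$ by its $3$-simplices. Since $H$ is a deformation retraction of $\Sigma$ onto the one-point space $\{\bar{V}\}$, the cluster $\Sigma$ is contractible. I do not foresee any real obstacle: Claim \ref{claim-5} has already carried out the combinatorial work of identifying $\Sigma$ as a union of $3$-simplices sharing the common vertex $\bar{V}$, and what remains is the standard observation that a geometric star is contractible.
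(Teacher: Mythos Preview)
Your argument is correct and aligns with the paper's own treatment. The paper does not spell out the contraction at all: it simply observes that, in view of Claim \ref{claim-5}, the shape of a type (a)-(i) equivalent cluster coincides with a configuration already analyzed in a prior article (Case (a) in the proof of Lemma 3.5 of \cite{JungsooKim2014}), where contractibility was established, and so the proof is omitted. Your write-up makes explicit exactly the mechanism that underlies that citation: by Claim \ref{claim-5} every maximal simplex of the cluster is a $3$-simplex containing the fixed vertex $\bar{V}$ (indeed the full center $(\bar{V},\bar{W})$), so the cluster is the simplicial star of $\bar{V}$ in the relevant subcomplex, and the straight-line homotopy to $\bar{V}$ gives a deformation retraction. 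Your check of well-definedness on overlaps and your appeal to the weak topology on a simplicial complex for continuity are both appropriate, and the possibly infinite number of $3$-simplices causes no difficulty precisely because continuity in a simplicial complex is detected cellwise.
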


\begin{proof}
Considering Claim \ref{claim-5}, the shape of an equivalent cluster of type (a)-(i) is essentially the same as that of $\DVW(F)$ of Case (a) in the proof of Lemma 3.5 of \cite{JungsooKim2014} and it is contractible.
Therefore, we omit the proof.
\end{proof}

\begin{lemma}
\label{lemma-type-a-ii}
Let $M$ and $F$ be as in Lemma \ref{lemma-five-GHSs} and let $(V,W)$ be a weak reducing pair such that  the GHS obtained by weak reduction along $(V,W)$ is of type (a)-(ii)-$\W$ or type (b)-$\W$-(i).
Then there is a naturally determined $\W$-facial cluster having one equivalent class containing $(V,W)$.
Likewise,  if the GHS obtained by weak reduction along $(V,W)$ is of type (a)-(ii)-$\V$ or type (b)-$\V$-(i), then there is a naturally determined $\V$-facial cluster having one equivalent class containing $(V,W)$.
\end{lemma}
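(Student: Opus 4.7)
The plan is to imitate the strategy of Lemma \ref{lemma-type-a-i}, but working only on the $\W$ side: I will produce a $\W$-face having one equivalent class that contains the weak reducing pair $(V,W)$, and then invoke Lemma \ref{lemma-DorE} to extract the unique $\W$-facial cluster having one equivalent class that contains this face.

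By inspection of Definition \ref{definition-GHSs}, in both type (a)-(ii)-$\W$ and type (b)-$\W$-(i) the disk $W$ is either nonseparating in $\W$ or else it cuts off a solid torus $\W'$ from $\W$ with $\partial V\cap\W'=\emptyset$ (the latter by Lemma \ref{lemma-2-8}). In the solid-torus subcase let $\bar{W}$ be the meridian disk of $\W'$, which is unique up to isotopy by Lemma \ref{lemma-2-19}; since $V\cap\W'=\emptyset$, we have $V\cap\bar{W}=\emptyset$, so $\{V,W,\bar{W}\}$ is a $\W$-face, and Lemma \ref{lemma-equivalent} shows it has one equivalent class ($W$ cuts off the solid torus and $\bar{W}$ is its meridian). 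In the nonseparating subcase Lemma \ref{lemma-pre} implies that $\partial W$ is nonseparating in the component $F_0$ of $F-\partial V$ containing it, so there is an arc in $F_0$ joining the two scars of $W$ in $F_W$; band-summing two parallel copies of $W$ inside $\W$ along this arc, in the spirit of the proof of Claim \ref{claim-AB}, produces a separating disk $W'\subset \W$ bounding a solid torus in $\W$ whose meridian is $W$, and $W'\cap V=\emptyset$ by the choice of arc. Thus $\{V,W,W'\}$ is a $\W$-face, again with one equivalent class by Lemma \ref{lemma-equivalent}.

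Applying Lemma \ref{lemma-DorE} to the constructed $\W$-face produces a $\W$-facial cluster having one equivalent class that contains $(V,W)$. For natural determination I observe that, by Definition \ref{definition-2-14}, the center of this cluster must be the weak reducing pair of the face whose $\W$-disk is nonseparating: this is $(V,W)$ itself when $W$ is nonseparating, and $(V,\bar{W})$ when $W$ cuts off a solid torus. Since $\bar{W}$ is uniquely determined up to isotopy, the center is canonically determined by $(V,W)$, and Corollary \ref{corollary-DorE} then guarantees that the $\W$-facial cluster having one equivalent class with this center is unique.

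The $\V$-analog, treating the cases type (a)-(ii)-$\V$ and type (b)-$\V$-(i), follows verbatim by exchanging the roles of $\V$ and $\W$. The main technical point relative to Lemma \ref{lemma-type-a-i} is the one-sided band-sum construction in the nonseparating subcase and the verification that the resulting disk is disjoint from $V$; both follow immediately from Lemma \ref{lemma-pre}.
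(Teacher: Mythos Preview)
Your proof is correct and follows essentially the same approach as the paper's: split into the cases where $W$ is nonseparating versus cuts off a solid torus, produce in each case a $\W$-face having one equivalent class containing $(V,W)$ (via a band-sum in the nonseparating case, via the meridian $\bar W$ in the solid-torus case), and then invoke Lemma~\ref{lemma-DorE} and Corollary~\ref{corollary-DorE} to obtain and uniquely determine the $\W$-facial cluster. The only cosmetic differences are that the paper names the canonical pair $(\bar V,\bar W)$ up front and explicitly records (via Lemma~\ref{lemma-2-8}) that the component of $F_V$ containing $\partial W$ has genus at least two so that the band-sum disk is genuinely a compressing disk; you use this implicitly, and your citation of Lemma~\ref{lemma-2-19} for the uniqueness of the meridian of a solid torus is overkill but harmless.
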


\begin{proof}
Suppose the GHS obtained by weak reduction along $(V,W)$  is of type (a)-(ii)-$\W$ or type (b)-$\W$-(i).
We can see the following from Definition \ref{definition-GHSs}:
\begin{enumerate}
\item $W$ is nonseparating or $W$ cuts off a solid torus $\W'$ missing $\partial V$ from $\W$.
\item Either $V$ cuts off a handlebody of genus at least two from $\V$ missing $\partial W$ (the type (a)-(ii)-$\W$ case) or a compression body with nonempty negative boundary missing $\partial W$ from $\V$ (the type (b)-$\W$-(i) case).
\end{enumerate}
Let $\bar{V}$ be $V$ itself.
Take $\bar{W}$ as $W$ itself if $W$ is nonseparating in $\W$.
If $W$ is separating, then we take $\bar{W}$ as a meridian disk of $\W'$.
Hence, we can find a weak reducing pair $(\bar{V},\bar{W})$ such that either
\begin{enumerate}
\item $(V,W)$ is $(\bar{V},\bar{W})$ itself, or\label{definition-3-5-1}
\item $(V,W)$ is contained in the $\W$-face $\Delta_\W=\{\bar{V},\bar{W},W\}$ and $\Delta_\W$ has one equivalent class by Lemma \ref{lemma-equivalent}.\label{definition-3-5-2}
\end{enumerate}
Here, $(\bar{V},\bar{W})$ is uniquely determined in $\D(F)$ by $(V,W)$ because the meridian disk of a solid torus is unique up to isotopy.

Consider the case (\ref{definition-3-5-1}).
Then $\partial \bar{W}$ is nonseparating in the relevant component of $F_{\bar{V}}$ by Lemma \ref{lemma-pre}, where this component is of genus at least two by Lemma \ref{lemma-2-8}, and therefore we can take a band-sum of two parallel copies of $W=\bar{W}$ in $\W$, say $W'$, such that $\Delta=\{V=\bar{V},W=\bar{W},W'\}$ is a $\W$-face.
Moreover, $\Delta$  has one equivalent class by Lemma \ref{lemma-equivalent}.
Hence, we can guarantee the existence of the $\W$-facial cluster having one equivalent class, say $\varepsilon_\W$, such that $\varepsilon_\W$ contains $(V,W)$ in the case (\ref{definition-3-5-1})  by Lemma \ref{lemma-DorE} as well as the case (\ref{definition-3-5-2}).
Since the center $(\bar{V},\bar{W})$ of $\varepsilon_\W$ is uniquely determined by $(V,W)$,  $\varepsilon_\W$ is uniquely determined by $(V,W)$ by Corollary \ref{corollary-DorE}.

This completes the proof.
\end{proof}

\begin{definition}
\label{definition-3-5-detail}
Let $M$ and $F$ be as in Lemma \ref{lemma-five-GHSs}.
Let $\varepsilon_\W$ be a $\W$-facial cluster having one equivalent class such that the GHSs obtained by weak reductions along the weak reducing pairs in $\varepsilon_\W$ are either of type (a)-(ii)-$\W$ or type (b)-$\W$-(i).
We call $\varepsilon_\W$ an \textit{equivalent cluster of type (a)-(ii)-$\W$} or \textit{(b)-$\W$-(i)}, respectively.
We call the center of $\varepsilon_\W$ the \textit{center} of the equivalent cluster.

Lemma \ref{lemma-type-a-ii} means every weak reducing pair giving a type (a)-(ii)-$\W$ or (b)-$\W$-(i) GHS after weak reduction belongs to a naturally determined equivalent cluster of type (a)-(ii)-$\W$ or type (b)-$\W$-(i), respectively.
\end{definition}

Likewise,  we get the following definition.

\begin{definition}
\label{definition-3-5-ii-detail}
Let $M$ and $F$ be as in Lemma \ref{lemma-five-GHSs}.
Let $\varepsilon_\V$ be a $\V$-facial cluster having one equivalent class such that the GHSs obtained by weak reductions along the weak reducing pairs in $\varepsilon_\V$ are either of type (a)-(ii)-$\V$ or type (b)-$\V$-(i).
We call $\varepsilon_\V$ an \textit{equivalent cluster of type (a)-(ii)-$\V$} or \textit{type (b)-$\V$-(i)}, respectively.
We call the center of $\varepsilon_\V$ the \textit{center} of the equivalent cluster.

Lemma \ref{lemma-type-a-ii} means every weak reducing pair giving a type (a)-(ii)-$\V$ or (b)-$\V$-(i) GHS after weak reduction belongs to a naturally determined equivalent cluster of type (a)-(ii)-$\V$ or type (b)-$\V$-(i), respectively.
\end{definition}

Since a $\W$- or $\V$-facial cluster having one equivalent class is contractible, an equivalent cluster of type (a)-(ii)-$\W$,  type (b)-$\W$-(i), type (a)-(ii)-$\V$ or type (b)-$\V$-(i) is contractible.
Moreover, Lemma \ref{lemma-cluster-all-simplices} means an equivalent cluster of type (a)-(ii)-$\W$, type (b)-$\W$-(i), type (a)-(ii)-$\V$ or type (b)-$\V$-(i) is the union of all simplices of $\D(F)$ spanned by the vertices of the cluster.

\begin{definition}\label{definition-3-6}
Let $M$ and $F$ be as in Lemma \ref{lemma-five-GHSs}.
Suppose the GHS obtained by weak reduction along $(V,W)$ is of type (a)-(iii), type (b)-$\W$-(ii), type (b)-$\V$-(ii), type (c), or type (d).
In this case, we call the weak reducing pair $(V,W)$ itself \textit{an equivalent cluster of type (a)-(iii)}, \textit{type (b)-$\W$-(ii)}, \textit{type (b)-$\V$-(ii)}, \textit{type (c)}, or \textit{type (d)}, respectively.
We also define the \textit{center} of the equivalent cluster $(\bar{V},\bar{W})$ as $(V,W)$ itself.
\end{definition}

See Figure \ref{fig-bbs} for the shapes of equivalent clusters.
\begin{figure}
\includegraphics[width=10cm]{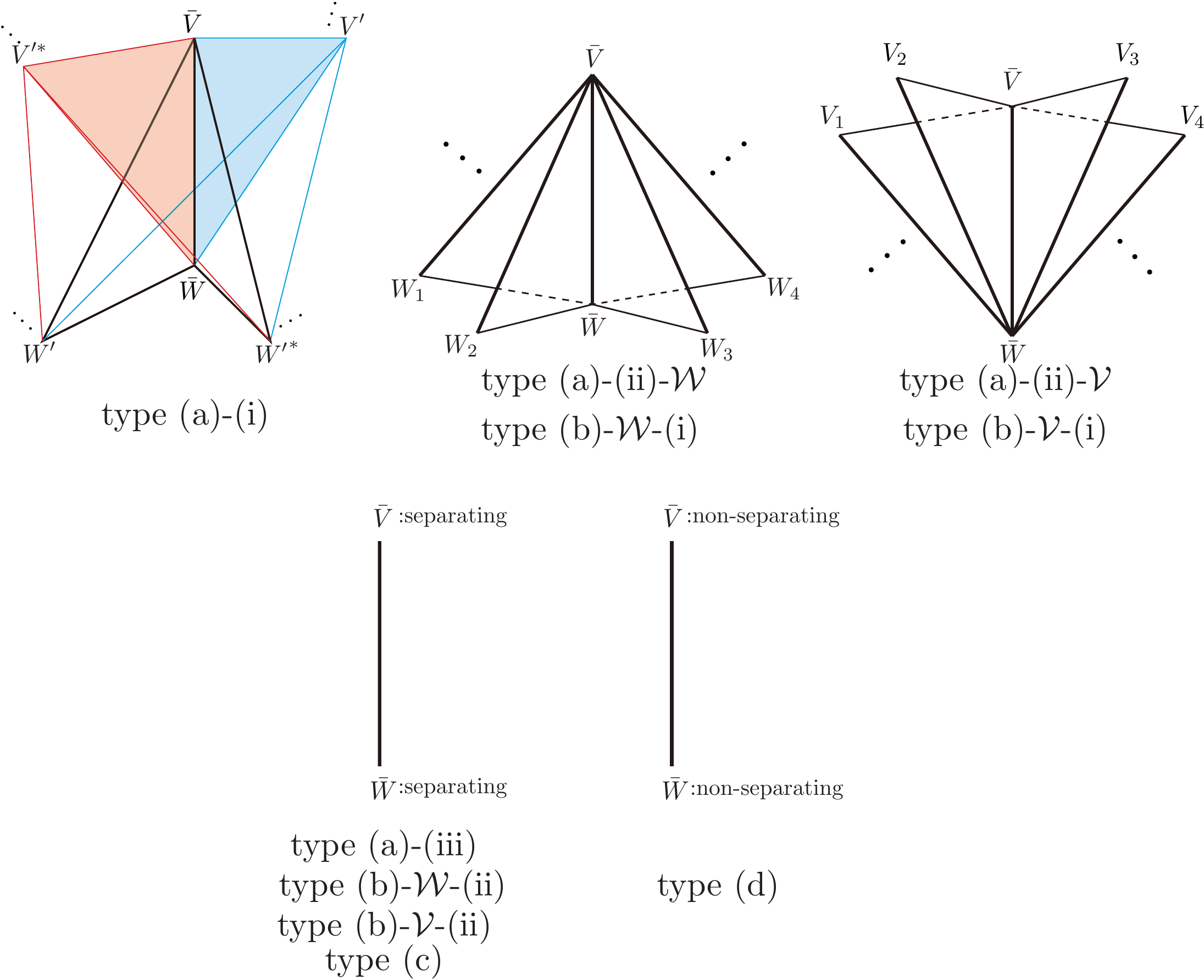}
\caption{the shapes of equivalent clusters\label{fig-bbs}}
\end{figure}

In summary, we get the following two lemmas.

First, we can understand the \textit{type} of an equivalent cluster as the type of GHSs obtained by weak reductions in the cluster and therefore we summarize all cases in Lemma  \ref{lemma-3-9-ii}.
Since any weak reducing pair gives exactly one type of GHS after weak reduction in the sense of Definition \ref{definition-GHSs}, the second statement of Lemma \ref{lemma-3-9-ii} comes from  Lemma \ref{lemma-types} directly.
\begin{lemma}\label{lemma-3-9-ii}
Let $M$ and $F$ be as in Lemma \ref{lemma-five-GHSs} and let $\mathcal{B}$ be an equivalent cluster.
Then the GHSs obtained by weak reductions along the weak reducing pairs in $\mathcal{B}$ are all equivalent.
Moreover, the GHSs are of the same type in the sense of Definition \ref{definition-GHSs}, where the type is uniquely determined.
\end{lemma}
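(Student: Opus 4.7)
The plan is to proceed by case analysis on the type of the equivalent cluster $\mathcal{B}$, using Definitions \ref{definition-3-3-detail}, \ref{definition-3-5-detail}, \ref{definition-3-5-ii-detail}, and \ref{definition-3-6}, which together exhaust the ten possibilities. Essentially all of the real work has been done in the preceding lemmas, so what remains is just to organize the cases and cite the correct ingredient in each.

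First, when $\mathcal{B}$ is of type (a)-(i), Definition \ref{definition-3-3-detail} presents $\mathcal{B}$ as the union of all simplices of $\DVW(F)$ spanned by vertices of $\varepsilon_\V\cup\varepsilon_\W$, where $\varepsilon_\V$ and $\varepsilon_\W$ are facial clusters having one equivalent class sharing a common center. The equivalence of all GHSs obtained from weak reducing pairs in $\mathcal{B}$ is then exactly statement (\ref{lemma-type-a-i-3}) of Lemma \ref{lemma-type-a-i}.

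Second, when $\mathcal{B}$ is of one of the four types (a)-(ii)-$\W$, (b)-$\W$-(i), (a)-(ii)-$\V$, or (b)-$\V$-(i), Definitions \ref{definition-3-5-detail} and \ref{definition-3-5-ii-detail} describe $\mathcal{B}$ as a $\V$- or $\W$-facial cluster having one equivalent class. Lemma \ref{lemma8} then says precisely that every weak reducing pair in such a cluster induces a GHS equivalent to the one obtained from the center, and transitivity of $\sim$ (Lemma \ref{lemma-equivalent-class}) upgrades this to mutual equivalence of all GHSs coming from $\mathcal{B}$. For the remaining types (a)-(iii), (b)-$\W$-(ii), (b)-$\V$-(ii), (c), and (d), Definition \ref{definition-3-6} declares that $\mathcal{B}$ consists of a single weak reducing pair, so the equivalence statement is vacuous.

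For the second assertion, Lemma \ref{lemma-types} guarantees that equivalent GHSs are of the same type in the sense of Definition \ref{definition-GHSs}, and since any GHS obtained by a weak reduction falls into exactly one of the ten types listed there, this common type is uniquely determined by $\mathcal{B}$. The only place where any care is required is in matching the defining data of each variety of equivalent cluster against the correct earlier lemma, but there is no serious obstacle: the substantive content has already been pushed into Lemma \ref{lemma-type-a-i}(\ref{lemma-type-a-i-3}), Lemma \ref{lemma8}, and Lemma \ref{lemma-types}, and the present lemma is essentially a tabulation across the classification of Definition \ref{definition-GHSs}.
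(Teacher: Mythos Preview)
Your proposal is correct and follows essentially the same approach as the paper: the paper does not give a formal proof but presents the lemma as a direct summary of the preceding constructions, noting explicitly that the second statement comes from Lemma~\ref{lemma-types}. Your case analysis across Definitions~\ref{definition-3-3-detail}, \ref{definition-3-5-detail}, \ref{definition-3-5-ii-detail}, and \ref{definition-3-6}, invoking Lemma~\ref{lemma-type-a-i}(\ref{lemma-type-a-i-3}) and Lemma~\ref{lemma8} for the first assertion, is exactly the unpacking the paper leaves implicit.
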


Next, Lemma \ref{lemma-3-9} means every equivalent cluster is contractible and the intersection of an equivalent cluster and $\DV(F)$ or $\DW(F)$ is contractible.

\begin{lemma}\label{lemma-3-9}
Let $M$ and $F$ be as in Lemma \ref{lemma-five-GHSs}.
Then every equivalent cluster is contractible.
Moreover, every equivalent cluster is the union of all simplices of $\D(F)$ spanned by the vertices of the cluster.
When an equivalent cluster intersects $\DV(F)$ or $\DW(F)$, the dimension of the intersection is at most one.
Indeed, the intersection is either a vertex coming from the center if the dimension is zero or a star-shaped graph with infinitely many edges if the dimension is one.
Moreover, if the dimension of the intersection is one, then the star-shaped graph is equal to the intersection of the $\V$- or $\W$-facial cluster having one equivalent class in the equivalent cluster whose center is the center of the equivalent cluster and $\DV(F)$ or $\DW(F)$, respectively, i.e. the vertex positioned at the center of the star-shaped graph comes from the center of the equivalent cluster.
\end{lemma}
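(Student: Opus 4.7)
The plan is to verify the four conclusions case by case through the ten types of equivalent clusters listed in Definitions \ref{definition-3-3-detail}, \ref{definition-3-5-detail}, \ref{definition-3-5-ii-detail} and \ref{definition-3-6}, leveraging the structural descriptions and auxiliary lemmas already established in this section.

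First I would dispatch the five ``single pair'' types (a)-(iii), (b)-$\W$-(ii), (b)-$\V$-(ii), (c) and (d). Definition \ref{definition-3-6} identifies such a cluster with a single weak reducing pair $(V,W)$, i.e.\ a single $1$-simplex $\{V,W\}$. All four conclusions are then immediate: a $1$-simplex is contractible; it equals the union of simplices spanned by its two vertices; and its intersections with $\DV(F)$ and $\DW(F)$ are the single vertices $V=\bar V$ and $W=\bar W$, both coming from the center.

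Next I would treat the four ``facial cluster'' types (a)-(ii)-$\W$, (b)-$\W$-(i), (a)-(ii)-$\V$ and (b)-$\V$-(i). Here the cluster is a $\V$- or $\W$-facial cluster having one equivalent class by Definitions \ref{definition-3-5-detail} and \ref{definition-3-5-ii-detail}. Lemma \ref{lemma-cluster-all-simplices} gives the ``union of simplices'' property, and contractibility was already noted in the remark following Definition \ref{definition-3-5-ii-detail}. For the intersection with the corresponding disk complex---say $\DW(F)$ for a $\W$-facial cluster $\varepsilon_\W$---Definition \ref{definition-2-14} forces every $\W$-face of $\varepsilon_\W$ to have the shape $\{\bar V,\bar W,W'\}$, so the intersection has vertex set $\{\bar W\}\cup\{W'\}$ with edges exactly $\{\bar W,W'\}$; Lemma \ref{lemma-cluster-A} forbids any edge between two separating $W'$'s, and Lemma \ref{lemma-DorE} guarantees infinitely many such $W'$, yielding the required infinite star-shaped graph centered at $\bar W$. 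The intersection with the opposite disk complex reduces to the single vertex $\bar V$. The $\V$-facial cases are symmetric.

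Finally, for type (a)-(i) I would invoke Claim 5 from the proof of Lemma \ref{lemma-type-a-i}, which expresses the cluster as the union of all $3$-simplices $\Sigma_{V'W'}=\{V',\bar V,\bar W,W'\}$ satisfying condition ($\ast$) and sharing the common edge $\{\bar V,\bar W\}$; contractibility is recorded in the paragraph following Definition \ref{definition-3-3-detail}, and the ``union of simplices'' property is Claim 5 itself. For the intersection with $\DV(F)$, any simplex of the cluster contained in $\DV(F)$ is a face of some $\Sigma_{V'W'}$, hence has vertex set inside $\{V',\bar V\}$; sweeping over admissible $V'\in\varepsilon_\V\cap\DV(F)\setminus\{\bar V\}$ recovers exactly $\varepsilon_\V\cap\DV(F)$, a star-shaped graph with center $\bar V$ by the argument of the previous paragraph. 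The argument for $\DW(F)$ is symmetric. The main obstacle will be precisely this type (a)-(i) case, specifically ruling out simplices of dimension $\geq 2$ inside the intersection with $\DV(F)$; this is handled by Lemma \ref{lemma-cluster-A}, since the absence of $1$-simplices between two separating $\V$-disks of $\varepsilon_\V$ other than through $\bar V$ immediately precludes any $2$- or higher-dimensional $\DV(F)$-simplex in the cluster and forces the intersection to be the advertised one-dimensional star-shaped graph.
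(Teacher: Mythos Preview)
Your proposal is correct and follows essentially the same approach as the paper. The paper treats Lemma \ref{lemma-3-9} as a summary statement: contractibility and the ``union of simplices'' property are established beforehand (the lemma after Definition \ref{definition-3-3-detail}, the remark after Definition \ref{definition-3-5-ii-detail}, and Lemma \ref{lemma-cluster-all-simplices}), and the paper's only explicit argument is the note immediately after the lemma, invoking Lemma \ref{lemma-cluster-A} for the dimension and star-shape of the intersections with $\DV(F)$ and $\DW(F)$. Your case-by-case organization through the ten types simply makes this collection of already-proved facts explicit, using the same auxiliary lemmas (Lemma \ref{lemma-cluster-A}, Lemma \ref{lemma-cluster-all-simplices}, Lemma \ref{lemma-DorE}, Claim \ref{claim-5}) in the same roles.
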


Note that Lemma \ref{lemma-cluster-A} induces that (i) the dimension of the intersection of an equivalent cluster and $\DV(F)$ or $\DW(F)$ is at most one and (ii) the shape of the intersection must be a star-shaped graph for the dimension one case in the statement  of Lemma \ref{lemma-3-9}.

Considering Lemma \ref{lemma-type-a-i}, Lemma \ref{lemma-type-a-ii} and Definition \ref{definition-3-6}, any weak reducing pair $(V,W)$ belongs to a naturally determined equivalent cluster, say the \textit{canonical equivalent cluster} for $(V,W)$.

The next lemma gives a criterion to determine whether a disk in an equivalent cluster belongs to the center of the cluster or not.

\begin{lemma}\label{lemma-center}
Let $M$ and $F$ be as in Lemma \ref{lemma-five-GHSs} and $D$ is a disk in an equivalent cluster $\mathcal{B}$.
Then $D$ belongs to the center of $\mathcal{B}$ if and only if one of the following holds.
\begin{enumerate}
\item it is nonseparating or 
\item if it is separating, then it does not cut off a solid torus from the relevant compression body.
\end{enumerate}
\end{lemma}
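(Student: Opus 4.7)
The plan is to proceed by case analysis over the ten types of equivalent clusters introduced in Definition \ref{definition-3-3-detail}, Definition \ref{definition-3-5-detail}, Definition \ref{definition-3-5-ii-detail}, and Definition \ref{definition-3-6}. By Lemma \ref{lemma-3-9-ii}, every $\mathcal{B}$ falls into exactly one of these ten types, and by Lemma \ref{lemma-cluster-all-simplices} together with the explicit construction of each cluster, the set of vertices of $\mathcal{B}$ is completely enumerated by the definitions. Combined with Definition \ref{definition-2-14}, which describes which vertices of a $\V$- or $\W$-facial cluster having one equivalent class belong to the center and which do not, both directions of the iff can be verified by direct inspection in each case.

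For type (a)-(i), I would observe that the vertex set of $\mathcal{B}$ equals that of $\varepsilon_\V\cup\varepsilon_\W$, where $(\bar{V},\bar{W})$ is the common center. By Definition \ref{definition-2-14}, $\bar{V}$ and $\bar{W}$ are the unique nonseparating disks in $\varepsilon_\V$ and $\varepsilon_\W$ respectively, and every other vertex is a separating disk cutting off a solid torus from $\V$ or $\W$. Hence the center disks satisfy (1) while no non-center disk satisfies (1) or (2).

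For an equivalent cluster of type (a)-(ii)-$\W$ or type (b)-$\W$-(i), I would use the fact that $\mathcal{B}=\varepsilon_\W$ with center $(\bar{V},\bar{W})$, where $\bar{V}=V$ and $\bar{W}$ is either $W$ itself (when $W$ is nonseparating) or a meridian disk of a solid torus $\W'$ (which is again nonseparating). By the definition of these two types in Definition \ref{definition-GHSs}, $\bar{V}$ is separating but cuts off a handlebody of genus at least two or a compression body with nonempty negative boundary from $\V$, so $\bar{V}$ satisfies (2) and $\bar{W}$ satisfies (1). Every non-center $\W$-disk in $\varepsilon_\W$ is separating and cuts off a solid torus by Definition \ref{definition-2-14}, so it satisfies neither (1) nor (2). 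A symmetric argument handles type (a)-(ii)-$\V$ and type (b)-$\V$-(i).

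For the remaining types (a)-(iii), (b)-$\W$-(ii), (b)-$\V$-(ii), (c), and (d), the cluster $\mathcal{B}$ is just the weak reducing pair $(V,W)$ which coincides with its own center by Definition \ref{definition-3-6}, so every vertex of $\mathcal{B}$ lies in the center. Thus I only need to verify that in each of these five types, neither $V$ nor $W$ is a separating disk cutting off a solid torus. This follows immediately from the shapes of the compression bodies specified in Definition \ref{definition-GHSs}: in types (a)-(iii), (b)-$\W$-(ii), (b)-$\V$-(ii), and (c) each disk of the pair either is nonseparating, cuts off a handlebody of genus at least two, or cuts off a compression body with nonempty negative boundary, while in type (d) both disks are nonseparating. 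There is no substantive obstacle here beyond careful bookkeeping; the ten-type subdivision of Definition \ref{definition-GHSs} was designed precisely so that the ``center'' distinguishes itself from the remaining disks by this very property.
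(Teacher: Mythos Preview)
Your proposal is correct and follows essentially the same approach as the paper's own proof: both proceed by case analysis over the ten types of equivalent clusters, using Definition \ref{definition-2-14} and Definition \ref{definition-GHSs} to verify in each type that the center disks are exactly those which are nonseparating or, if separating, do not cut off a solid torus. The paper's proof is merely a terser bullet-point version of the same inspection you spell out.
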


\begin{proof}
Let $(\bar{V},\bar{W})$ be the center of $\mathcal{B}$.
Then the following hold:
\begin{enumerate}
\item \textbf{Type (a)-(i)}: $\bar{V}$ and $\bar{W}$ are nonseparating, and another disk cuts off a solid torus from the relevant compression body (see Definition \ref{definition-3-3-detail} and Definition  \ref{definition-2-14}).
\item \textbf{Type (a)-(ii)-$\W$ and Type (b)-$\W$-(i)}: $\bar{W}$ is nonseparating and another $\W$-disk cuts off a solid torus from $\W$ (see Definition \ref{definition-3-5-detail} and Definition  \ref{definition-2-14}).
$\bar{V}$ is separating in $\V$ and it does not cut off a solid torus from $\V$  (see Definition \ref{definition-GHSs}).
\item \textbf{Type (a)-(ii)-$\V$ and Type (b)-$\W$-(i)}: Use the symmetric argument of the previous case (see Definition \ref{definition-3-5-ii-detail}, Definition  \ref{definition-2-14} and Definition \ref{definition-GHSs}).
\item \textbf{Type (a)-(iii), Type (b)-$\W$-(ii), Type (b)-$\V$-(ii) and Type (c)}:
Each of $\bar{V}$ and $\bar{W}$ is separating, but it does not cut off a solid torus from the relevant compression body (see Lemma \ref{lemma-five-GHSs},  Definition \ref{definition-GHSs} and Definition \ref{definition-3-6}).
\item \textbf{Type (d)}: $\bar{V}$ and $\bar{W}$ are nonseparating (see Lemma \ref{lemma-five-GHSs} and Definition \ref{definition-3-6}).
\end{enumerate}
This completes the proof.
\end{proof}

\begin{lemma}\label{lemma-6x}
Let $M$ and $F$ be as in Lemma \ref{lemma-five-GHSs}.
Let $\mathcal{B}_i$ be an equivalent cluster and $(\bar{V}_i,\bar{W}_i)$  the center of $\mathcal{B}_i$ for $i=1,2$.
If $(\mathcal{B}_1\cap\mathcal{B}_2)\cap \DV(F)\neq\emptyset$, then $\bar{V}_1=\bar{V}_2$. 
Likewise, if $(\mathcal{B}_1\cap\mathcal{B}_2)\cap \DW(F)\neq\emptyset$, then $\bar{W}_1=\bar{W}_2$.
\end{lemma}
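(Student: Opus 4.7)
The plan is to treat Lemma \ref{lemma-center} as the crux: whether a $\V$-disk $V'$ lying in an equivalent cluster $\mathcal{B}$ is the $\V$-component of the center of $\mathcal{B}$ is detected by a purely intrinsic condition on $V'$ (namely, being nonseparating, or being separating but not cutting off a solid torus from $\V$), and this condition does not refer to $\mathcal{B}$ at all. Fixing a vertex $V\in(\mathcal{B}_1\cap\mathcal{B}_2)\cap\DV(F)$, I will split into the two possibilities afforded by Lemma \ref{lemma-center}.

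The first case is when $V$ is either nonseparating in $\V$, or separating but does not cut off a solid torus from $\V$. Applying Lemma \ref{lemma-center} inside $\mathcal{B}_1$ gives $V=\bar{V}_1$, and applying it inside $\mathcal{B}_2$ gives $V=\bar{V}_2$, so $\bar{V}_1=\bar{V}_2$ immediately.

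The second case is when $V$ is separating and cuts off a solid torus $\V'$ from $\V$. Then Lemma \ref{lemma-center} tells us $V\neq\bar{V}_i$ for $i=1,2$, so $V$ is a non-center $\V$-vertex of each $\mathcal{B}_i$. Using the structural description of $\mathcal{B}_i\cap\DV(F)$ in Lemma \ref{lemma-3-9} (it is either the singleton $\{\bar{V}_i\}$, or a star-shaped graph which coincides with the intersection of $\DV(F)$ with the $\V$-facial cluster having one equivalent class in $\mathcal{B}_i$ whose center is the center of $\mathcal{B}_i$), the presence of $V$ forces the latter alternative, and then Definition \ref{definition-2-14} forces $\bar{V}_i$ to be a meridian disk of the solid torus that $V$ cuts off, namely $\V'$, for both $i=1,2$. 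Since a meridian disk of a solid torus is unique up to isotopy inside the solid torus, Corollary \ref{corollary-isotopic-bd} then yields $\bar{V}_1=\bar{V}_2$ in $\D(F)$. The symmetric argument handles the $\DW(F)$ statement, and the only mild obstacle is the type-by-type inspection in the second case, which is already packaged for us by Lemma \ref{lemma-3-9} and Definition \ref{definition-2-14}.
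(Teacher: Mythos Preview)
Your argument is correct and mirrors the paper's proof exactly: both split on whether the shared vertex $V$ satisfies the intrinsic center-criterion of Lemma \ref{lemma-center}, and in the non-center case both invoke Lemma \ref{lemma-3-9} and Definition \ref{definition-2-14} to identify each $\bar{V}_i$ as a meridian of the solid torus that $V$ cuts off. One small correction: your appeal to Corollary \ref{corollary-isotopic-bd} is misplaced, since that corollary requires $\partial_+\V'=\partial_+\V$, which fails for the solid torus $\V'$; instead, the uniqueness of the meridian disk gives $\partial\bar{V}_1$ isotopic to $\partial\bar{V}_2$ in the once-punctured torus $\partial\V'\cap F\subset F$, and then Lemma \ref{lemma-isotopic-bd} yields $\bar{V}_1=\bar{V}_2$ in $\V$ (the paper simply asserts ``they are isotopic in $\V$'' at this step).
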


\begin{proof}
Let $V$ be a vertex of $(\mathcal{B}_1\cap\mathcal{B}_2)\cap \DV(F)$.

Assume $V=\bar{V}_1$.
Then (1) $V$ is nonseparating or (2) it does not cut off a solid torus from $\V$ if  it is separating  as a disk in $\mathcal{B}_1$ by Lemma \ref{lemma-center}.
Since $V$ also belongs to $\mathcal{B}_2$, $V=\bar{V}_2$  by Lemma \ref{lemma-center}, i.e. $\bar{V}_1=\bar{V}_2$.

Assume $V\neq\bar{V}_1$.
Then $V$ cuts off a solid torus $\V'$ from $\V$ as a disk in $\mathcal{B}_1$ by Lemma \ref{lemma-center} and therefore $V\neq\bar{V}_2$  as a disk in $\mathcal{B}_2$ by Lemma \ref{lemma-center}.
Here, each $\mathcal{B}_i$ contains a $\V$-facial cluster having one equivalent class whose center is $(\bar{V}_i,\bar{W}_i)$, say $\varepsilon_\V^i$,  and $V$ is a vertex of the star-shaped graph $\varepsilon_\V^i\cap \DV(F)$ which is not the center of the graph  by Lemma \ref{lemma-3-9} for $i=1,2$.
This means $\bar{V}_1$ and $\bar{V}_2$ are meridian disks of the solid torus $\V'$ by Definition \ref{definition-2-14}, i.e. they are isotopic in $\V$. 
Therefore, we get $\bar{V}_1=\bar{V}_2$.

This completes the proof.
\end{proof}

The next lemma means the canonical equivalent cluster for $(V,W)$ is the only equivalent cluster containing $(V,W)$.

\begin{lemma}\label{lemma-BB-char}
Let $M$ and $F$ be as in Lemma \ref{lemma-five-GHSs}.
Then every weak reducing pair belongs to a uniquely determined equivalent cluster.
This means if two different equivalent clusters intersect each other, then the intersection cannot contain a weak reducing pair.
\end{lemma}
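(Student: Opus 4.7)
The plan is to separate the statement into existence and uniqueness. Existence is essentially free: by Lemma \ref{lemma-type-a-i}, Lemma \ref{lemma-type-a-ii}, and Definition \ref{definition-3-6}, any weak reducing pair $(V,W)$ lies in the canonical equivalent cluster determined by the type of the GHS it induces after weak reduction (recall from Definition \ref{definition-GHSs} that each $(V,W)$ gives exactly one of the ten types). So the substantive content is uniqueness.

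For uniqueness, I would suppose $(V,W)$ belongs to two equivalent clusters $\mathcal{B}_1$ and $\mathcal{B}_2$ with centers $(\bar{V}_1,\bar{W}_1)$ and $(\bar{V}_2,\bar{W}_2)$ respectively. Since $V \in (\mathcal{B}_1\cap\mathcal{B}_2)\cap\DV(F)$ and $W\in(\mathcal{B}_1\cap\mathcal{B}_2)\cap\DW(F)$, Lemma \ref{lemma-6x} gives $\bar{V}_1=\bar{V}_2$ and $\bar{W}_1=\bar{W}_2$; write this common center as $(\bar{V},\bar{W})$. Lemma \ref{lemma-3-9-ii} then forces $\mathcal{B}_1$ and $\mathcal{B}_2$ to be of the same type in the sense of Definition \ref{definition-GHSs}, namely the type of the GHS obtained by weak reduction along the shared center $(\bar{V},\bar{W})$.

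The remainder is a case split by type. When the type is (a)-(iii), (b)-$\W$-(ii), (b)-$\V$-(ii), (c), or (d), Definition \ref{definition-3-6} says each of $\mathcal{B}_1$ and $\mathcal{B}_2$ coincides with its center, so $\mathcal{B}_1=\{(\bar{V},\bar{W})\}=\mathcal{B}_2$. When the type is (a)-(ii)-$\W$ or (b)-$\W$-(i) (respectively (a)-(ii)-$\V$ or (b)-$\V$-(i)), Definition \ref{definition-3-5-detail} (resp.\ Definition \ref{definition-3-5-ii-detail}) tells us $\mathcal{B}_i$ is a $\W$-facial (resp.\ $\V$-facial) cluster having one equivalent class with center $(\bar{V},\bar{W})$, and Corollary \ref{corollary-DorE} guarantees that such a cluster is uniquely determined by its center, so $\mathcal{B}_1=\mathcal{B}_2$. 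Finally, when the type is (a)-(i), Definition \ref{definition-3-3-detail} expresses $\mathcal{B}_i$ as the union of all simplices of $\DVW(F)$ spanned by the vertices of $\varepsilon_\V^i\cup\varepsilon_\W^i$, where $\varepsilon_\V^i$ and $\varepsilon_\W^i$ are $\V$- and $\W$-facial clusters having one equivalent class whose common center is $(\bar{V},\bar{W})$; by two applications of Corollary \ref{corollary-DorE} we get $\varepsilon_\V^1=\varepsilon_\V^2$ and $\varepsilon_\W^1=\varepsilon_\W^2$, and hence $\mathcal{B}_1=\mathcal{B}_2$.

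I do not expect a real obstacle: the heavy lifting has already been done in Lemma \ref{lemma-6x} (which pins down the shared center from a single shared disk on each side) and Corollary \ref{corollary-DorE} (which pins down a facial cluster having one equivalent class from its center). The only care needed is to enumerate the ten types of Definition \ref{definition-GHSs} and apply the appropriate one of Definition \ref{definition-3-3-detail}, Definition \ref{definition-3-5-detail}, Definition \ref{definition-3-5-ii-detail}, or Definition \ref{definition-3-6}. The second sentence of the lemma (if two distinct equivalent clusters intersect, the intersection contains no weak reducing pair) is the immediate contrapositive of uniqueness.
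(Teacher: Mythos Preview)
Your proposal is correct and follows essentially the same approach as the paper: both use Lemma \ref{lemma-6x} on the shared $\V$- and $\W$-disks to force a common center, use Lemma \ref{lemma-3-9-ii} to force a common type, and then invoke Corollary \ref{corollary-DorE} (once or twice according to type) to conclude $\mathcal{B}_1=\mathcal{B}_2$. The paper organizes the final case split via the dimension of $\mathcal{B}_i\cap\DV(F)$ and $\mathcal{B}_i\cap\DW(F)$ (appealing to Lemma \ref{lemma-3-9}) rather than directly by the ten types, but this is a cosmetic difference.
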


\begin{proof}
By definitions of equivalent clusters, every weak reducing pair belongs to some equivalent cluster.
Let us consider a weak reducing pair $(V,W)$.
Suppose there are two equivalent clusters containing $(V,W)$, say $\mathcal{B}_1$ and $\mathcal{B}_2$.
Let $(\bar{V}_i,\bar{W}_i)$ be the center of $\mathcal{B}_i$ for $i=1,2$.
We will prove $\mathcal{B}_1=\mathcal{B}_2$.

\begin{claim}
$\mathcal{B}_1$ and $\mathcal{B}_2$ are of the same type and they share the common center.\label{claim-6}
\end{claim}

\begin{proofN}{Claim \ref{claim-6}}
Since $\mathcal{B}_1$ and $\mathcal{B}_2$ share $(V,W)$, the GHSs obtained by weak reductions along the weak reducing pairs in $\mathcal{B}_1\cup\mathcal{B}_2$ are all equivalent to that corresponding to $(V,W)$ and they are of the same type in the sense of Definition \ref{definition-GHSs}  by Lemma \ref{lemma-3-9-ii}.  
This means $\mathcal{B}_1$ and $\mathcal{B}_2$ are of the same type.

Since $\mathcal{B}_1$ and $\mathcal{B}_2$ share $(V,W)$, $(\mathcal{B}_1\cap\mathcal{B}_2)\cap \DV(F)\neq\emptyset$ and $(\mathcal{B}_1\cap\mathcal{B}_2)\cap \DW(F)\neq\emptyset$.
Therefore, Lemma \ref{lemma-6x} means $\bar{V}_1=\bar{V}_2$ and $\bar{W}_1=\bar{W}_2$, i.e. $(\bar{V}_1,\bar{W}_1)=(\bar{V}_2,\bar{W}_2)$.

This completes the proof of Claim \ref{claim-6}.
\end{proofN}

By Claim \ref{claim-6}, $\mathcal{B}_1$ and $\mathcal{B}_2$ are of the same type (so the shapes of them are  the same in the sense of Figure \ref{fig-bbs}) and they share the common center.
Hence, if each of $\mathcal{B}_1$ and $\mathcal{B}_2$ consists of a weak reducing pair, then we conclude $\mathcal{B}_1=\mathcal{B}_2$, leading to the result.
Hence, assume each of $\mathcal{B}_1$ and $\mathcal{B}_2$ does not consist of a weak reducing pair.
Considering $\mathcal{B}_i\cap\DV(F)$ and $\mathcal{B}_i\cap\DW(F)$, if both are of dimension zero, then each of them consists of a vertex by Lemma \ref{lemma-3-9}, i.e. $\mathcal{B}_i$ is weak reducing pair, a contradiction.
Hence, at least one of $\mathcal{B}_i\cap\DV(F)$ and $\mathcal{B}_i\cap\DW(F)$ is of dimension one by Lemma \ref{lemma-3-9} for $i=1,2$.

If $\dim(\mathcal{B}_i\cap\DV(F))=1$ for $i=1,2$, then  each of $\mathcal{B}_1$ and $\mathcal{B}_2$ contains a $\V$-facial cluster having one equivalent class whose center is $(\bar{V}_i,\bar{W}_i)$, say $\varepsilon_\V^i$,  by Lemma \ref{lemma-3-9} for $i=1,2$.
Here, Claim \ref{claim-6} means the center of $\varepsilon_\V^1$ is equal to that of $\varepsilon_\V^2$ and therefore we get $\varepsilon_\V^1=\varepsilon_\V^2$ by Corollary \ref{corollary-DorE}.
Hence, say $\varepsilon_\V:=\varepsilon_\V^1=\varepsilon_\V^2$ in this case.

Likewise, if $\dim(\mathcal{B}_i\cap\DW(F))=1$ for $i=1,2$, then  each of $\mathcal{B}_1$ and $\mathcal{B}_2$ contains a $\W$-facial cluster having one equivalent class whose center is $(\bar{V}_i,\bar{W}_i)$, say $\varepsilon_\W^i$, and therefore we get $\varepsilon_\W^1=\varepsilon_\W^2$ likewise. 
Hence, say $\varepsilon_\W:=\varepsilon_\W^1=\varepsilon_\W^2$ in this case.

Considering the uniqueness of the $\V$- or $\W$-facial cluster having one equivalent class whose center is $(\bar{V}_i, \bar{W}_i)$ by Corollary \ref{corollary-DorE}, we can see either
\begin{enumerate}
\item $\varepsilon_\V\cup \varepsilon_\W$ determines $\mathcal{B}_i$ as in Definition \ref{definition-3-3-detail} ($\mathcal{B}_i$ is of type (a)-(i)) for i=1,2,
\item $\varepsilon_\W$ is itself $\mathcal{B}_i$ as in Definition \ref{definition-3-5-detail} ($\mathcal{B}_i$ is of type (a)-(ii)-$\W$ or type (b)-$\W$-(i)) for i=1,2, or
\item $\varepsilon_\V$ is itself $\mathcal{B}_i$ as in Definition \ref{definition-3-5-ii-detail} ($\mathcal{B}_i$ is of type (a)-(ii)-$\V$ or type (b)-$\V$-(i)) for i=1,2.
\end{enumerate}
Hence, we conclude $\mathcal{B}_1=\mathcal{B}_2$.

This completes the proof.
\end{proof}

\begin{definition}\label{definition-Phi}
Considering Lemma \ref{lemma-3-9-ii} and Lemma \ref{lemma-BB-char}, there is the \textit{canonical function $\Phi$} from the set of equivalent clusters to the set of equivalence classes $\mathcal{GHS}_F^\ast/\sim$, where $\Phi(\mathcal{B})$ is the equivalent class of the GHSs obtained by weak reductions along the weak reducing pairs in $\mathcal{B}$ for a given equivalent cluster $\mathcal{B}$.
For a representative $\mathbf{H}\in\mathcal{GHS}_F^\ast$ of an equivalent class $[\mathbf{H}]^\ast$, there exists an weak reducing pair giving $\mathbf{H}$  after weak reduction.
Moreover, this weak reducing pair belongs to an equivalent cluster $\mathcal{B}'$ by Lemma \ref{lemma-BB-char}, i.e. $\Phi(\mathcal{B}')=[\mathbf{H}]^\ast$, and therefore $\Phi$ is surjective.
\end{definition}

As well as a weak reducing pair, a $\V$- or $\W$-facial cluster having one equivalent class also belongs to a uniquely determined equivalent cluster by Lemma \ref{lemma-BB-some}.

\begin{lemma}\label{lemma-BB-some}
Let $M$ and $F$ be as in Lemma \ref{lemma-five-GHSs}.
If there is a $\V$- or $\W$-facial cluster having one equivalent class, then it belongs to a uniquely determined equivalent cluster.
Therefore, every $\V$- or $\W$-face having one equivalent class belongs to a uniquely determined equivalent cluster.
\end{lemma}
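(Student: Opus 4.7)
My plan is to reduce the problem to the uniqueness statement of Lemma \ref{lemma-BB-char} (every weak reducing pair lies in a unique equivalent cluster) by passing through the center of the given facial cluster. Assume we are given a $\V$-facial cluster $\varepsilon_\V$ having one equivalent class; the $\W$-case is symmetric. By Lemma \ref{lemma8} and Definition \ref{definition-2-14}, $\varepsilon_\V$ has a unique center $(\bar{V},\bar{W})$ with $\bar{V}$ nonseparating in $\V$, and every other $\V$-disk in $\varepsilon_\V$ is separating and cuts off a solid torus from $\V$ of which $\bar{V}$ is a meridian disk. The center is itself a weak reducing pair, so by Lemma \ref{lemma-BB-char} it belongs to a uniquely determined equivalent cluster $\mathcal{B}$. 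The goal is to prove $\varepsilon_\V\subset\mathcal{B}$.

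First I would pin down the possible types of $\mathcal{B}$ in the sense of Definition \ref{definition-GHSs}. By Lemma \ref{lemma-types}, every weak reducing pair in $\varepsilon_\V$ produces a GHS of the same type. Because at the center $\bar{V}$ is nonseparating while at every other pair the $\V$-disk is separating and cuts off a solid torus, a direct inspection of Definition \ref{definition-GHSs} eliminates all types except (a)-(i), (a)-(ii)-$\V$, and (b)-$\V$-(i); in particular type (d) is ruled out since the non-center pairs have separating $\V$-disks, and any type whose $V$-disk must be separating-without-solid-torus is incompatible with the center.

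In each of these three admissible cases I would invoke Corollary \ref{corollary-DorE}, which says that for the weak reducing pair $(\bar{V},\bar{W})$ there is at most one $\V$-facial cluster having one equivalent class whose center is $(\bar{V},\bar{W})$, so that any such cluster equals $\varepsilon_\V$. When $\mathcal{B}$ is of type (a)-(i), Definition \ref{definition-3-3-detail} combined with Lemma \ref{lemma-type-a-i} expresses $\mathcal{B}$ as the span of $\varepsilon'_\V\cup\varepsilon'_\W$ for a $\V$-facial cluster $\varepsilon'_\V$ and a $\W$-facial cluster $\varepsilon'_\W$ sharing center $(\bar{V},\bar{W})$; by Corollary \ref{corollary-DorE}, $\varepsilon'_\V=\varepsilon_\V$, hence $\varepsilon_\V\subset\mathcal{B}$. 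When $\mathcal{B}$ is of type (a)-(ii)-$\V$ or (b)-$\V$-(i), Definition \ref{definition-3-5-ii-detail} makes $\mathcal{B}$ itself a $\V$-facial cluster having one equivalent class with center $(\bar{V},\bar{W})$, which by the same corollary must be $\varepsilon_\V$, so $\mathcal{B}=\varepsilon_\V$.

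Uniqueness is then immediate: any equivalent cluster containing $\varepsilon_\V$ contains the weak reducing pair $(\bar{V},\bar{W})$ and therefore coincides with $\mathcal{B}$ by Lemma \ref{lemma-BB-char}. The second assertion of the lemma follows by combining Lemma \ref{lemma-DorE}, which puts each $\V$- or $\W$-face having one equivalent class into a unique $\V$- or $\W$-facial cluster having one equivalent class, with the first assertion just proved. I expect the main obstacle to be the case analysis in the second step: one must check carefully, against Definition \ref{definition-GHSs}, that the coexistence of a nonseparating $\V$-disk (at the center) and separating-solid-torus $\V$-disks (at the other pairs) really forces the type of $\mathcal{B}$ to lie in the three listed classes, since the other types of Definition \ref{definition-GHSs} could otherwise a priori produce a competing equivalent cluster into which $\varepsilon_\V$ might be routed.
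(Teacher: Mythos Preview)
Your overall strategy matches the paper's, but there is a genuine gap in the type (a)-(i) case. You assert that when $\mathcal{B}$ is of type (a)-(i), the $\V$-facial cluster $\varepsilon'_\V$ and the $\W$-facial cluster $\varepsilon'_\W$ forming $\mathcal{B}$ share the center $(\bar{V},\bar{W})$ of $\varepsilon_\V$. That is not always true: the center of a type (a)-(i) equivalent cluster consists of two \emph{nonseparating} disks (Definition \ref{definition-3-3-detail} and Definition \ref{definition-2-14}), whereas your $\bar{W}$ is merely the $\W$-disk of the center of $\varepsilon_\V$ and may well be separating. In the type (a)-(i) setting, Definition \ref{definition-GHSs} allows $\bar{W}$ to be separating provided it cuts off a solid torus $\W'$ from $\W$; in that sub-case the center of $\mathcal{B}$ is $(\bar{V},\tilde{W})$ with $\tilde{W}$ a meridian disk of $\W'$, so $\tilde{W}\neq\bar{W}$. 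Consequently $\varepsilon'_\V$ has center $(\bar{V},\tilde{W})$, not $(\bar{V},\bar{W})$, and Corollary \ref{corollary-DorE} cannot be invoked to conclude $\varepsilon'_\V=\varepsilon_\V$.

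The paper isolates exactly this sub-case (``$W$ cuts off a solid torus $\W'$ from $\W$'') and argues directly: the meridian disk $\tilde{W}$ of $\W'$ misses every vertex of $\varepsilon_\V$ (by Lemma \ref{lemma-2-8} the $\V$-disks of $\varepsilon_\V$ avoid the once-punctured torus bounded by $\partial\bar{W}$), so each $\V$-face $\{V',\bar{V},\bar{W}\}$ of $\varepsilon_\V$ sits inside a $3$-simplex $\{V',\bar{V},\tilde{W},\bar{W}\}$ satisfying the condition~($\ast$) of the proof of Lemma \ref{lemma-type-a-i}; Claim \ref{claim-5} then places all such $3$-simplices, and hence $\varepsilon_\V$, inside $\mathcal{B}$. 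Your argument is fine when $\bar{W}$ is nonseparating and in the (a)-(ii)-$\V$, (b)-$\V$-(i) cases, but you need this extra step to close the remaining sub-case.
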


\begin{proof}
Without loss of generality, assume $\varepsilon$ is a $\V$-facial cluster having one equivalent class.
Let $(V,W)$ be the center of $\varepsilon$.
Then $V$ is nonseparating in $\V$ by Definition \ref{definition-2-14}.

\begin{claim}\label{claim-7}
$(V,W)$ itself is not an equivalent cluster.
\end{claim}

\begin{proofN}{Claim \ref{claim-7}}
For the sake of contradiction, suppose $(V,W)$ is an equivalent cluster.
Let us consider the cases where a weak reducing pair itself is an equivalent cluster.
In the cases of type (a)-(iii), type (b)-$\W$-(ii), type (b)-$\V$-(ii) and type (c), both disks are separating by Definition \ref{definition-GHSs}.
Therefore, considering that $V$ is nonseparating in $\V$,  $(V,W)$ is of type (d), i.e. both $V$ and $W$ are nonseparating in $\V$ and $\W$, respectively, and $\partial V\cup\partial W$ is separating in $F$.

Let $D$ be another disk of $\varepsilon$ other than $V$ and $W$.
Then $D$ cuts off solid torus $\V'$ from $\V$ and $V$ is a meridian disk of $\V'$  by Definition \ref{definition-2-14}, i.e. $\partial V$ is nonseparating in the relevant component of $F_D$ by Lemma \ref{lemma-pre}.
Moreover, $\partial W$ misses $\V'$ by Lemma \ref{lemma-2-8} and $\partial W$ is nonseparating in the relevant component of $F_D$ by Lemma \ref{lemma-pre}.
Let us compress $F_D$ along $V$ and $W$.
Then the resulting one $(F_D)_{VW}$ consists of two components, i.e. $F_{VW}$ consists of only one component by recovering $F_{VW}$ from the two components of $(F_D)_{VW}$.
This means $\partial V\cup\partial W$ is nonseparating in $F$, violating the assumption.

This completes the proof of Claim \ref{claim-7}.
\end{proofN}

By Claim \ref{claim-7}, $(V,W)$  itself cannot be an equivalent cluster, i.e. the GHS obtained by weak reduction along $(V,W)$ is either of type (a)-(i), type (a)-(ii)-$\W$, type (a)-(ii)-$\V$, type (b)-$\W$-(i) or type (b)-$\V$-(i).
Moreover, we can exclude the cases of type (a)-(ii)-$\W$ and type (b)-$\W$-(i) because $V$ is nonseparating in $\V$.
Hence, considering the proofs of Lemma  \ref{lemma-type-a-i} or Lemma  \ref{lemma-type-a-ii}, there is a weak reducing pair $(\bar{V},\bar{W})$ uniquely determined by $(V,W)$ such that it becomes the center of  an equivalent cluster $\mathcal{B}$ of type (a)-(i) (if $W$ is nonseparating or cuts off a solid torus from $\W$), type (a)-(ii)-$\V$ (if $W$ cuts off a handlebody of genus at least two missing $\partial V$ from $\W$) or type (b)-$\V$-(i) (if $W$ cuts off a compression body with nonempty negative boundary missing $\partial V$ from $\W$) and $\mathcal{B}$ contains $(V,W)$.

Since there cannot be two different equivalent clusters sharing $(V,W)$ by Lemma \ref{lemma-BB-char}, if there is an equivalent cluster containing $\varepsilon$, then it must be the only one.
Therefore, we only need to prove $\varepsilon$ belongs to $\mathcal{B}$.

Since $V$ is nonseparating in $\V$ and $V\in\mathcal{B}$, $V$ belongs the center of $\mathcal{B}$ by Lemma \ref{lemma-center}.

Suppose $W$ does not cut off a solid torus from $\W$, i.e. $(V,W)$ is the center of $\mathcal{B}$ as well as that of $\varepsilon$ by Lemma \ref{lemma-center}.
In this case, the $\V$-facial cluster $\varepsilon_\V$ forming $\mathcal{B}$ in Definition \ref{definition-3-3-detail} or Definition \ref{definition-3-5-ii-detail} must be equal to $\varepsilon$ by Corollary \ref{corollary-DorE}, i.e. $\varepsilon\subset \mathcal{B}$, leading to the result.

Suppose $W$ cuts off a solid torus $\W'$ from $\W$, i.e. the case where $\mathcal{B}$ is of a type (a)-(i).
Let $\tilde{W}$ be a meridian disk of $\W'$ missing $W$. 
Then $V=\bar{V}$ and $\tilde{W}=\bar{W}$ by the proof of  Lemma  \ref{lemma-type-a-i} and $\tilde{W}$ misses all vertices of $\varepsilon$ because every $\V$-disks of $\varepsilon$ cannot intersect the once-punctured torus component of $F-\partial W$  by Lemma \ref{lemma-2-8}.
This means each $\V$-face $\{V', V, W\}$ of $\varepsilon$  belongs to the $3$-simplex of the form $\Sigma_{V'W'}=\{V',\bar{V}=V,\bar{W}=\tilde{W},W'=W\}$, where $V'$ cuts off a solid torus from $\V$ and $V$ is a meridian disk of the solid torus by Definition \ref{definition-2-14}.
Since $\mathcal{B}$ contains all such $3$-simplices by Claim \ref{claim-5}, we conclude $\varepsilon$ belongs to $\mathcal{B}$, leading to the result.

Since  a $\V$-face having one equivalent class belongs to a uniquely determined  $\V$-facial cluster having one equivalent class by Lemma \ref{lemma-DorE}, the second statement comes from the first statement  directly.

This completes the proof.
\end{proof}

\begin{lemma}\label{lemma-equiv-BB}
Let $M$ and $F$ be as in Lemma \ref{lemma-five-GHSs}.
For a $3$-simplex $\Sigma$ of $\D(F)$ containing at least one weak reducing pair, if the GHSs obtained by weak reductions along the weak reducing pairs in $\Sigma$ are all equivalent, then $\Sigma$ belongs to a uniquely determined equivalent cluster of type (a)-(i).
\end{lemma}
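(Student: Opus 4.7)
The plan is to extract the combinatorial structure of $\Sigma$ from Lemma \ref{lemma-restrict}, identify a weak reducing pair in $\Sigma$ that will serve as the center, and then invoke Lemma \ref{lemma-type-a-i} together with Claim \ref{claim-5} in its proof to locate $\Sigma$ inside a type (a)-(i) equivalent cluster, with uniqueness coming from Lemma \ref{lemma-BB-char}.

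First, the second statement of Lemma \ref{lemma-restrict} gives $\Sigma = \{V_1, V_2, W_1, W_2\}$ where $V_2 \subset \V$ and $W_2 \subset \W$ cut off solid tori $\V'$ and $\W'$ from $\V$ and $\W$, respectively, and $V_1$, $W_1$ are meridian disks of $\V'$ and $\W'$, respectively. In particular both $V_1$ and $W_1$ are nonseparating. I would then verify that the GHS produced by weak reduction along $(V_1, W_1)$ is of type (a)-(i) rather than type (d). By Lemma \ref{lemma-2-8} the once-punctured tori $T_{V_2}, T_{W_2} \subset F$ bounded by $\partial V_2$ and $\partial W_2$ are disjoint and contain $\partial V_1$ and $\partial W_1$, respectively. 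Compressing $F$ along each of $V_1$ and $W_1$ replaces the corresponding once-punctured torus by a disk, so $F_{V_1 W_1}$ is connected of genus $g(F) - 2 \geq 1$; hence $\partial V_1 \cup \partial W_1$ is nonseparating in $F$, and the relevant GHS is of type (a)-(i) by Definition \ref{definition-GHSs}.

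At this point I would apply Lemma \ref{lemma-type-a-i} to $(V_1, W_1)$: its construction picks $(\bar V, \bar W) = (V_1, W_1)$ as the center (since both disks are already nonseparating) and produces a type (a)-(i) equivalent cluster $\mathcal{B}$ with this center. Claim \ref{claim-5} in the proof of Lemma \ref{lemma-type-a-i} characterizes $\mathcal{B}$ as the union of all $3$-simplices $\Sigma_{V'W'} = \{V', \bar V, \bar W, W'\}$ satisfying the condition $(\ast)$ there, namely that $V'$ cuts off a solid torus from $\V$ with meridian $\bar V$, and similarly for $W'$. The given $\Sigma = \{V_2, V_1, W_1, W_2\}$ is precisely such a simplex with $V' = V_2$ and $W' = W_2$, so $\Sigma \subset \mathcal{B}$. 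Uniqueness is immediate: any equivalent cluster containing $\Sigma$ contains the weak reducing pair $(V_1, W_1)$, and Lemma \ref{lemma-BB-char} then forces it to equal $\mathcal{B}$.

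The only substantive step is the verification that $\partial V_1 \cup \partial W_1$ is nonseparating in $F$, which rules out type (d); once the disjointness of the two once-punctured tori is noted, this is essentially immediate, and the rest of the argument is a short chase through Lemma \ref{lemma-restrict}, Lemma \ref{lemma-type-a-i} (specifically Claim \ref{claim-5} in its proof), and Lemma \ref{lemma-BB-char}. I do not anticipate a serious obstacle.
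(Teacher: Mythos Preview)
Your proposal is correct and follows essentially the same route as the paper's proof: both extract the structure of $\Sigma$ from Lemma \ref{lemma-restrict}, build the type (a)-(i) cluster around the center $(V_1,W_1)$, invoke Claim \ref{claim-5} to see that $\Sigma$ is one of the $\Sigma_{V'W'}$ simplices in that cluster, and deduce uniqueness from Lemma \ref{lemma-BB-char}. The only organizational difference is that the paper constructs the cluster directly via Lemma \ref{lemma-DorE} and Definition \ref{definition-3-3-detail} (so the ``type (a)-(i)'' label comes for free), whereas you first check explicitly that $(V_1,W_1)$ gives a type (a)-(i) GHS and then appeal to Lemma \ref{lemma-type-a-i}; your extra verification that $\partial V_1\cup\partial W_1$ is nonseparating (via the disjointness of the two once-punctured tori, cf.\ the remark in Definition \ref{definition-GHSs}) is correct and makes that step self-contained.
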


\begin{proof}
Considering Lemma \ref{lemma-restrict}, $\Sigma$ has the form $\{V',V,W,W'\}$, where  $V'$ and $W'$ cut off solid tori $\V'$ and $\W'$ from $\V$ and $\W$, respectively, and  $V$ and $W$ are meridian disks of $\V'$ and $\W'$, respectively, and therefore each of the $\V$-face $\Delta_\V=\{V',V,W\}$ and the $\W$-face $\Delta_\W=\{V,W,W'\}$ has one equivalent class by Lemma \ref{lemma-equivalent}.
This gives the $\V$- and $\W$-facial clusters having one equivalent class containing $\Delta_\V$ and $\Delta_\W$, say $\varepsilon_\V$ and $\varepsilon_\W$, respectively, by Lemma \ref{lemma-DorE}, and they share the common center $(V,W)$ by Definition \ref{definition-2-14}.
Using $\varepsilon_\V\cup\varepsilon_\W$, we can find an equivalent cluster $\mathcal{B}$ of type (a)-(i) by Definition \ref{definition-3-3-detail} and $\mathcal{B}$ contains $\Sigma$ by Claim \ref{claim-5}.

If there is another equivalent cluster containing $\Sigma$, say $\mathcal{B}'$, then $\mathcal{B}$ and $\mathcal{B}'$ share the weak reducing pairs in $\Sigma$, violating Lemma \ref{lemma-BB-char}.

This completes the proof.
\end{proof}

\begin{definition}
Let $M$ and $F$ be as in Lemma \ref{lemma-five-GHSs}.
Let $\Sigma$ be a simplex of $\D(F)$ containing at least one weak reducing pair.
If there is only one equivalent class of the GHSs obtained by weak reductions along the weak reducing pairs in $\Sigma$, then we call $\Sigma$ an \textit{equivalent simplex}.
If there are two or more equivalent classes of the GHSs obtained by weak reductions along the weak reducing pairs in $\Sigma$, then we call $\Sigma$ a \textit{nonequivalent simplex}.
\end{definition}

In summary, we get the following theorem.

\begin{theorem}
\label{theorem-DVWF}
Let $M$ and $F$ be as in Lemma \ref{lemma-five-GHSs}.
Then $\DVW(F)$ consists of equivalent clusters and nonequivalent simplices.
Moreover, each equivalent simplex belongs to a uniquely determined equivalent cluster.
\end{theorem}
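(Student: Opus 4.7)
The plan is to organize the simplices of $\DVW(F)$ by dimension and invoke the preceding lemmas. First I would observe that any simplex $\Sigma$ of $\DVW(F)$ carries at least one weak reducing pair, namely the $1$-subsimplex spanned by any $\V$-vertex and any $\W$-vertex of $\Sigma$, both of which exist by the definition of $\DVW(F)$. Hence every such $\Sigma$ is either an equivalent simplex or a nonequivalent simplex, and the first assertion of the theorem reduces to the second, namely that every equivalent simplex lies in a uniquely determined equivalent cluster.

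To handle this, I would argue by $\dim(\Sigma)$ for an equivalent simplex $\Sigma$. Since some weak reducing pair lies in $\Sigma$ and all of its weak reducing pairs produce equivalent GHSs, Lemma \ref{lemma-restrict} forces $\dim(\Sigma)\in\{1,2,3\}$. When $\dim(\Sigma)=1$, $\Sigma$ is itself a weak reducing pair and Lemma \ref{lemma-BB-char} gives the unique equivalent cluster containing it. When $\dim(\Sigma)=2$, the three vertices of $\Sigma$ split as $(2,1)$ or $(1,2)$ between $\DV(F)$ and $\DW(F)$ (the two subcomplexes are disjoint as vertex sets of $\D(F)$), so $\Sigma$ is a $\V$-face or $\W$-face; since its two weak reducing pairs give equivalent GHSs, $\Sigma$ is a $\V$- or $\W$-face having one equivalent class and Lemma \ref{lemma-BB-some} supplies the unique equivalent cluster containing it. When $\dim(\Sigma)=3$, Lemma \ref{lemma-equiv-BB} directly yields the unique equivalent cluster of type (a)-(i) containing $\Sigma$.

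Uniqueness across all three cases is automatic: if two distinct equivalent clusters both contained $\Sigma$, they would share the weak reducing pair inside $\Sigma$, contradicting Lemma \ref{lemma-BB-char}. I expect no substantive obstacle here, as the theorem is essentially a synthesis of the earlier structural results; the only point requiring brief care is confirming the $(2,1)/(1,2)$ dichotomy for $2$-simplices, which is immediate from the disjointness of $\DV(F)$ and $\DW(F)$ together with the fact that $\Sigma\in\DVW(F)$ must meet each of them.
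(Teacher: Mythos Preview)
Your proposal is correct and follows essentially the same approach as the paper: both arguments note that $\DVW(F)$ is the union of simplices meeting both $\DV(F)$ and $\DW(F)$, use Lemma~\ref{lemma-restrict} to bound the dimension of equivalent simplices by three, and then dispatch the cases $\dim\Sigma=1,2,3$ via Lemmas~\ref{lemma-BB-char}, \ref{lemma-BB-some}, and \ref{lemma-equiv-BB}, respectively. Your explicit justification of the $(2,1)/(1,2)$ split for $2$-simplices and your closing uniqueness remark are small clarifications the paper leaves implicit, but the logical content is the same.
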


\begin{proof}
Recall that $\DVW(F)$ consists of all simplices of $\D(F)$ intersecting both $\DV(F)$ and $\DW(F)$, i.e. it is the union of all equivalent simplices and all nonequivalent simplices.

Considering Lemma \ref{lemma-BB-char}, Lemma \ref{lemma-BB-some} and Lemma \ref{lemma-equiv-BB}, every equivalent simplex of dimension at most three belongs to a uniquely determined equivalent cluster.
Moreover, Lemma \ref{lemma-restrict} means there is no equivalent simplex of dimension at least four.
Hence, all equivalent simplices are contained  in the union of all equivalent clusters.
Therefore, the union of all equivalent clusters and  all nonequivalent simplices covers $\DVW(F)$.
Since every equivalent cluster is a subset of $\DVW(F)$, we conclude the previous union  is exactly the same as $\DVW(F)$.
 
This completes the proof.
\end{proof}

\section{the proof of Theorem \ref{theorem-main-a}}
In this section, we prove Theorem \ref{theorem-main-a}.

The next lemma describes the way how two different equivalent clusters intersect each other.

\begin{lemma}\label{lemma-BB-intersect}
Suppose $M$ is an irreducible $3$-manifold and $(\V,\W;F)$ is a weakly reducible, unstabilized Heegaard splitting of $M$ of genus $n\geq 3$.
If two different equivalent clusters $\mathcal{B}_1$ and $\mathcal{B}_2$ intersect each other, then  the center of $\mathcal{B}_1$ intersects that of $\mathcal{B}_2$ in a vertex.
\end{lemma}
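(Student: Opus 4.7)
The plan is to combine two previously established facts: (i) by Lemma \ref{lemma-6x}, a vertex of $\mathcal{B}_1\cap\mathcal{B}_2$ lying in $\DV(F)$ (resp.\ $\DW(F)$) forces $\bar V_1=\bar V_2$ (resp.\ $\bar W_1=\bar W_2$), and (ii) the center of an equivalent cluster determines the cluster itself, a fact I will verify type-by-type. Since $\mathcal{B}_1\cap\mathcal{B}_2$ is a non-empty subcomplex of $\D(F)$, it contains at least one vertex $D\in\DV(F)\cup\DW(F)$.

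First I would treat the case $D\in\DV(F)$: Lemma \ref{lemma-6x} gives $\bar V_1=\bar V_2$, so the $1$-simplices $\{\bar V_1,\bar W_1\}$ and $\{\bar V_2,\bar W_2\}$ share at least the vertex $\bar V_1$. The case $D\in\DW(F)$ is symmetric and yields $\bar W_1=\bar W_2$. So in either case the two centers share at least one vertex.

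Next I would rule out that the centers agree at both vertices. Suppose for contradiction $\bar V_1=\bar V_2$ and $\bar W_1=\bar W_2$; then the centers coincide as weak reducing pairs, so weak reductions along them produce identical GHSs, and by Lemma \ref{lemma-3-9-ii} the two clusters are of the same type in the sense of Definition \ref{definition-GHSs}. In every type the cluster is determined by its center: for types (a)-(iii), (b)-$\W$-(ii), (b)-$\V$-(ii), (c) and (d), the cluster \emph{is} its center (Definition \ref{definition-3-6}); for types (a)-(ii)-$\W$ and (b)-$\W$-(i) (respectively (a)-(ii)-$\V$ and (b)-$\V$-(i)) it equals the unique $\W$-facial (respectively $\V$-facial) cluster having one equivalent class with the given center, by Corollary \ref{corollary-DorE}; and for type (a)-(i) it equals the union of all $3$-simplices $\Sigma_{V'W'}$ satisfying the condition $(\ast)$ in the proof of Lemma \ref{lemma-type-a-i} and containing the common center, by Claim~5 there. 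Hence $\mathcal{B}_1=\mathcal{B}_2$, contradicting the hypothesis.

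Therefore exactly one of $\bar V_1=\bar V_2$ and $\bar W_1=\bar W_2$ holds, so the two centers meet in precisely one vertex, as required. The main obstacle is the ``center determines cluster'' step, but this reduces to a routine type-by-type check that is essentially contained in the proofs of Lemmas \ref{lemma-type-a-i} and \ref{lemma-type-a-ii} together with Definition \ref{definition-3-6}; the only substantive case is type (a)-(i), handled by Claim~5 in the proof of Lemma \ref{lemma-type-a-i}.
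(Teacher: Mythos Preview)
Your proof is correct and follows essentially the same route as the paper: both arguments use Lemma \ref{lemma-6x} to show that a shared vertex in $\DV(F)$ (resp.\ $\DW(F)$) forces $\bar V_1=\bar V_2$ (resp.\ $\bar W_1=\bar W_2$), and then argue that both equalities cannot hold simultaneously. The only difference is that where you reprove ``center determines cluster'' by a type-by-type check, the paper simply invokes Lemma \ref{lemma-BB-char}, which already says that two different equivalent clusters cannot share a weak reducing pair---so if $(\bar V_1,\bar W_1)=(\bar V_2,\bar W_2)$ then $\mathcal{B}_1=\mathcal{B}_2$ immediately.
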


\begin{proof}
Let $(\bar{V}_i,\bar{W}_i)$ be the center of $\mathcal{B}_i$ for $i=1,2$.
If $(\mathcal{B}_1\cap\mathcal{B}_2)\cap \DV(F)\neq \emptyset$, then $\bar{V}_1=\bar{V}_2$ by Lemma \ref{lemma-6x}.
Likewise, if $(\mathcal{B}_1\cap\mathcal{B}_2)\cap \DW(F)\neq \emptyset$, then $\bar{W}_1=\bar{W}_2$.
But at least one of $(\mathcal{B}_1\cap\mathcal{B}_2)\cap \DV(F)$ and $(\mathcal{B}_1\cap\mathcal{B}_2)\cap \DW(F)$ must be empty otherwise $(\bar{V}_1,\bar{W}_1)=(\bar{V}_2,\bar{W}_2)$, violating Lemma \ref{lemma-BB-char}.

This completes the proof.
\end{proof}

\begin{lemma}\label{lemma-equivalent-GHSs}
Let $M$ and $F$ be as in Lemma \ref{lemma-BB-intersect}.
Then every component of $\DVW(F)$ is an equivalent cluster if and only if every $\V$- or $\W$-face has one equivalent class.
\end{lemma}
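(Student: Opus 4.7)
The forward direction is immediate: if every component of $\DVW(F)$ is an equivalent cluster, then any $\V$- or $\W$-face $\Delta$ lies in a single equivalent cluster, so by Lemma \ref{lemma-3-9-ii} the two weak reducing pairs of $\Delta$ yield equivalent GHSs and $\Delta$ has one equivalent class.

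For the converse, assume every $\V$- or $\W$-face has one equivalent class. I first observe that every simplex of $\DVW(F)$ is then an equivalent simplex: for any $k$-simplex $\Sigma$ with $k\geq 2$ and any two weak reducing pairs $(V_a,W_a),(V_b,W_b)\in\Sigma$, either they share a $\V$- or $\W$-disk, in which case they lie in a common $\V$- or $\W$-face of $\Sigma$, or all four disks are mutually disjoint in $\Sigma$, in which case they lie in a common $3$-sub-simplex that decomposes into $\V$- and $\W$-faces linking them; transitivity of $\sim$ together with the hypothesis identifies all weak reductions from $\Sigma$. Theorem \ref{theorem-DVWF} then presents $\DVW(F)$ as the union of its equivalent clusters with no nonequivalent simplices.

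Now fix a component $C$ of $\DVW(F)$ and a weak reducing pair $(V_0,W_0)\in C$, contained in a unique cluster $\mathcal{B}_0$ by Lemma \ref{lemma-BB-char}; since $\mathcal{B}_0$ is connected by Lemma \ref{lemma-3-9}, $\mathcal{B}_0\subset C$. Assuming for contradiction that $\mathcal{B}_0\subsetneq C$, the connectivity of $C$ together with the cluster decomposition from the previous paragraph lets us find a distinct cluster $\mathcal{B}_1\subset C$ with $\mathcal{B}_0\cap\mathcal{B}_1\neq\emptyset$. Lemma \ref{lemma-BB-intersect} then gives, after swapping $\V$ and $\W$ if necessary, a common center $\V$-vertex $V=\bar V_{\mathcal{B}_0}=\bar V_{\mathcal{B}_1}$ while the center $\W$-disks $\bar W_0\neq \bar W_1$.

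When $\bar W_0$ and $\bar W_1$ are disjoint, the simplex $\{V,\bar W_0,\bar W_1\}$ is a $\W$-face of $\DVW(F)$; by hypothesis it has one equivalent class, so Theorem \ref{theorem-DVWF} places it in a single cluster, putting both centers in one cluster and violating Lemma \ref{lemma-BB-char}. The main obstacle, and the technical heart of the proof, is the remaining case $\bar W_0\cap\bar W_1\neq\emptyset$: here one applies iterated disk surgery on $\bar W_1$ along the outermost arcs or innermost loops of $\bar W_0\cap\bar W_1$ inside the compression body $\W$. Each surgery strictly decreases the intersection number with $\bar W_0$ and produces $\W$-disks still disjoint from $V$ (since $V$ misses $\bar W_0\cup\bar W_1$ throughout), and organizing the successive surgeries yields a finite chain $\bar W_0=W^{(0)},W^{(1)},\ldots,W^{(m)}=\bar W_1$ of $\W$-disks with $W^{(i)}\cap V=\emptyset$ and $W^{(i)}\cap W^{(i+1)}=\emptyset$. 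Each triple $\{V,W^{(i)},W^{(i+1)}\}$ is then a $\W$-face, and iterating the previous $\W$-face argument along the chain forces $(V,\bar W_0)$ and $(V,\bar W_1)$ into a single cluster, again contradicting the distinctness of $\mathcal{B}_0$ and $\mathcal{B}_1$. Hence $C=\mathcal{B}_0$, finishing the proof.
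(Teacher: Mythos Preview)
Your proof is correct and follows the same overall architecture as the paper's, but with two differences in execution worth noting.

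For the forward direction, the paper argues structurally: given a $\V$-face $\Delta$ inside a cluster $\mathcal{B}$, it uses the star-shaped description of $\mathcal{B}\cap\DV(F)$ from Lemma~\ref{lemma-3-9} to identify the two $\V$-disks of $\Delta$ as a separating disk cutting off a solid torus together with its meridian, and then invokes Lemma~\ref{lemma-equivalent}. Your direct appeal to Lemma~\ref{lemma-3-9-ii} is shorter and entirely sufficient.

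For the converse, both arguments reduce to showing that two distinct clusters in the same component cannot meet. The paper obtains the needed chain of $\V$- and $\W$-faces linking the two centers by citing Lemma~8.4 of \cite{Bachman2008} as a black box. You instead exploit the fact (from Lemma~\ref{lemma-BB-intersect}) that the centers already share a vertex $V$, and then build the chain by hand as a sequence of $\W$-faces $\{V,W^{(i)},W^{(i+1)}\}$ via outermost-disk surgery in $\W$. This is a genuine alternative that trades the external citation for an explicit construction, and it is slightly sharper in that it uses only $\W$-faces rather than a mixed chain. Your surgery sketch is essentially correct, but to make it airtight you should record two standard points that ``organizing the successive surgeries'' hides: (i) at each step at least one of the two disks produced by the surgery has essential boundary in $F$ (otherwise a pair-of-pants cobordism between $\partial D'$, $\partial D''$ and $\partial \bar W_1$ forces $\partial\bar W_1$ to bound a disk in $F$), so you always have a compressing disk to continue with; and (ii) if the chosen surgered disk happens to be isotopic to its predecessor, one simply replaces the representative and continues with the strictly smaller intersection number rather than inserting a degenerate step, so that consecutive terms of the final chain are genuinely distinct vertices and each $\{V,W^{(i)},W^{(i+1)}\}$ is an honest $2$-simplex.
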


\begin{proof}
$(\Rightarrow)$ Suppose every component of $\DVW(F)$ is an equivalent cluster.
Let $\Delta$ be a $\V$-face $\{V,V',W\}$ without loss of generality.
Then $\Delta$ belong to an equivalent cluster $\mathcal{B}$ by the assumption, i.e. $\{V,V'\}=\Delta\cap\DV(F)\subset\mathcal{B}\cap \DV(F)$.
Considering Lemma \ref{lemma-3-9}, there is a $\V$-facial cluster having one equivalent class in $\mathcal{B}$, say $\varepsilon_\V$, such that $\varepsilon_\V\cap\DV(F)=\mathcal{B}\cap\DV(F)$ and the intersection is a star-shaped graph.
This means the $1$-simplex $\{V,V'\}$ is an edge of the star-shaped graph $\varepsilon_\V\cap\DV(F)$, i.e. $\{V,V'\}$ belongs to a $\V$-face having one equivalent class in $\varepsilon_\V$, say $\Delta_\V$.
Therefore, we can assume $V'$ cuts off a solid torus $
\V'$ from $\V$ and $V$ is a meridian disk of $\V'$ by Lemma \ref{lemma-equivalent}.
Hence, $\Delta$ also has one equivalent class by Lemma \ref{lemma-equivalent}.\\

$(\Leftarrow)$ Suppose every $\V$- or $\W$-face has one equivalent class.

\begin{claim}\label{claim-9}
There is no simplex of dimension at least four in $\DVW(F)$.
Moreover, if there is a $3$-simplex in $\DVW(F)$, then it has the form in the statement of Lemma \ref{lemma-restrict}.
\end{claim}

\begin{proof}
Note that every highest dimensional simplex of $\DVW(F)$ must intersect both $\DV(F)$ and $\DW(F)$.
(Otherwise, there would be a highest dimensional simplex $\Delta$ of $\DVW(F)$ such that $\Delta\subset\DV(F)$ or $\DW(F)$ and therefore $\Delta$ cannot be a proper subsimplex of a simplex intersecting both $\DV(F)$ and $\DW(F)$, violating the definition of $\DVW(F)$.)

Suppose $\dim(\DVW(F))=k$ and there is a $k$-simplex $\Sigma=\{V_1, \cdots, V_m, W_1, \cdots, W_n\}$ $\subset \DVW(F)$, where $V_i\subset \V$ for $1\leq i \leq m$ $(1\leq m)$, $W_i\subset \W$ for $1\leq i \leq n$ $(1\leq n)$, and $m+n=k+1$.
Considering the assumption that every $\V$- or $\W$-face has one equivalent class and the arguments in the proof of Lemma \ref{lemma-restrict}, we get $m,n\leq 2$ and confirm the desired shape of a $3$-simplex in $\DVW(F)$.

This completes the proof of Claim \ref{claim-9}.
\end{proof}

\begin{claim}\label{claim-10}
Every simplex of $\D(F)$ intersecting both $\DV(F)$ and $\DW(F)$ is contained in an equivalent cluster.
\end{claim}

\begin{proofN}{Claim \ref{claim-10}}
By Claim \ref{claim-9}, it is sufficient to consider simplices of dimension at most three.
Considering Lemma \ref{lemma-BB-char}, every weak reducing pair belongs to an equivalent cluster.
Moreover, the assumption that every $\V$- or $\W$-face has one equivalent class induces that every $\V$- or $\W$-face belongs to an equivalent cluster by Lemma \ref{lemma-BB-some}.
Suppose $\Sigma$ is a $3$-simplex intersecting both $\DV(F)$ and $\DW(F)$.
By Claim \ref{claim-9}, $\Sigma$ has the form in the statement of Lemma \ref{lemma-restrict}.
Hence, considering the argument in the proof of Lemma \ref{lemma-equiv-BB}, we can find an equivalent cluster containing $\Sigma$.

This completes the proof of Claim \ref{claim-10}.
\end{proofN}

Let $\mathcal{C}$ be a component of $\DVW(F)$.
Then $\mathcal{C}=\cup_{\alpha\in\mathcal{A}} \sigma_\alpha$, where each $\sigma_\alpha$ is a simplex intersecting both $\DV(F)$ and $\DW(F)$ by the definition of $\DVW(F)$ and $\mathcal{A}$ is an index set.
Hence, considering Claim \ref{claim-10}, $\mathcal{C}\subset \cup_{\alpha\in\mathcal{A}} \mathcal{B}_\alpha$, where $\mathcal{B}_\alpha$ is an equivalent cluster containing $\sigma_\alpha$.
Moreover, every equivalent cluster is a connected subset of $\DVW(F)$ and therefore $ \mathcal{B}_\alpha\subset \mathcal{C}$ for all $\alpha\in\mathcal{A}$, i.e. $\cup_{\alpha\in\mathcal{A}} \mathcal{B}_\alpha\subset \mathcal{C} $, and therefore we conclude $\mathcal{C}=\cup_{\alpha\in\mathcal{A}} \mathcal{B}_\alpha$.

It is sufficient to show that $\mathcal{C}$ consists of only one $\mathcal{B}_\alpha$.
For the sake of contradiction, assume there exist  $\mathcal{B}_{\alpha_1}$ and $\mathcal{B}_{\alpha_2}$ such that $\mathcal{B}_{\alpha_1}\neq\mathcal{B}_{\alpha_2}$.
If $\mathcal{B}_\alpha\cap\mathcal{B}_\beta=\emptyset$ for all mutually different $\mathcal{B}_\alpha$ and $\mathcal{B}_\beta$, then $\mathcal{C}$ is disconnected, leading to a contradiction, i.e.  we can assume $\mathcal{B}_{\alpha_1}\cap\mathcal{B}_{\alpha_2}\neq\emptyset$.
Let $(\bar{V}_i,\bar{W}_i)$ be the center of $\mathcal{B}_{\alpha_i}$ for $i=1,2$.
Then $(\bar{V}_1,\bar{W}_1)$ intersects $(\bar{V}_2,\bar{W}_2)$ in a vertex by Lemma \ref{lemma-BB-intersect}.
Therefore, there is a sequence of $\V$- and $\W$-faces $\Delta_0$, $\Delta_1$, $\cdots$, $\Delta_n$ such that the following hold by Lemma 8.4 of \cite{Bachman2008}:
\begin{enumerate}
\item $\Delta_{i-1}\cap \Delta_i$ is a weak reducing pair for $1\leq i \leq n$ and
\item $(\bar{V}_1,\bar{W}_1)\subset \Delta_0$ and $(\bar{V}_2,\bar{W}_2)\subset \Delta_n$.
\end{enumerate}
By the assumption that every $\V$- or $\W$-face has one equivalent class, each $\Delta_i$ has one equivalent class for $0\leq i \leq n$.
Therefore, each $\Delta_i$ belongs to a uniquely determined equivalent cluster $\mathcal{B}^i$ for $0\leq i \leq n$ by Lemma \ref{lemma-BB-some}.
Since two different equivalent clusters cannot share a weak reducing pair by Lemma \ref{lemma-BB-char}, we conclude $\mathcal{B}_{\alpha_1}=\mathcal{B}^0=\cdots=\mathcal{B}^n=\mathcal{B}_{\alpha_2}$, violating the assumption that $\mathcal{B}_{\alpha_1}\neq\mathcal{B}_{\alpha_2}$.

This completes the proof.
\end{proof}

Finally, we reach Theorem \ref{theorem-not-minimal-critical}.

\begin{theorem}\label{theorem-not-minimal-critical}
Suppose $M$ is an orientable, irreducible $3$-manifold and $(\V,\W;F)$ is a weakly reducible, unstabilized Heegaard splitting of $M$ of genus at least three.
If every $\V$- or $\W$-face has one equivalent class, then $F$ is either not topologically minimal or of topological index two  if it is topologically minimal.
\end{theorem}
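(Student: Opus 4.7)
The plan is to determine the homotopy type of $\D(F)$ completely in this case, then read off the topological index. By the hypothesis and Lemma \ref{lemma-equivalent-GHSs}, every component of $\DVW(F)$ is an equivalent cluster; let $\{\mathcal{C}_\alpha\}_{\alpha \in A}$ be the collection of these components. Because $F$ is weakly reducible, any weak reducing pair already yields a $1$-simplex in $\DVW(F)$, so $A$ is non-empty. I would then consider the closed cover
\[
\mathcal{U} \;=\; \{\DV(F),\,\DW(F)\} \cup \{\mathcal{C}_\alpha\}_{\alpha \in A}
\]
of $\D(F)$ by subcomplexes. Each member of $\mathcal{U}$ is contractible: $\DV(F)$ and $\DW(F)$ are contractible by \cite{8}, and each $\mathcal{C}_\alpha$ is contractible by Lemma \ref{lemma-3-9}.

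Next I would verify that every non-empty multiple intersection among members of $\mathcal{U}$ is contractible. We have $\DV(F) \cap \DW(F) = \emptyset$ (they share no vertices) and $\mathcal{C}_\alpha \cap \mathcal{C}_\beta = \emptyset$ for $\alpha \neq \beta$ (distinct components). Each $\DV(F) \cap \mathcal{C}_\alpha$ is either a single vertex or a star-shaped graph, hence contractible by Lemma \ref{lemma-3-9}, and likewise for $\DW(F) \cap \mathcal{C}_\alpha$. Every triple intersection contains one of the empty pairs above and is therefore empty.

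Applying the nerve lemma for closed covers of a simplicial complex by subcomplexes with empty or contractible multiple intersections, one obtains $\D(F) \simeq N(\mathcal{U})$. The nerve has vertex set $\{\DV(F), \DW(F)\} \cup \{\mathcal{C}_\alpha\}_{\alpha \in A}$ with edges only of the form $\{\DV(F), \mathcal{C}_\alpha\}$ or $\{\DW(F), \mathcal{C}_\alpha\}$; in particular, it has no simplices of dimension greater than one. Hence $N(\mathcal{U})$ is the complete bipartite graph $K_{2,|A|}$. If $|A| = 1$, the nerve is a path, $\D(F)$ is contractible, and $F$ is not topologically minimal. If $|A| \geq 2$, then $\D(F)$ is homotopy equivalent to a (possibly infinite) wedge of circles, so $\pi_1(\D(F)) \neq 0$ while $\pi_k(\D(F)) = 0$ for every $k \geq 2$; thus $F$ is topologically minimal with topological index exactly two.

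The main potential obstacle is a rigorous application of the nerve lemma to this closed simplicial cover. If no directly quotable version is available, the fallback is to construct the homotopy equivalence $\D(F) \to N(\mathcal{U})$ by hand: because each $\mathcal{C}_\alpha$ is a contractible subcomplex meeting $\DV(F)$ and $\DW(F)$ each in a contractible subcomplex, one can collapse each $\mathcal{C}_\alpha$ onto an arc joining $\DV(F)$ to $\DW(F)$ and then contract $\DV(F)$ and $\DW(F)$ each to a point, thereby realizing $K_{2,|A|}$ as a deformation retract of $\D(F)$.
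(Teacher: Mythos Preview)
Your argument is correct and actually yields a sharper conclusion than the paper's proof, but it proceeds along a genuinely different line. The paper argues by a two-case split on whether $\DVW(F)$ is connected: in the disconnected case it exhibits an explicit partition $\{C_0,C_1\}$ of the set of compressing disks satisfying Bachman's combinatorial criticality criterion and then quotes his theorem that critical surfaces have topological index two; in the connected case it observes that $\DVW(F)$ is a single equivalent cluster with contractible intersections with $\DV(F)$ and $\DW(F)$, and then cites the contractibility argument from \cite{JungsooKim2014}. Your nerve-lemma computation handles both cases at once and in fact determines the full homotopy type $\D(F)\simeq K_{2,|A|}$, so you prove that $F$ has index \emph{exactly} two whenever $|A|\ge 2$ rather than merely that it is critical. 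The trade-off is that the paper's disconnected case is completely elementary---no homotopy theory beyond the definition of critical---while your route needs a simplicial nerve lemma for closed covers by subcomplexes; the fallback collapse you sketch is the right way to make this self-contained if a clean citation is unavailable.
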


\begin{proof}
Since every $\V$- or $\W$-face has one equivalent class, each component of $\DVW(F)$ is an equivalent cluster by Lemma \ref{lemma-equivalent-GHSs}.

Suppose that $\DVW(F)$ is disconnected, i.e. there are at least two mutually disjoint equivalent clusters $\mathcal{B}_0$ and $\mathcal{B}_1$.
Let $\bar{\D}(F)$ be the set of all compressing disks of $F$.
Then $\bar{\D}(F)$ can be partitioned into two subsets $C_0$ and $C_1$ as the following:
\begin{enumerate}
\item $C_0$ consists of compressing disks of $F$ whose isotopy classes are the vertices of $\mathcal{B}_0$.
\item $C_1=\bar{\D}(F)-C_0$.
\end{enumerate}
Then we can see $C_0$ and $C_1$ contain the compressing disks whose isotopy classes belong to $\mathcal{B}_0$ and $\mathcal{B}_1$, respectively, i.e. there is a weak reducing pair $(V_i,W_i)$ coming from $C_i$ for $i=0,1$.

We claim that for any $D\in C_0$ and $E\in C_1$, the pair $(D,E)$ cannot be a weak reducing pair.
For the sake of contradiction, suppose that $(D,E)$ is a weak reducing pair.
Then $(D,E)$ (as a pair of isotopy classes) is contained in some component of $\DVW(F)$.
But the assumption $D\in C_0$ means $(D,E)$ belongs to the component $\mathcal{B}_0$ and therefore this forces $E$ to belong to $C_0$,  violating the assumption that $C_0\cap C_1=\emptyset$.

Hence, the existence of the partition $\{C_0,C_1\}$ of $\bar{\D}(F)$ means $F$ is critical in the sense of \cite{Bachman2008} and therefore $F$ is of topological index two by Theorem 2.5 of \cite{Bachman2010}.

Suppose $\DVW(F)$ is connected.
Then $\DVW(F)$ is just an equivalent cluster, where it is contractible and $\DVW(F)\cap\DV(F)$ and $\DVW(F)\cap\DW(F)$ are also contractible by Lemma \ref{lemma-3-9}.
Therefore, $\D(F)$ is contractible by using the same arguments in the proof of Theorem 1.1 of \cite{JungsooKim2014}. 
Hence, $F$ is not topologically minimal, leading to the result.

This completes the proof.
\end{proof}

\section{the proof of Theorem \ref{theorem-1-1}}

In this section, we will prove Theorem \ref{theorem-1-1}.

Suppose there are two equivalent clusters such that the corresponding equivalent classes determined by $\Phi$ are the same.
The next lemma gives a sufficient condition that these two equivalent clusters are the same.

\begin{lemma}\label{lemma-3-11}
Suppose $M$ is an orientable, irreducible $3$-manifold and $(\V,\W;F)$ is a weakly reducible, unstabilized Heegaard splitting of $M$ of genus at least three.
Let $\mathcal{B}_1$ and $\mathcal{B}_2$ be equivalent clusters such that $\Phi(\mathcal{B}_1)=\Phi(\mathcal{B}_2)$ and $(\bar{V}_i,\bar{W}_i)$ the center of $\mathcal{B}_i$ for $i=1,2$.
Suppose if $\bar{V}_i$ (resp $\bar{W}_i$) is separating, then the closure of the component of $\V-\bar{V}_i$ (resp $\W-\bar{W}_i$) missing $\partial\bar{W}_i$ (resp $\partial\bar{V}_i$) is $S\times I$, where $S$ is a component of $\partial_-\V$ (resp $\partial_-\W$) for $i=1,2$.
Then $\mathcal{B}_1=\mathcal{B}_2$.
\end{lemma}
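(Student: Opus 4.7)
To prove Lemma \ref{lemma-3-11}, my plan is to show that the two centers coincide as weak reducing pairs, meaning $\bar{V}_1$ is isotopic to $\bar{V}_2$ in $\V$ and $\bar{W}_1$ is isotopic to $\bar{W}_2$ in $\W$; once that is in hand, $\mathcal{B}_1$ and $\mathcal{B}_2$ share a weak reducing pair, so Lemma \ref{lemma-BB-char} forces $\mathcal{B}_1 = \mathcal{B}_2$. By symmetry it suffices to handle $\bar{V}_1$ and $\bar{V}_2$; the treatment of $\bar{W}_1$ and $\bar{W}_2$ is identical.

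From $\Phi(\mathcal{B}_1)=\Phi(\mathcal{B}_2)$ the two GHSs obtained by weak reductions along the centers are equivalent, so in particular $\bar{F}_{\bar{V}_1}$ is isotopic to $\bar{F}_{\bar{V}_2}$ in $\V$; moreover, Lemma \ref{lemma-types} tells us that the two centers are of the same type, so $\bar{V}_1$ and $\bar{V}_2$ are either both nonseparating or both separating in $\V$. If both are nonseparating, then $F_{\bar{V}_i}$ is connected and Corollary \ref{corollary-five-GHSs} gives $\bar{F}_{\bar{V}_i}=\tilde{F}_{\bar{V}_i}$, so Lemma \ref{lemma-2-21}\,(1) immediately yields the desired isotopy of $\bar{V}_1$ and $\bar{V}_2$ in $\V$.

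Assume instead that $\bar{V}_1$ and $\bar{V}_2$ are both separating. By the hypothesis of the lemma each $\bar{V}_i$ cuts off a trivial collar $S_i\times I$ from $\V$, where $S_i$ is a component of $\partial_-\V$. The first key observation is that $S_1=S_2$: the $F$-side of $\bar{F}_{\bar{V}_i}$ in $\V$ meets $\partial_-\V$ in precisely the single component $S_i$, while the ambient isotopy of $\V$ taking $\bar{F}_{\bar{V}_2}$ to $\bar{F}_{\bar{V}_1}$ preserves $F=\partial_+\V$ and each component of $\partial_-\V$ setwise, and therefore must identify $S_2$ with $S_1$. Call this common component $S$, and let $\tilde{\V}$ be the closure of the component of $\V-\bar{F}_{\bar{V}_1}$ that contains $F$; after performing the isotopy, both $\bar{V}_1$ and the isotopic image $\bar{V}_2'$ of $\bar{V}_2$ become compressing disks for $F$ inside $\tilde{\V}$. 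By Lemma \ref{lemma-region}, $\tilde{\V}$ is a compression body with $\partial_+\tilde{\V}=F$ of genus $n$, and a direct check gives $\partial_-\tilde{\V}=\bar{F}_{\bar{V}_1}\sqcup S$ with $g(\bar{F}_{\bar{V}_1})+g(S)=(n-g(S))+g(S)=n$. Since $\partial_-\V$ has no sphere components we have $g(S)\ge 1$, and since $\V$ is a nontrivial compression body (it carries the compressing disk $\bar{V}_1$) we have $g(S)\le n-1$. Hence Lemma \ref{lemma-2-20} applies to $\tilde{\V}$ and there is a unique compressing disk in $\tilde{\V}$ up to isotopy, so $\bar{V}_1$ is isotopic to $\bar{V}_2'$ in $\tilde{\V}$, and then in $\V$ by Corollary \ref{corollary-isotopic-bd}.

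The main obstacle in the separating case is to pin down that the two collars $S_i\times I$ sit on the same component of $\partial_-\V$; this boundary-identification step is what lets us reduce the problem to the clean uniqueness statement of Lemma \ref{lemma-2-20} instead of invoking Lemma \ref{lemma-2-22} together with a further argument for making $\partial\bar{V}_1$ and $\partial\bar{V}_2'$ disjoint in $F$. The genus-counting identity $g(\partial_+\tilde{\V})=\sum g(\text{comp.\ of }\partial_-\tilde{\V})$ is the technical heart of the argument, and the $S\times I$ hypothesis on the separating centers is precisely what forces this equality, so the hypothesis is being used in an essentially sharp way.
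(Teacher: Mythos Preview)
Your proof is correct and follows essentially the same route as the paper's: show the centers coincide by applying Lemma \ref{lemma-2-21}(1) in the nonseparating case and Lemma \ref{lemma-2-20} to the compression body $\tilde{\V}$ on the $F$-side of $\bar{F}_{\bar{V}_1}$ in the separating case, then finish with Lemma \ref{lemma-BB-char}. One minor remark: your step identifying $S_1=S_2$, while correct, is not actually needed---the structure of $\partial_-\tilde{\V}$ is already pinned down by the hypothesis on $\bar{V}_1$ alone, and $h_1(\bar{V}_2)$ lands in $\tilde{\V}$ automatically once $h_1(\bar{F}_{\bar{V}_2})=\bar{F}_{\bar{V}_1}$, so Lemma \ref{lemma-2-20} applies directly.
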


\begin{proof}
Let $\mathbf{H}_i=(\bar{F}_{\bar{V}_i},\bar{F}_{\bar{V}_i \bar{W}_i},\bar{F}_{\bar{W}_i})$ be the GHS obtained by weak reduction from $(\V,\W;F)$ along $(\bar{V}_i,\bar{W}_i)$ for $i=1,2$.
Since $\mathbf{H}_2$ is equivalent to $\mathbf{H}_1$ by the assumption that $\Phi(\mathcal{B}_1)=\Phi(\mathcal{B}_2)$, we get the following:
\begin{enumerate}
\item $\bar{F}_{V_2}$ and $\bar{F}_{W_2}$ are isotopic to $\bar{F}_{V_1}$ and $\bar{F}_{W_1}$ in $\V$ and $\W$, respectively, and\label{aaaaa}
\item $\mathcal{B}_1$ and $\mathcal{B}_2$ are of the same type by Lemma \ref{lemma-types}.\label{aaaab}
\end{enumerate}
By (\ref{aaaab}),  $\bar{V}_1$ is nonseparating in $\V$ if and only if $\bar{V}_2$ is nonseparating in $\V$ and the symmetric argument also holds for $\bar{W}_1$ and $\bar{W}_2$ in $\W$.

Suppose $\bar{V}_1$ is nonseparating in $\V$, i.e. both $\bar{V}_1$ and $\bar{V}_2$ are nonseparating  in $\V$ by the previous argument.
Since  $\bar{F}_{\bar{V}_2}$ is isotopic to $\bar{F}_{\bar{V}_1}$ in $\V$ by (\ref{aaaaa}), considering Corollary \ref{corollary-five-GHSs}, $\bar{V}_2$ is isotopic to $\bar{V}_1$ in $\V$ by (\ref{lemma-2-21-1}) of Lemma \ref{lemma-2-21}.

Suppose $\bar{V}_1$ is separating in $\V$, i.e. both $\bar{V}_1$ and $\bar{V}_2$ are separating  in $\V$.
By the assumption, $\bar{V}_1$ cuts off $S\times I$ missing $\partial \bar{W}_1$ from $\V$ for a component $S\subset\partial_-\V$ of genus $k\geq 1$.
Here, the genus of $\bar{F}_{\bar{V}_1}$ is $n-k$ by Corollary \ref{corollary-five-GHSs}.
Let $\tilde{\V}$ be the closure of the component of $\V-\bar{F}_{\bar{V}_1}$ intersecting $F$.
Then $\tilde{\V}$ is a compression body of genus $n$ with negative boundary consisting of the genus $k$ component $S$ and the genus $n-k$ component $\bar{F}_{\bar{V}_1}$, where $\partial_+\tilde{\V}=\partial_+\V$, by Lemma \ref{lemma-region}.
By (\ref{aaaaa}), we can isotope $\bar{F}_{\bar{V}_2}$ into $\bar{F}_{\bar{V}_1}$ by an isotopy $h_t:\V\to\V$, $t\in[0,1]$ such that  $h_1(\bar{F}_{\bar{V}_2})=\bar{F}_{\bar{V}_1}$.
Here, $\bar{V}_1$ and $h_1(\bar{V}_2)$ are compressing disks in $\tilde{\V}$, i.e. $h_1(\bar{V}_2)$ is isotopic to $\bar{V}_1$ in $\tilde{\V}$ by Lemma \ref{lemma-2-20}.
Therefore, $h_1(\bar{V}_2)$ is isotopic to $\bar{V}_1$ in $\V$ by an isotopy $g_t$ defined on $\V$ by Corollary \ref{corollary-isotopic-bd}.
Hence, the sequence of isotopies consisting of $h_t$ and $g_t$ gives an isotopy defined on $\V$ that takes $\bar{V}_2$ into $\bar{V}_1$.

Therefore, $\bar{V}_2$ is isotopic to $\bar{V}_1$ in $\V$ in any case.
Likewise, we conclude $\bar{W}_2$ is isotopic to $\bar{W}_1$ in $\W$.
Hence, $(\bar{V}_1,\bar{W}_1)=(\bar{V}_2,\bar{W}_2)$ in $\D(F)$.
Therefore, we get $\mathcal{B}_1=\mathcal{B}_2$ by Lemma \ref{lemma-BB-char}.

This completes the proof.
\end{proof}

Finally, we reach Theorem \ref{theorem-1-1-main}.

\begin{theorem}\label{theorem-1-1-main}
Let $(\V,\W;F)$ be a weakly reducible, unstabilized Heegaard splitting of genus three  in an orientable, irreducible $3$-manifold $M$.
Then the domain of $\Phi$ is the set of components of $\DVW(F)$, $\Phi$ is bijective, and there is a canonically induced function $\Omega$ from the set of components of $\DVW(F)$ to the set of the isotopy classes of the generalized Heegaard splittings obtained by weak reductions from $(\V,\W;F)$.
The number of components of the preimage of an isotopy class of $\Omega$ is the number of ways to embed the thick level contained in $\V$ into $\V$ (or in $\W$ into $\W$).
This means if we consider a generalized Heegaard splitting $\mathbf{H}$ obtained by weak reduction from $(\V,\W;F)$, then the way to embed the thick level of $\mathbf{H}$ contained in $\V$ into $\V$ determines the way to embed the thick level of $\mathbf{H}$ contained in $\W$ into $\W$ up to isotopy and vise versa.
\end{theorem}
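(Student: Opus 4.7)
\noindent\emph{Proof plan for Theorem \ref{theorem-1-1-main}.}
Since $(\V,\W;F)$ has genus three, Corollary \ref{lemma-equivalent-genus3} gives that every $\V$- or $\W$-face has one equivalent class, and Lemma \ref{lemma-equivalent-GHSs} then forces each component of $\DVW(F)$ to be an equivalent cluster.  Hence the domain of $\Phi$ is precisely the set of components of $\DVW(F)$, and $\Phi$ is already surjective by Definition \ref{definition-Phi}.

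For the injectivity of $\Phi$ I would apply Lemma \ref{lemma-3-11}, whose hypothesis I must verify in genus three.  Let $(\bar{V},\bar{W})$ be the center of an equivalent cluster and suppose $\bar{V}$ is separating in $\V$; by Lemma \ref{lemma-center} it does not cut off a solid torus, and let $\V'$ denote the piece of $\V-\bar{V}$ missing $\partial\bar{W}$.  If $\V'$ had positive-boundary genus at least two, then the opposite piece of $\V-\bar{V}$ would have positive-boundary genus one, so the component of $F_{\bar{V}}$ containing $\partial\bar{W}$ would be a torus.  Since $\partial\bar{W}$ is essential, and hence nonseparating, in this torus, compressing along $\bar{W}$ would produce a sphere carrying scars of both $\bar{V}$ and $\bar{W}$, contradicting Lemma \ref{lemma-2-8}.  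Therefore $\V'$ has genus one, and not being a solid torus it must equal $T^2\times I$, which is $S\times I$ for the torus component $S\subset\partial_-\V$ contained in $\V'$.  The symmetric argument handles separating $\bar{W}$, so Lemma \ref{lemma-3-11} applies and $\Phi$ is injective.

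With $\Phi$ bijective, I define $\Omega$ as $\Phi$ post-composed with the natural surjection $\mathcal{GHS}_F^\ast/\sim\,\twoheadrightarrow\,\mathcal{GHS}_F$.  To count the fiber of $\Omega$ above an isotopy class $[\mathbf{H}]$, I would observe that in genus three the $\V$-isotopy class of the center disk $\bar{V}$ determines the cluster: two centers $(\bar{V},\bar{W}_1),(\bar{V},\bar{W}_2)$ sharing the same $\bar{V}$ would share the vertex $\bar{V}$ in the geometric realization of $\DVW(F)$ and hence lie in one component; but each component is a cluster with a unique center, so $\bar{W}_1=\bar{W}_2$.  Combining this with Lemma \ref{lemma-2-21} in the nonseparating case and with the argument of Lemma \ref{lemma-3-11} already verified in the separating case, the $\V$-isotopy class of the thick level $\bar{F}_{\bar{V}}$ pinpoints both $\bar{V}$ and hence the cluster.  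Consequently the equivalence classes inside $[\mathbf{H}]$ biject with the $\V$-isotopy classes of thick levels in $\V$ realizing $[\mathbf{H}]$, and by symmetry also with the corresponding $\W$-isotopy classes in $\W$, which supplies both the preimage count and the ``vice versa'' statement.

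The principal obstacle is the injectivity case analysis: one has to use Lemma \ref{lemma-2-8} to eliminate the GHS types (a)-(ii)-$\W$, (a)-(ii)-$\V$, (a)-(iii), (b)-$\W$-(ii), (b)-$\V$-(ii) in genus three, and check that in each surviving type (a)-(i), (b)-$\W$-(i), (b)-$\V$-(i), (c), (d) the center either consists of nonseparating disks or cuts off a trivial compression body $T^2\times I$, so that the hypothesis of Lemma \ref{lemma-3-11} is always satisfied.
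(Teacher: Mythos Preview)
Your plan is correct and tracks the paper's proof closely.  Both deduce that components of $\DVW(F)$ are equivalent clusters via Corollary \ref{lemma-equivalent-genus3} and Lemma \ref{lemma-equivalent-GHSs}, verify the hypothesis of Lemma \ref{lemma-3-11} in genus three (you argue the contrapositive via a sphere contradiction, the paper directly cites Lemma \ref{lemma-2-8} and Lemma~1.3 of \cite{ScharlemannThompson1993} to force the component of $F_{\bar V}$ containing $\partial\bar{W}$ to have genus two), and define $\Omega$ identically.

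For the preimage count the paper proceeds slightly differently from you: given isotopic thick levels in $\V$, it builds the genus-three compression body $\V'$ cobounded by $F$ and $\bar{F}_{V_1}$, and then cases on whether $\partial_-\V'$ is connected, applying Lemma \ref{lemma-2-19} or Lemma \ref{lemma-2-20} respectively to conclude the two center $\V$-disks are isotopic in $\V'$ (hence in $\V$); the contradiction then comes exactly from your observation that distinct clusters in genus three are distinct components and cannot share a vertex.  Your factorization through ``$\bar{F}_{\bar V}$ determines $\bar V$'' and ``$\bar V$ determines the cluster'' is also valid, but note that the first step needs the mixed case (one center disk separating, the other nonseparating) ruled out; Lemma \ref{lemma-2-21}(\ref{lemma-2-21-2}) together with the $T^2\times I$ property you already established does this, whereas the paper's casing on $\partial_-\V'$ sidesteps the issue entirely.
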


\begin{proof} 
By Corollary \ref{lemma-equivalent-genus3}, every $\V$- or $\W$-face has one equivalent class.
Therefore, Lemma \ref{lemma-equivalent-GHSs} means every component of $\DVW(F)$ is an equivalent cluster.

Let $\mathcal{B}_1$ and $\mathcal{B}_2$ be equivalent clusters such that $\Phi(\mathcal{B}_1)=\Phi(\mathcal{B}_2)$ and $(\bar{V}_i,\bar{W}_i)$ the center of $\mathcal{B}_i$ for $i=1,2$.
Considering Lemma \ref{lemma-center}, if $\bar{V}_i$ is separating, then it cannot cut off a solid torus from $\V$.
Moreover, since the genus of $F$ is three, the closure of the component of $\V-\bar{V}_i$ containing $\partial \bar{W}_i$ is a genus two compression body by Lemma \ref{lemma-2-8} and Lemma 1.3 of \cite{ScharlemannThompson1993}.
This means the closure of the other component  must be $(\text{torus})\times I$ if $\bar{V}_i$ is separating.
The symmetric argument also holds for $\bar{W}_i$.
Therefore, Lemma \ref{lemma-3-11} induces $\mathcal{B}_1=\mathcal{B}_2$, i.e. $\Phi$ is an injective function from the set of components of $\DVW(F)$ to $\mathcal{GHS}^\ast_F/\sim$.
Considering Definition \ref{definition-Phi}, we conclude $\Phi$ is bijective.

Recall that $\mathcal{GHS}_F$ is the set of isotopy classes of the GHSs obtained by weak reductions from $(\V,\W;F)$.
Since all GHSs in an equivalent class of $\mathcal{GHS}^\ast/\sim$ correspond to the same isotopy class,  there is a function $\Omega$ from the set of components of $\DVW(F)$ to $\mathcal{GHS}_F$ canonically induced from $\Phi$.

Let us consider the preimage $\Omega^{-1}([\mathbf{H}])$ for an isotopy class $[\mathbf{H}]\in\mathcal{GHS}_F$.
Let $\mathcal{B}_1$ and $\mathcal{B}_2$ be two equivalent clusters in $\Omega^{-1}([\mathbf{H}])$, $(V_i,W_i)$  a weak reducing pair in $\mathcal{B}_i$ for $i=1,2$, and $\mathbf{H}_i$ the GHS obtained by weak reduction along $(V_i,W_i)$ for $i=1,2$.

If  $\bar{F}_{V_2}:=\Thick(\mathbf{H}_2)\cap\V$ is not isotopic to $\bar{F}_{V_1}:=\Thick(\mathbf{H}_1)\cap\V$ in $\V$, then $\Phi(\mathcal{B}_1)=[\mathbf{H}_1]^\ast\neq[\mathbf{H}_2]^\ast=\Phi(\mathcal{B}_2)$ and therefore $\mathcal{B}_1\neq\mathcal{B}_2$.

Suppose  $\bar{F}_{V_2}$ is isotopic to $\bar{F}_{V_1}$ in $\V$.
Without changing the equivalent classes $[\mathbf{H}_1]^\ast$ and $[\mathbf{H}_2]^\ast$, we can assume $(V_i,W_i)$ is the center of $\mathcal{B}_i$ for $i=1,2$ by Lemma \ref{lemma-3-9-ii}.

We claim that $\mathcal{B}_1=\mathcal{B}_2$.
For the sake of contradiction, suppose $\mathcal{B}_1$ and $\mathcal{B}_2$ are different.
By the assumption, we can isotope $\bar{F}_{V_2}$ into $\bar{F}_{V_1}$ by an isotopy $h_t:\V\to\V$, $t\in[0,1]$  such that $h_1(\bar{F}_{V_2})=\bar{F}_{V_1}$.
Let $\V'$ be the closure of the component of $\V-\bar{F}_{V_1}$ intersecting $F$.
Then $\V'$ is a genus three compression body with at least one negative boundary component $\bar{F}_{V_1}$ whose genus is two by Corollary \ref{corollary-five-GHSs} and Lemma \ref{lemma-region} and $V_1$ and $h_1(V_2)$ are compressing disks in $\V'$, where $\partial_+\V=\partial_+\V'$.

If $\partial_-\V'$ is connected, then every separating compressing disk in $\V'$ cuts off a solid torus from $\V'$ and also does in $\V$.
Therefore, considering that each $V_i$ belongs to the center of $\mathcal{B}_i$ for $i=1,2$, Lemma \ref{lemma-center} forces $V_1$ and $V_2$ to be nonseparating disks in $\V$, i.e. $\partial V_1$ and $\partial V_2$ are nonseparating in $\partial_+\V=\partial_+\V'$.
This means $V_1$ and $h_1(V_2)$ are nonseparating compressing disks in $\V'$.
But there is a unique nonseparating disk in $\V'$ up to isotopy by Lemma \ref{lemma-2-19}, i.e. $h_1(V_2)$ is isotopic to $V_1$ in $\V'$.
Therefore, $h_1(V_2)$ is isotopic to $V_1$ in  $\V$ by an isotopy $g_t$ defined on $\V$ by Corollary \ref{corollary-isotopic-bd}.
Hence, the sequence of isotopies consisting of $h_t$ and $g_t$ gives an isotopy defined on $\V$ taking $V_2$ into $V_1$.
But this means the centers of $\mathcal{B}_1$ and $\mathcal{B}_2$ intersect each other, violating the assumption that each component of $\DVW(F)$ is an equivalent cluster itself.
If $\partial_-\V'$ is disconnected, then $V_1$ and $h_1(V_2)$ are isotopic  in $\V'$ by Lemma \ref{lemma-2-20}.
Hence, we also get a contradiction similarly.

Therefore, $\mathcal{B}_1=\mathcal{B}_2$, i.e. $\Phi(\mathcal{B}_1)=[\mathbf{H}_1]^\ast=[\mathbf{H}_2]^\ast=\Phi(\mathcal{B}_2)$ and therefore $\Thick(\mathbf{H}_2)\cap\W$ is also isotopic to $\Thick(\mathbf{H}_1)\cap\W$ in $\W$.

Hence, we only need to count the number of ways to embed the thick level contained in $\V$ into $\V$ in order to count the number of elements of $\Omega^{-1}([\mathbf{H}])$.
 
This completes the proof.
\end{proof}


\begin{thebibliography}{0}

\bibitem{Bachman2002} D. Bachman, Critical Heegaard surfaces, {\it Trans. Amer. Math. Soc}. {\bf 354} (2002), 4015--4042.

\bibitem{Bachman2008} D. Bachman, Connected sums of unstabilized Heegaard splittings are unstabilized, 
{\it Geom. Topol}. {\bf 12} (2008), 2327--2378.

\bibitem{Bachman2010} D. Bachman, Topological index theory for surfaces in 3--manifolds, {\it Geom. Topol}. {\bf 14} (2010), 585--609.

\bibitem{Bachman2012-1} D. Bachman, Normalizing Topologically Minimal Surfaces I: Global to Local Index, arXiv:1210.4573.

\bibitem{Bachman2012-2} D. Bachman, Normalizing Topologically Minimal Surfaces II: Disks,  arXiv:1210.4574.

\bibitem{Bachman2013-1} D. Bachman, Normalizing Topologically Minimal Surfaces III: Bounded Combinatorics,  arXiv:1303.6643.

\bibitem{IdoJangKobayashi2014} A. Ido, Y. Jang and T. Kobayashi, Heegaard splittings of distance exactly $n$, {\it Algebr. Geom. Topol}. {\bf 14} (2014), 1395--1411.

\bibitem{JungsooKim2013} J. Kim, On critical Heegaard splittings of tunnel number two composite knot exteriors, {\it J. Knot Theory Ramifications} \textbf{22} (2013), 1350065, 11 pp.

\bibitem{JungsooKim2012} J. Kim, On unstabilized genus three critical Heegaard surfaces, {\it Topology Appl}.  \textbf{165} (2014), 98--109.

\bibitem{JungsooKim2014} J. Kim, A topologically minimal, weakly reducible, unstabilized Heegaard splitting of genus three is critical, {\it Algebr. Geom. Topol}. \textbf{16} (2016) 1427--1451.

\bibitem{7} M. Lustig and Y. Moriah, Closed incompressible surfaces in complements of wide knots and links, 
{\it Topology Appl}. {\bf 92} (1999), 1--13.

\bibitem{MS2013} H. Masur and S. Schleimer, The geometry of the disk complex, {\it J. Amer. Math. Soc}. {\bf 26} (2013), 1--62. 

\bibitem{8} D. McCullough, Virtually geometrically finite mapping class groups of $3$-manifolds, 
{\it J. Differential Geom}. {\bf 33} (1991) 1--65.

\bibitem{Morimoto2015} K. Morimoto, On Heegaard splittings of knot exteriors with tunnel number degenerations, {\it Topology Appl}. {\bf 196} (2015), 719--728.

\bibitem{Gelca2014} R. Gelca, {\it Theta functions and knots}, World Scientific Publishing Co. Pte. Ltd., Hackensack, NJ, (2014). 


\bibitem{SaitoScharlemannSchultens2005} T. Saito, M. Scharlemann and J. Schultens, Lecture notes on generalized Heegaard splittings, arXiv:math/0504167v1.

\bibitem{ScharlemannThompson1993} M. Scharlemann and A. Thompson, Heegaard splittings of $(\text{surface})\times I$ are standard, {\it Math. Ann}. \textbf{295} (1993), 549--564.

\bibitem{ScharlemannThompson1994} M. Scharlemann and A. Thompson, Thin position for $3$-manifolds, \textit{AMS Contemp. Math.} {\bf 164} (1994), 231--238.

\end{thebibliography}
\end{document}